\documentclass{amsart}
\usepackage{mathrsfs,dsfont,textcomp}
\usepackage[all]{xy}

\usepackage{color}

\newtheorem{theorem}{Theorem}[section]
\newtheorem{lemma}[theorem]{Lemma}
\newtheorem{proposition}[theorem]{Proposition}
\newtheorem{corollary}[theorem]{Corollary}
\newtheorem{propdef}[theorem]{Proposition-Definition}

\theoremstyle{definition}
\newtheorem{definition}[theorem]{Definition}
\newtheorem{example}[theorem]{Example}

\theoremstyle{remark} 
\newtheorem{remark}[theorem]{Remark}

\numberwithin{equation}{section}

\begin{document}

\title[Moving planes, Jacobi curves and Finsler geometry] {Moving planes, Jacobi curves and the dynamical approach to Finsler geometry}

\author[C. Dur\'an]{Carlos Dur\'an}
\address{Departamento de Matem\'atica, \hfill\break\indent
Universidade Federal de Paran\'a, \hfill\break\indent
Setor de Ci\^encias Exatas, Centro Polit\'ecnico, 
 Caixa Postal 019081,  CEP 81531-990, \hfill\break\indent
Curitiba, Paran\'a, Brazil}
\email{cduran@ufpr.br}

\author[H. Vit\'orio]{Henrique Vit\'orio}
\address{Departamento de Matem\'atica, \hfill\break\indent
Universidade Federal de Pernambuco, \hfill\break\indent
Cidade Universit\'aria, \hfill\break\indent
Recife, Pernambuco, Brazil}
\email{henriquevitorio@dmat.ufpe.br}




\thanks{This work was supported by {\rm CNPq}, grant {\rm No.} 232664/2014-5}


\subjclass[2010]{Primary  53C60, 53C22}

\keywords{}

\begin{abstract}
We express invariants of Finsler manifolds in a geometrical way by means  of using {\em moving planes} and their associated {\em Jacobi curves},
which are curves in a fixed homogeneous Grassmann manifold. Some applications are given.

\end{abstract}

\maketitle

\section{Introduction}

A common way of writing computations in Finsler geometry is through some extension of the Levi-Civita calculus of Riemannian geometry. However, since there cannot be a 
Levi-Civita connection in Finsler geometry (for reasonable notions of connection with metric comptibility and torsion freeness, if a Finsler manifold admits such a connection it is actually Riemannian), there is a plethora of connections 
(Berwald, Cartan, Chern and Rund, \dots) where each one of them is defined by {\em partial} compatibilities and torsion freeness. 
While this connection formalism has led to important developments, there are contexts where a different point of view can shed new light.
\par An alternative approach, of a more dynamical flavor, to the geometry of sprays and Finsler metrics consists in regarding the local differential invariants of
sprays and Finsler metrics as local invariants (under the action of the appropriate group of diffeomorphisms) of the following type of geometric data on a manifold:

\begin{definition}\label{definitionmovingplane}
A {\it moving plane} on a smooth manifold $X$ is a triplet $\mathscr{P}=(\Delta_r,\Delta_k,\Phi_t)$, where 
\begin{enumerate}
\item $\Delta_k\subset\Delta_r$ are distributions on $X$ with dimensions $k$ and $r$, respectively. 
\item $\Phi_t$ is a flow in $X$ which leaves $\Delta_r$ invariant.
\end{enumerate}
\end{definition}
For instance, the prototypical examples that motivated this paper are the cases where (we refer to $ \S$ \ref{ssectionSpraysFinsler} for precise definitions)
\begin{enumerate}
\item  $X$ is the tangent bundle without the zero section $TM\backslash 0$ of a manifold $M^n$, $\Delta_{2n}$ is the full tangent distribution, $\Delta_n$ is the
vertical distribution $\mathcal{V}TM$, and $\Phi_t$ is the flow corresponding to a spray $S$ on $M$.
\item $X$ is the unit co-sphere bundle $\Sigma^*_FM$ of a Finsler manifold $(M^n,F)$, $\Delta_{2n-2}$ is the canonical contact distribution on $\Sigma^*_FM$, 
$\Delta_{n-1}$ is the vertical distribution $\mathcal{V}\Sigma^*_FM$ and $\Phi_t$ is the restriction to $\Sigma^*_FM$ of the co-geodesic flow of $F$.
\end{enumerate}
This approach is implicit in the pioneering works of Grifone \cite{Grifone} and Foulon \cite{Foulon} where, for instance, the classical notions of Ehresman connection and
curvature endomorphism from the theory of second order differential equations and Finsler metrics, are recovered  by considering the so-called almost tangent structure
(in the case \cite{Grifone}) and the vertical endomorphism (in the case \cite{Foulon}) and their successive Lie derivatives along the geodesic vector field.
\par 
Back to the moving plane setting, the infinitesimal action of the flow $\Phi_t$ on the distribution $\Delta_k$ gives rise, for each $x\in X$, to a curve
\begin{equation}\label{jacobicurvedefinition}
\ell_x(t)  =  ({\Phi_t }^*\Delta_k)(x)  =    {\rm d}\Phi_{-t}(\Phi_t(x))\Delta_k(\Phi_t(x))
\end{equation}
of $k$-dimensional subspaces of the fixed vector space $\Delta_r(x)$; that is, $\ell_x(t)$ is a curve on the Grassmannian manifold ${\rm Gr}_k(\Delta_r(x))$, called the
{\it Jacobi curve} of $\mathscr{P}$ based at $x$. In the above
examples, the Jacobi curves live on half-Grassmannians ${\rm Gr}_n(\mathds{R}^{2n})$ and on Lagrangian Grassmannians $\Lambda(\mathds{R}^{2n})$, respectively.
It is well-known that the topology of curves of Lagrangian subspaces successfully describes conjugacy of geodesics via the Maslov index theory \cite{PT}. As we will show here,
the {\em local} geometry of Jacobi curves also describes  relevant local invariants of sprays and Finsler metrics, in particular the invariants related to variational phenomena; by this we mean, for example, the Jacobi endomorphism 
$Y\mapsto 
R(Y,T)T$
which appears in the Jacobi equation and leads to the definition of flag curvature.
\par
To the best of our knowledge, this was first noticed by Adhout \cite{Ah} in the case of 
Riemannian geodesic flows; there, by identifying an important generic property of curves of Lagrangian subspaces (the {\it fanning} property, later extended to curves
on ${\rm Gr}_n(\mathds{R}^{2n})$ in \cite{AD1} ), the author uncovers the local invariants as linear symplectic invariants of the Jacobi curve. On the other hand, the geometry of curves on
${\rm Gr}_n(\mathds{R}^{2n})$ and $\Lambda(\mathds{R}^{2n})$, under the action of the general linear and symplectic groups, is a beautiful subject in itself. As has been shown
in \cite{AD1}, the behaviour of the class of fanning curves can be completely described, in the spirit of Cartan-Klein, by a set of linear invariants. As we shall show here, the formalism of
\cite{AD1} applied to the Jacobi curves of the above examples gives us the desired local invariants. This gives a unified treatment of the approaches of Grifone, Foulon and Adhout, and 
can be viewed as a Cartan-Klein geometrization of them. This point of view leads to some applications to Finsler geometry that we now describe:

 \subsection*{An O'Neill formula for Finsler submersions.} A fundamental tool in the study of curvature properties of Riemannian manifolds is the {\em O'Neill tensors} 
and associated {\em O'Neill formulas} \cite{oneill}, which relate curvatures of the total space 
and the base of Riemannian submersions; see for example \cite{gromoll-walschap} for a description of its use in the study of non-negative curvature. We give an O'Neill formula for Finsler manifolds expressed in terms of invariants of the Jacobi curve. As is common in Finsler geometry, the results are interesting even for Riemannian manifolds: the standard proof and applications of O'Neill formulas are given as algebraic manipulations of the Levi-Civita connection, whereas the Jacobi curve gives an O'Neill formula as a quantification of the relationship, as a symplectic reduction, of the geodesic flows of the total space and the base \cite{AD2}. In addition to curvature bounds applications, the fine details 
of the O'Neill tensor allows the consideration of rigidity results of special submersions \cite{duran-speranca,gg} and the original rigidity results of O'Neill
(theorem 4 of \cite{oneill}),  which would be quite interesting to generalize to the Finslerian setting.

\subsection*{A characterization of the sign of flag curvature.} An important area or Riemannian geometry is the construction of examples of manifolds with sign properties 
of the sectional curvature, for example manifolds of positive (resp. negative) sectional  curvature and their associated relaxed conditions non-negative (resp. non-positive), see e.g. \cite{ziller-examples}. This interest has spread to Finsler manifolds \cite{rademacher}, and the study of examples  has begun with the homogeneous case (see \cite{XD} for a survey). We give a dynamical characterization of the sign of flag curvature in terms of the Jacobi curve, or, more precisely, in terms of the {\em horizontal curve}, which is another curve in the (Lagrangian) Grassmannian  canonically produced from the Jacobi curve.

\subsection*{The flag curvature of a class  of projectively related Finsler metrics.} One area where Finsler geometry is completely different from Riemannian geometry is {\em inverse problems}, where in the Finsler case there is typically a 
rich moduli space (specially in the non-symmetric case), whereas there is rigidity in the Riemannian case, for example, 
in Hilbert's Fourth Problem \cite{JC} and projectively flat metrics of constant curvature \cite{bryant-flat-constant}. In this spirit, two Finsler metrics are {\em projectively related} if they share the same geodesics up to reparametrization. An important transformation that does not change the projective class of a metric is the addition of a closed 1-form. We describe how the Jacobi curve furnishes a formula relating the flag curvature of a metric with that of its deformation by a closed 1-form.

\subsection*{The flag curvature of Katok perturbations.}
In 1973 A. Katok constructed examples of a non-symmetric Finsler metric on the sphere $S^2$ with 
only two prime closed geodesics; the geometry of these metrics has been nicely described in \cite{Ziller} and a standard Finsler description is given in
\cite{shen2} . It is well-known that these metrics have constant 
curvature (see, e.g. Foulon \cite{FR} or $\S$11 of Rademacher \cite{rademacher}). We  present a proof of this property, due to J.C. \'Alvarez, that proceeds  
by showing that the Jacobi curves of the original metric and of the Katok-perturbed one are equivalent under a linear-symplectic transformation, thus having the same invariants.
\begin{remark}
The local geometry of the Jacobi curve has also been intensively studied with motivation coming from Control Theory and Sub-Riemannian geometry; see \cite{ABR} and the references therein for a contemporary account, and the appendix of \cite{AD1} for comparison of the approaches to the invariants. In particular, in \cite{ACZ} , there is a reduction procedure similar to the one we use for giving the Finslerian version of the O'Neill tensor and associated formula.
\end{remark}

\begin{remark} Moving planes and their Jacobi curves in half-Grassmannians are specially adapted to Finsler geometry; however this concept can be
generalized and applied to other situations: one can consider for example  a whole linear flag of distributions $\Delta_{k_1}\subset\Delta_{k_2} \subset \dots \subset\Delta_r$, and its associated Jacobi curve in a fixed  flag manifold. This situation appears in
the study of higher order variational problems, where the $\Delta_{k_i}$ are kernels 
of the derivative of the projections of the jet spaces of curves 
$\pi: J^s(\mathbb{R}, M) \to J^r(\mathbb{R}, M)$ for adequate $s>r$. See \cite{crampin-sarlet-cantrijn, leon-rodriguez, duran-otero, duran-peixoto}. 
\end{remark}

After this introduction, the paper is organized as follows: we give some preliminaries in $\S$\ref{sectionprelims}  in order to fix language and make the paper reasonably self-contained. In $\S$\ref{sectionmovingplanes}
 we establish how curvature invariants are expressed in terms of moving planes and their associated Jacobi fields, by relating these invariants with those obtained by the dynamic method and Finsler connections; in particular, we recover the flag curvature in Theorem 
 \ref{theoremjacobiflag}. The rest of the sections of  the paper correspond to each of the aforementioned applications. 
\medskip 

\noindent{\bf Acknowledgments.}
We thank Juan Carlos Alvarez Paiva for his participation in the early stages of the research presented in section \ref{sectionmovingplanes} and for allowing us to present his unpublished proof of curvature invariance of Katok deformations; without this and his gentle prodding this paper would not exist. The second author would like to thank the support provided by the 
Mathematischen Instituts der Universit\"at Leipzig, where part of this work was done, and the financial support provided by the Brazilian program Science Without Borders, Grant No. 232664/2014-5.

\section{Preliminaries} \label{sectionprelims}

The two ends that this paper aims to connect are, on one side, the global invariants of Finsler manifolds, and on the other side, the invariants of curves in a fixed Grassmann manifold viewed as a homogenous space. Sections \ref{ssectionSpraysFinsler} 
and \ref{fanningcurvessection} correspond respectively to the necessary preliminaries of each side.

\subsection{Sprays and Finsler manifolds} \label{ssectionSpraysFinsler}

\subsubsection{Notations and the structure of the tangent bundle} \label{sssectionStructureTangent}

We shall denote by $TM\backslash 0$ the tangent bundle of a manifold with the null section removed, and by $\pi$ and $\rho$
the projection maps $\pi:TM\backslash 0\rightarrow M$, $\rho:T(TM\backslash 0)\rightarrow TM\backslash 0$. The latter contains as
a vector subbundle the {\it vertical tangent bundle} 
\begin{equation}\label{verticalbundle}
\rho:\mathcal{V}TM\rightarrow TM\backslash 0, 
\end{equation}
whose fibers are the tangent spaces of
the fibers of $\pi$. We shall call {\it vertical vector fields} on $TM\backslash 0$ the sections of (\ref{verticalbundle}). 
The {\it vertical lift} at a given $w\in TM\backslash 0$ is the tautological isomorphism 
\begin{equation}\label{tautologicalisomorphism}
i_w:T_{\pi(w)}M\rightarrow\mathcal{V}_w TM~,~~ i_w(u)=({\rm d}/{\rm d}t)|_{t=0}(w+t\cdot u),
\end{equation}
where the name of these isomorphisms stems from the fact that 
$i_w$ furnishes canonical lifts of a vector fields $U$ on $M$ to vertical fields 
$U^\mathfrak{v}$on $TM\backslash 0$; the same procedure  also gives vertical lifts of vector fields defined along curves in $M$. The {\it canonical vector field}
$C$ on $TM\backslash 0$ is defined by $C_w=i_w(w)$.
\par We remark that analogous constructions apply to the punctured co-tangent bundle $\tau:T^*M\backslash 0\rightarrow M$:
a vertical distribution $\mathcal{V}T^*M$ on $T^*M\backslash 0$, tautological isomorphisms $i_\xi:T^*_{\tau(\xi)}M\rightarrow\mathcal{V}_\xi T^*M$, and the canonical vector field $C^*$
on $T^*M\backslash 0$ are defined
as before.
\par With this tool in hand, we can define 

\begin{definition}
The {\it almost-tangent structure} of $TM\backslash 0$ is the section $\mathcal{J}$ 
of \newline ${\rm End}(T(TM\backslash 0))\rightarrow TM\backslash 0$ defined
by
\begin{equation}\nonumber
\mathcal{J}(X)=i_w({\rm d}\pi(w)X),~~~\mbox{for $w=\rho(X)$}.
\end{equation}
\end{definition}

Observe that $\mathcal{J}$ has both kernel and image equal to $\mathcal{V}TM$.

\begin{definition}
A {\it second order differential equation} (SODE) on $M$ is a smooth vector field $S$ on $TM\backslash 0$ such that $\mathcal{J}(S)=C$. This means that
the integral curves of $S$ are of the form $t\mapsto\dot{\gamma}(t)$, for some class of curves $\{\gamma\}$ in $M$.
If furthermore $[C,S]=S$, then $S$ is called a {\it spray}, in which case the curves $\{\gamma\}$ are the geodesics of $S$.
\end{definition}

In {\it natural local coordinates} $(x,y)$ for $TM\backslash 0$, i.e. $(x,y)$ are induced from local coordinates $x$ for $M$, a SODE assumes the form
\begin{equation}\label{SODE}
S = \sum_i y^i\frac{\partial}{\partial x_i}-2\sum_iG_i(x,y)\frac{\partial}{\partial y^i}, 
\end{equation}
for certain smooth functions $G_i$ that are positively homogeneous of degree 2 in $y$ if, and only if, $S$ is a spray. The following basic 
property (cf. \cite[Prop. I.7]{Grifone}) will be essential later. For the sake of completeness we have included a proof.

\begin{lemma}\label{lemmaalmosttangent}
If $S$ is a {\rm SODE} on $M$ and $X$ is a vertical vector field on $TM\backslash 0$, then $\mathcal{J}([X,S])=X$. 
\end{lemma}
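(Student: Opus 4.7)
The plan is to verify this locally using the natural coordinates $(x,y)$ on $TM\backslash 0$, where the computation becomes transparent. In these coordinates, $\mathcal{J}(\partial/\partial x^i) = \partial/\partial y^i$ and $\mathcal{J}(\partial/\partial y^i) = 0$, directly from the definition of $\mathcal{J}$ (since $\mathrm{d}\pi$ kills $\partial/\partial y^i$, fixes $\partial/\partial x^i$ on the base, and the vertical lift $i_w$ sends $\partial/\partial x^i|_{\pi(w)}$ to $\partial/\partial y^i|_w$).

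First, write the SODE in the form \eqref{SODE}, and the arbitrary vertical field as $X = \sum_i X^i(x,y)\, \partial/\partial y^i$. Then compute the Lie bracket $[X,S]$ term by term. The bracket of $X$ with the purely vertical part $-2\sum_i G_i\,\partial/\partial y^i$ is again vertical, so it contributes nothing after applying $\mathcal{J}$. The only relevant piece is the bracket with the ``horizontal'' summand $\sum_i y^i\,\partial/\partial x^i$. A direct calculation gives
\begin{equation*}
\left[\sum_j X^j \frac{\partial}{\partial y^j},\, \sum_i y^i \frac{\partial}{\partial x^i}\right]
= \sum_i X^i \frac{\partial}{\partial x^i} - \sum_{i,j} y^i \frac{\partial X^j}{\partial x^i}\frac{\partial}{\partial y^j},
\end{equation*}
whose second term is again vertical.

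Therefore $[X,S] = \sum_i X^i\,\partial/\partial x^i + (\text{vertical})$, and applying $\mathcal{J}$ yields $\sum_i X^i\,\partial/\partial y^i = X$, as claimed.

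There is essentially no obstacle here, since everything reduces to the observation that the only way to produce a non-vertical component in $[X,S]$ is through the coefficient $y^i$ in front of $\partial/\partial x^i$, and the $\partial/\partial y^j$-derivative of $y^i$ is $\delta_i^j$. A coordinate-free proof is also available by noting that $\mathcal{J}(S) = C$ and that $\mathcal{J}$ has kernel and image both equal to $\mathcal{V}TM$, but the coordinate proof above is short, self-contained, and fits the level of detail the paper is using.
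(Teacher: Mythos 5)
Your proof is correct and follows essentially the same route as the paper: a verification in natural coordinates $(x,y)$ using the local form \eqref{SODE} of the SODE. The only cosmetic difference is that the paper first observes that $X\mapsto\mathcal{J}([X,S])-X$ is $C^\infty$-linear and then checks it on the frame $\partial/\partial y^i$, whereas you carry the arbitrary coefficients $X^i$ through the bracket computation directly; both come to the same thing.
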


\begin{proof}
Since $\mathcal{J}$ vanishes on vertical vectors, $\mathcal{J}([X,S])-X$ is $C^\infty(TM\backslash 0)$-linear in the sections $X$ of (\ref{verticalbundle}).
Relatively to natural local coordinates $(x,y)$, $\mathcal{J}(\partial/\partial y_i)=0$, $\mathcal{J}(\partial/\partial x_i)=\partial/\partial y_i$, and, from
(\ref{SODE}), $[\partial/\partial y_i,S]=\partial/\partial x_i-2\sum_j(\partial G_j/\partial y_i)\partial/\partial y_j$. Therefore, 
$\mathcal{J}([\partial/\partial y_i,S])-\partial/\partial y_i=0$.
\end{proof}

\subsubsection{Finsler manifolds}

\begin{definition}
A Finsler metric on a smooth manifold $M$ is a function $F:TM\rightarrow[0,\infty)$ that is smooth on $TM\backslash 0$ and that restricts to a
Minkowski norm $F_m$ on each tangent space $T_mM$. This means that
\begin{itemize}
\item[$(i)$] $F(v)=0$ if, and only if $v=0$;
\item[$(ii)$] $F(\lambda v)=\lambda F(v)$, if $\lambda\geq 0$;
\item[$(iii)$] For every $v\in TM\backslash 0$, the second fiber-derivative of $(1/2)F^2$ at $v$,
\begin{equation}\label{fundamentaltensor}
g_F(v) := (1/2){\rm d}^2_fF^2(v),
\end{equation}
is a positive-definite inner product on $T_{\pi(v)}M$.
\end{itemize}
The inner product (\ref{fundamentaltensor}) is usually referred to as the {\it fundamental tensor} of $F$ at a $v$. 
\end{definition}

An important concept attached to a Finsler metric is the notion of {\it dual}.

\begin{definition}
The dual of a Finsler metric $F$ on $M$ is the function $F^*:T^*M\rightarrow\mathds{R}$ obtained by fiberwise taking the dual of the Minkowski norms
$F_m$, that is,
\begin{equation}\nonumber
(F^*)_m(\xi) = \sup_{F_m(v)=1}\xi(v). 
\end{equation}
\end{definition}

Alternatively, the dual $F^*$ of $F$ is obtained by composing $F$ with the inverse of its {\it Legendre transformation}. The latter is the diffeomorphism
$\mathscr{L}_F:TM\backslash 0\rightarrow T^*M\backslash 0$, 
\begin{equation}\label{legendretransf}
\mathscr{L}_F(v) = (1/2){\rm d}_fF^2(v) = g_F(v)(v,\hskip 1pt\cdot\hskip 1pt). 
\end{equation}
We remark that, from the homogeneity, the fiber derivative of $\mathscr{L}_F$ is given by
\begin{equation}\label{fiberderivative}
{\rm d}_f\mathscr{L}_F(v)w = g_F(v)(w,\hskip 1pt\cdot\hskip 1pt). 
\end{equation}

\subsubsection{The Hamiltonian point of view}.

\vskip 6pt

\noindent{\it The co-tangent bundle setting.}
Let us begin by recalling
\begin{definition}
The {\it canonical 1-form} of $T^*M$ is the 1-form $\alpha$ on $T^*M$ defined by
\begin{equation}\label{alpha}
\alpha_\xi(X)=\xi({\rm d}\tau(\xi)X).
\end{equation}
The 2-form $\omega=-{\rm d}\alpha$ defines the so-called
{\it canonical symplectic structure}
of $T^*M$.
\end{definition}
\noindent We remark that one can recover $\alpha$ from $\omega$ and the tautological vector field $C^*$ via
\begin{equation}\label{alphaomega}
\alpha = -i_{C^*}\omega. 
\end{equation}

Given a Finsler metric $F$, let us consider the Hamiltonian function
\begin{equation}\label{hamiltoniandual}
(1/2)(F^*)^2 : T^*M\backslash 0\rightarrow\mathds{R} 
\end{equation}
on the symplectic manifold $(T^*M\backslash 0,\omega)$.

\begin{definition}
We shall call {\it co-geodesic vector field} of $F$, and denote by $S^*_F$ (or simply $S^*$), the Hamiltonian vector field of (\ref{hamiltoniandual}); that is,
$S^*_F$ is the vector field on $T^*M\backslash 0$ defined
by
\begin{equation}\nonumber
(1/2){\rm d}(F^*)^2 = \omega(S^*_F,\hskip 1pt\cdot\hskip 1pt). 
\end{equation}
The corresponding flow $\Phi_t^{S_F^*}$ is the {\it co-geodesic flow} of $F$. Observe that since (\ref{hamiltoniandual}) is positively homogeneous of degree 2, then 
$[C^*,S_F^*]=S_F^*$.
\end{definition}

Being the Hamiltonian flow of (\ref{hamiltoniandual}), the co-geodesic flow preserves $\omega$ and leaves invariant every level set of $F^*$. In particular, it restricts to
a flow on the {\it unit co-sphere bundle} 
\begin{equation}\label{cospherebundle}
\Sigma^*_FM=(F^*)^{-1}(1).
\end{equation}
We shall still let $S_F^*$ and $\Phi_t^{S_F^*}$ denote their restrictions to (\ref{cospherebundle}).
In the following, $\alpha$ and $\omega$ will mean their pull-backs to (\ref{cospherebundle}).
The contact geometry of $F$ is described by

\begin{proposition}
The 1-form $\alpha$ is a contact form on $(\ref{cospherebundle})$; this means that $\omega$ is non-degenerate (hence, induces a symplectic structure) on the so-called contact distribution
${\rm ker}(\alpha)$. Furthermore, the vector field $S_F^*$ is the Reeb vector field of $(\Sigma_F^*M,\alpha)$, that is,
it is the unique vector field such that
\begin{equation}\nonumber
i_{S_F^*}\omega = 0,~~~\alpha(S_F^*) = 1. 
\end{equation}
\end{proposition}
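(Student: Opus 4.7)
The plan is to prove the two properties of $S_F^*$ first and then deduce the contact property from them by a linear-algebraic argument on the $(2n-1)$-dimensional hypersurface $\Sigma_F^*M \subset T^*M\backslash 0$.

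First I would verify $\alpha(S_F^*)=1$. Using identity (\ref{alphaomega}) and the defining relation $(1/2)d(F^*)^2 = \omega(S_F^*,\cdot)$, one computes
\begin{equation}\nonumber
\alpha(S_F^*) = -\omega(C^*,S_F^*) = \omega(S_F^*,C^*) = (1/2)\,d(F^*)^2(C^*).
\end{equation}
Since $(F^*)^2$ is positively homogeneous of degree $2$ along the fibers of $T^*M\backslash 0$ and $C^*$ is the infinitesimal generator of fiber dilations, Euler's identity gives $C^*\!\cdot\!(F^*)^2 = 2(F^*)^2$, which equals $2$ on $\Sigma_F^*M$. Thus $\alpha(S_F^*)=1$ and, as a byproduct, $S_F^*$ is tangent to $\Sigma_F^*M$ (since $d(F^*)^2(S_F^*) = 2\omega(S_F^*,S_F^*) = 0$, alternatively because $S_F^*$ is the Hamiltonian field of $(1/2)(F^*)^2$).

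Next I would check $i_{S_F^*}\omega = 0$ on $\Sigma_F^*M$. For any $X \in T_\xi\Sigma_F^*M$ one has $d(F^*)^2(X)=0$, whence by the definition of $S_F^*$,
\begin{equation}\nonumber
\omega(S_F^*,X) = (1/2)d(F^*)^2(X) = 0.
\end{equation}
Hence the pullback of $\omega$ to $\Sigma_F^*M$ has $S_F^*$ in its kernel.

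Finally I would argue for non-degeneracy of $\omega$ on $\ker(\alpha)$. Since $\omega$ is symplectic on $T^*M\backslash 0$ and $\Sigma_F^*M$ has codimension one, the pullback $\omega|_{\Sigma_F^*M}$ has one-dimensional radical; by the previous step this radical is exactly $\mathbb{R}\cdot S_F^*$. Now $\alpha(S_F^*)=1\neq 0$ implies $S_F^* \notin \ker(\alpha)$, so the decomposition $T\Sigma_F^*M = \ker(\alpha) \oplus \mathbb{R}\cdot S_F^*$ holds fiberwise, and the restriction of $\omega$ to $\ker(\alpha)$ has trivial radical, i.e., is non-degenerate. The uniqueness of $S_F^*$ as Reeb field follows from the standard observation that the two Reeb conditions prescribe a unique vector field once the contact form is fixed. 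The only subtlety is the dimension-count argument showing that $\ker(\omega|_{\Sigma_F^*M})$ is exactly one-dimensional; this is a general symplectic fact about hypersurfaces and not a Finsler issue, so I do not expect any real obstacle.
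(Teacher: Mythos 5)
Your argument is correct: the computation $\alpha(S_F^*)=-\omega(C^*,S_F^*)=(1/2){\rm d}(F^*)^2(C^*)=1$ via Euler's identity, the observation that tangent vectors to the level set are annihilated by $i_{S_F^*}\omega$, and the standard one-dimensional-radical count for a hypersurface in a symplectic manifold together establish exactly the stated properties, and the uniqueness of the Reeb field is handled correctly. The paper itself gives no proof of this proposition, treating it as standard; your proof is precisely the argument implicit in the paper's abstract setting (Remark \ref{remarkcontacttype}), where the Liouville field $C^*$, the homogeneity $[C^*,S_F^*]=S_F^*$, and the characteristic distribution of the contact-type hypersurface play the same roles as in your three steps.
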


\begin{remark}\label{remarkcontacttype}
For future reference, we remark that $T^*M$, $C^*$, $\alpha$, $\omega$, $S^*_F$, $\Sigma_F^*M$ fit in the following abstract setting.
Let $(X,\omega, S)$ be a symplectic manifold endowed with a vector field $S$ generating a symplectic flow $\Phi_t^S$, and let $\Sigma\subset X$ be a
$\Phi_t^S$-invariant hypersurface such that
\begin{enumerate}
\item $\Sigma$ is of {\it contact type} with respect to a {\it Liouville vector field} $C$; this means that (cf.\cite{dusamcduff}) $C$ is
a vector field defined in a neighborhood of $\Sigma$ that is everywhere transverse to $\Sigma$ and such that $[C,\omega]=\omega$. 
\item $S$ generates the {\it charecteristic distribution} of $\Sigma$, i.e. $T\Sigma={\rm ker}(i_S\omega|_\Sigma)$.
\item $S$ satisfies the homogeneity $[C,S]=S$.
\end{enumerate}
In this setting, 
$\alpha:=-i_C\omega$ pulls back to a contact form on $\Sigma$, still denoted by $\alpha$. Moreover, $\Phi_t^S$ restricts to
an exact contact flow on $(\Sigma,\alpha)$, i.e. $(\Phi_t^S)^*\alpha=\alpha$ for all $t$, and $-{\rm d}\alpha$ and $\omega$ restrict to the same
symplectic structure on the contact distribution ${\rm ker}(\alpha)$.
\end{remark}

\noindent{\it The tangent bundle setting.}
We shall let $\alpha_F$ and $\omega_F$ be the pull-backs of $\alpha$ and $\omega$ by the Legendre transformation $\mathscr{L}_F$. Observe that, from (\ref{legendretransf}) and
(\ref{alpha}),
\begin{equation}\nonumber
(\alpha_F)_v(X) = g_F(v)(v,{\rm d}\pi(v)X). 
\end{equation}
The pull-back of $S_F^*$ by $\mathscr{L}_F$ is a spray on $M$, the so-called {\it geodesic spray} $S_F$ of $F$, and the corresponding flow $\Phi_t^{S_F}$ is the 
{\it geodesic flow} of $F$. It follows that $\Phi_t^{S_F}$ preserves $\omega_F$ and
\begin{equation}\nonumber
\mathscr{L}_F\circ\Phi_t^{S_F} = \Phi_t^{S_F^*}\circ\mathscr{L}_F. 
\end{equation}
As in the co-tangent case, $\alpha_F$ pulls-back to a contact form, still denoted by $\alpha_F$, on the {\it unit sphere bundle} $\Sigma_FM=F^{-1}(1)$, and $S_F$ restricts
to the Reeb field of $(\Sigma_FM,\alpha_F)$, still denoted by $S_F$. The Legendre transformation $\mathscr{L}_F$ relates both contact geometries.

\subsection{The geometry of fanning curves}\label{fanningcurvessection}

In this section we summarize the invariants of curves in the half-Grassmannians and Lagrangian Grassmannians  
constructed in \cite{AD1}.

\subsubsection{Fanning curves on ${\rm Gr}_n(V)$}\label{fanningcurvessubsection}

Let $V$ be a $2n$-dimensional real vector space. A smooth curve $\ell(t)$ on the Grassmannian manifold ${\rm Gr}_n(V)$ of $n$-dimensional subspaces of $V$ is {\it fanning} if, 
upon identifying 
the tangent spaces $T_\ell{\rm Gr}_n(V)$ with the spaces of linear maps from $\ell$ to $V/\ell$, each velocity vector $\dot{\ell}(t)$ is
an invertible linear map; this is a non-degeneracy condition satisfied by an open and dense set of smooth curves. The set of fanning curves is acted upon by the general linear group ${\rm GL}(V)$ and it turns out that, with respect to the 
prolonged action of ${\rm GL}(V)$ on the space ${\rm J}^1_f(\mathds{R};{\rm Gr}_n(V))$
of one-jets of fanning curves on ${\rm Gr}_n(V)$ and the adjoint action of ${\rm GL}(V)$ on $\mathfrak{gl}(V)$, all the equivariant maps 
\begin{equation}\nonumber
{\rm J}^1_f(\mathds{R};{\rm Gr}_n(V))\rightarrow\mathfrak{gl}(V)
\end{equation} 
are of the form $a{\bf I}+b{\bf F}$, $a,b\in\mathds{R}$, where {\bf I} is the identity of $V$ and the {\it fundamental endomorphism} {\bf F} can be 
described in terms of frames as follows.

\subsubsection{Frames and the Fundamental endomorphism}

If $\mathcal{A}(t)=\bigl(a_1(t),\cdots,a_n(t)\bigr)$ is a {\it frame} for $\ell(t)$, i.e. $a_1(t),\cdots,a_n(t)$ are smooth
curves on $V$ spanning $\ell(t)$, then the condition of being fanning is equivalent to requiring that 
\begin{equation}\nonumber
\bigl(\mathcal{A}(t),\dot{\mathcal{A}}(t)\bigr)=\bigl(a_1(t),\cdots,a_n(t),\dot{a}_1(t),\cdots,\dot{a}_n(t)\bigr)
\end{equation}
\noindent be a frame for $V$. In general, we shall call a smooth curve $a(t)$ on $V$ satisfying $a(t)\in\ell(t)$ for all $t$ a {\it section} of $\ell(t)$.
The following definition 
does not depend on the choice of frame for $\ell(t)$.

\begin{definition}
The {\it fundamental endomorphism} of the fanning curve $\ell(t)$ is the curve ${\bf F}(t)\in{\rm End}(V)$ defined in the basis $\bigl(a_1(t),\cdots,a_n(t),\dot{a}_1(t),\cdots,\dot{a}_n(t)\bigr)$ by
\begin{equation}\nonumber
{\bf F}(t)a_i(t)=0~~~,~~~{\bf F}(t)\dot{a}_i(t)=a_i(t).
\end{equation}
\end{definition}

\begin{remark}
It is customary to abbreviate the notation in situations like the one above by 
${\bf F}(t)\mathcal{A}(t)={\rm O}~~~,~~~{\bf F}(t)\dot{\mathcal{A}}(t)=\mathcal{A}(t)$.
\end{remark}

The main thrust of \cite{AD1} is that the geometry of fanning curves under the action of ${\rm GL}(V)$ is completely described by ${\bf F}(t)$ and its 
derivatives $\dot{\bf F}(t),\ddot{\bf F}(t)$.

\subsubsection{The horizontal curve and the horizontal derivative}

The derivative $\dot{{\bf F}}(t)$
is a curve of reflections whose -1 eigenspace is $\ell(t)$. The 1-eigenspaces at each $t$ form thus a curve $h(t)$ on ${\rm Gr}_n(V)$, called
the {\it horizontal curve} of $\ell(t)$. The projection operators corresponding to the decomposition
\begin{equation}\label{decomposition}
V=\ell(t)\oplus h(t)
\end{equation}
are denoted by ${\bf P}_h(t)=\frac{1}{2}({\bf I}+\dot{\bf F}(t))$,  ${\bf P}_\ell(t)={\bf I}-{\bf P}_h(t)$.

\begin{definition}
The {\it horizontal derivative} at time $t=\tau$ is the isomorphism 
\begin{equation}\label{horizontalderivative}
{\bf H}(\tau):\ell(\tau)\rightarrow h(\tau)~,~~{\bf H}(\tau)v={\bf P}_h(\tau)\dot{a}(\tau),
\end{equation}
for $a:I\rightarrow V$ any section of $\ell(t)$ with $a(\tau)=v$. The horizontal derivative of a frame $\mathcal{A}(t)$
for $\ell(t)$ is thus a frame for $h(t)$, denoted by 
\begin{equation}\nonumber
\mathcal{H}(t)={\bf H}(t)\mathcal{A}(t).
\end{equation}
\end{definition}

We remark that the inverse of (\ref{horizontalderivative}) is the restriction of ${\bf F}(t)$ to $h(t)$,
\begin{equation}\label{inversehorizontalderivative}
{\bf H}(t)^{-1} = {\bf F}(t)|_{h(t)} : h(t) \rightarrow \ell(t). 
\end{equation}

Given a frame $\mathcal{A}(t)$ for $\ell(t)$, the fanning condition implies that there exist curves of $n\times n$ matrices $P(t)$ and $Q(t)$ such that
\begin{equation}\label{PandQ}
\ddot{A}(t)+\dot{\mathcal{A}}(t)P(t)+\mathcal{A}(t)Q(t)={\rm O}. 
\end{equation}
\noindent The frame is called {\it normal} if $P=0$, which in turn is equivalent to $\mathcal{H}(t)=\dot{\mathcal{A}}(t)$.

\subsubsection{The Jacobi endomorphism and the Schwarzian}

Since $\dot{\bf F}(t)$ is a curve of reflections, its derivative $\ddot{\bf F}(t)$ interchanges the decomposition (\ref{decomposition}). 
The {\it Jacobi endomorphism} of $\ell(t)$ is the curve ${\bf K}(t)$ on ${\rm End}(V)$ defined by
\begin{equation}\nonumber
{\bf K}(t)=\frac{1}{4}\ddot{{\bf F}}(t)^2=\dot{\bf P}_\ell(t)^2. 
\end{equation}
A nice description of ${\bf K}(t)$ is given in terms of the {\it Schwarzian} $\{\mathcal{A}(t),t\}$ of a frame $\mathcal{A}(t)$. If $P(t)$ and $Q(t)$ are
as in (\ref{PandQ}), then $\{\mathcal{A}(t),t\}$ is defined by 
\begin{equation}\label{schwarziandefinition}
\{\mathcal{A}(t),t\} = 2Q(t)-(1/2)P(t)^2-\dot{P}(t).
\end{equation}
Note that if $\mathcal{A}(t)$ is normal, then 
$$\ddot{A}(t)=-(1/2)\mathcal{A}(t)\{\mathcal{A}(t),t\}.$$

\begin{proposition}\label{propjacobiendo}
Given a frame $\mathcal{A}(t)$ for $\ell(t)$, the matrices of $(1/2)\ddot{{\bf F}}(t)=-\dot{\bf P}_\ell(t)$ and ${\bf K}(t)$ in the basis $(\mathcal{A}(t),\mathcal{H}(t))$ 
are, respectively, 
\begin{equation}\nonumber
\left(
\begin{array}{cc}
{\rm O} & -(1/2)\{\mathcal{A}(t),t\} \\
-{\bf I} & {\rm O}
\end{array}
\right)~~{\rm ,}~~
\left(
\begin{array}{cc}
 (1/2)\{\mathcal{A}(t),t\} & {\rm O} \\
 {\rm O} & (1/2)\{\mathcal{A}(t),t\}
\end{array}
\right).
\end{equation}
\end{proposition}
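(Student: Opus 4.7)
The plan is to exploit two algebraic facts, then compute each block of the matrices in turn. First, differentiating $F\mathcal{A}=O$ gives $\dot{F}\mathcal{A} = -\mathcal{A}$, so $\mathcal{A}$ is indeed a $-1$-eigenbasis of the reflection $\dot{F}$, while $\mathcal{H}=P_h\dot{\mathcal{A}}$ is by construction a $+1$-eigenbasis. Second, $\dot{F}^2=I$ implies $\dot{F}\ddot{F}+\ddot{F}\dot{F}=O$, so $\ddot{F}$ anti-commutes with $\dot{F}$ and therefore sends $\ell(t)$ into $h(t)$ and $h(t)$ into $\ell(t)$. Hence the matrix of $(1/2)\ddot{F}$ in the basis $(\mathcal{A},\mathcal{H})$ is automatically off-diagonal, and what remains is to compute its two nonzero blocks.

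I would begin with a preliminary identification of $\mathcal{H}$ in terms of $\mathcal{A}$ and $\dot{\mathcal{A}}$. Differentiating $F\mathcal{A}=O$ once more and using $F\dot{\mathcal{A}}=\mathcal{A}$ together with the structural relation $\ddot{\mathcal{A}}=-\dot{\mathcal{A}}P-\mathcal{A}Q$, I would obtain $\ddot{F}\mathcal{A}=-2\dot{F}\dot{\mathcal{A}}+\mathcal{A}P$. Writing $\dot{\mathcal{A}}=\mathcal{H}+\mathcal{A}M$ for the (unknown) $\ell$-component matrix $M$, this becomes $\ddot{F}\mathcal{A}=-2\mathcal{H}+(2M+P)\mathcal{A}$; requiring the $\ell$-component to vanish (because $\ddot{F}\mathcal{A}\in h(t)$) forces $M=-P/2$, so
\begin{equation}\nonumber
\mathcal{H}=\dot{\mathcal{A}}+\tfrac{1}{2}\mathcal{A}P,\qquad \tfrac{1}{2}\ddot{F}\mathcal{A}=-\mathcal{H},
\end{equation}
which already yields the bottom-left block $-\mathbf{I}$.

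Next, I would compute $\ddot{F}\mathcal{H}$. Differentiating $\dot{F}\mathcal{H}=\mathcal{H}$ gives $\ddot{F}\mathcal{H}=\dot{\mathcal{H}}-\dot{F}\dot{\mathcal{H}}=2P_\ell\dot{\mathcal{H}}$, so I only need the $\ell$-component of $\dot{\mathcal{H}}$. Substituting $\mathcal{H}=\dot{\mathcal{A}}+\tfrac{1}{2}\mathcal{A}P$, using $\ddot{\mathcal{A}}=-\dot{\mathcal{A}}P-\mathcal{A}Q$ and then re-expressing $\dot{\mathcal{A}}=\mathcal{H}-\tfrac{1}{2}\mathcal{A}P$ to split $\dot{\mathcal{H}}$ into its $h$- and $\ell$-parts, the $\ell$-part comes out as $\mathcal{A}\bigl(\tfrac{1}{4}P^2-Q+\tfrac{1}{2}\dot{P}\bigr)$. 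Recognizing the combination $2Q-\tfrac{1}{2}P^2-\dot{P}=\{\mathcal{A}(t),t\}$ from \eqref{schwarziandefinition}, this gives $\tfrac{1}{2}\ddot{F}\mathcal{H}=-\tfrac{1}{2}\mathcal{A}\{\mathcal{A}(t),t\}$, which is the top-right block.

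Finally, the matrix of $\mathbf{K}=\tfrac{1}{4}\ddot{F}^2$ follows by squaring: $\mathbf{K}\mathcal{A}=\tfrac{1}{4}\ddot{F}(-2\mathcal{H})=\tfrac{1}{2}\mathcal{A}\{\mathcal{A}(t),t\}$ and $\mathbf{K}\mathcal{H}=\tfrac{1}{4}\ddot{F}(-\mathcal{A}\{\mathcal{A}(t),t\})=\tfrac{1}{2}\mathcal{H}\{\mathcal{A}(t),t\}$, producing the diagonal block form asserted in the proposition. The only genuinely non-routine step is the identification $\mathcal{H}=\dot{\mathcal{A}}+\tfrac{1}{2}\mathcal{A}P$; once that is in hand, everything else is a direct differentiation matching against the defining relations of $P$, $Q$ and the Schwarzian.
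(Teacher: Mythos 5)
Your computation is correct. Note that the paper itself does not prove this proposition: it is quoted in the preliminaries as part of the summary of the fanning-curve formalism of \cite{AD1}, so there is no in-paper argument to compare against; your derivation is a valid self-contained verification. The one genuinely non-routine step, the identity $\mathcal{H}(t)=\dot{\mathcal{A}}(t)+\tfrac{1}{2}\mathcal{A}(t)P(t)$ obtained by forcing the $\ell$-component of $\ddot{\bf F}\mathcal{A}$ to vanish, is exactly consistent with the paper's remark that a frame is normal ($P=0$) if and only if $\mathcal{H}(t)=\dot{\mathcal{A}}(t)$, and the remaining blocks follow, as you show, by differentiating $\dot{\bf F}\mathcal{H}=\mathcal{H}$ and matching against the definition (\ref{schwarziandefinition}) of the Schwarzian; the squaring step for ${\bf K}=\tfrac{1}{4}\ddot{\bf F}^2$ is then immediate.
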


\subsubsection{Fanning curves of Lagrangian subspaces}

Let us now suppose that $V$ is endowed with a symplectic form $\omega$. Recall that a subspace $\ell\subseteq V$ is called 
{\it Lagrangian} if $\ell=\ell^\omega:=\{u\in V~:~\omega(u,v)=0~\mbox{for all $v\in \ell$}\}$, and the collection of all such subspaces 
forms a submanifold $\Lambda(V,\omega)$, or simply $\Lambda(V)$, of ${\rm Gr}_n(V)$, the so-called {\it Lagrangian Grassmannian} of $V$. 
For each $\ell\in\Lambda(V)$ there is a canonical identification
\begin{equation}\label{identificationwronskian}
T_\ell\Lambda(V)\cong{\rm Bil}_{\rm sym}(\ell),
\end{equation}
through which the velocity vectors of a smooth curve
$\ell:I\subseteq\mathds{R}\rightarrow\Lambda(V)$
are regarded as symmetric bilinear forms. Concretely,
\begin{definition}
The {\it Wronskian} at time $t=\tau$ of a smooth curve $\ell:I\subseteq\mathds{R}\rightarrow\Lambda(V)$ is the symmetric
bilinear form $W(\tau)\in{\rm Bil}_{\rm sym}(\ell(\tau))$ given by
$W(\tau)(u,v)=\omega(u,\dot{a}(\tau))$, 
for $a:I\rightarrow V$ any section of $\ell(t)$ with $a(\tau)=v$.
\end{definition}

In this setting, the condition for a curve $\ell:I\subseteq\mathds{R}\rightarrow\Lambda(V)$ to be fanning is equivalent to 
$W(t)$ being non-degenerate for all $t$. Furthermore,

\begin{proposition}\label{propfanninglagr}
For a fanning curve $\ell(t)$ on $\Lambda(V)$, the following hold:
\begin{enumerate}
\item The fundamental endomorphism ${\bf F}(t)$ takes values in the Lie algebra $\mathfrak{sp}(V)$.
\item The horizontal curve $h(t)$ consists of Lagrangian subspaces.
\item The restriction of ${\bf K}(t)$ to $\ell(t)$ is symmetric with respect to $W(t)$.
\end{enumerate}
\end{proposition}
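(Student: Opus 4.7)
The plan is to fix a frame $\mathcal{A}(t)=(a_1(t),\ldots,a_n(t))$ for $\ell(t)$ and exploit a single identity that emerges from differentiating the Lagrangian condition: $\omega(a_i,a_j)=0$ implies
\[ \omega(a_i,\dot a_j)=-\omega(\dot a_i,a_j), \]
i.e.\ the matrix $[\omega(a_i,\dot a_j)]$ is symmetric. All three assertions will follow from this, combined with the symplectic algebra of $\mathbf{F}$ and its derivatives.

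For (1), I would verify $\omega(\mathbf{F}u,v)+\omega(u,\mathbf{F}v)=0$ by evaluating it on the four types of pairs from the basis $(\mathcal{A}(t),\dot{\mathcal{A}}(t))$. Three of the four pairs are immediate, and only the pair $(\dot a_i,\dot a_j)$ gives the nontrivial sum $\omega(a_i,\dot a_j)+\omega(\dot a_i,a_j)$, which vanishes by the identity above. For (2), since $\mathfrak{sp}(V)$ is a linear subspace of ${\rm End}(V)$, assertion (1) yields $\dot{\mathbf{F}}(t)\in\mathfrak{sp}(V)$ by differentiation. The horizontal space $h(t)$ is the $+1$-eigenspace of the involution $\dot{\mathbf{F}}(t)$, so for any $u,v\in h(t)$ the symplectic condition collapses to $2\omega(u,v)=0$. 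Hence $h(t)$ is $\omega$-isotropic and, having dimension $n$, Lagrangian.

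For (3), I would proceed in three beats. First, a useful reformulation of the Wronskian: for $u,v\in\ell(\tau)$ and any section $a(t)$ of $\ell(t)$ with $a(\tau)=v$, writing $\dot a(\tau)=\mathbf{P}_\ell(\tau)\dot a(\tau)+\mathbf{H}(\tau)v$ and killing the first term (since $u$ and $\mathbf{P}_\ell\dot a(\tau)$ both lie in the Lagrangian subspace $\ell(\tau)$) gives
\[ W(\tau)(u,v)=\omega(u,\mathbf{H}(\tau)v). \]
Second, differentiating (1) twice puts $\ddot{\mathbf{F}}(t)$ in $\mathfrak{sp}(V)$, so the square $\mathbf{K}=\tfrac{1}{4}\ddot{\mathbf{F}}^2$ is $\omega$-\emph{symmetric}: $\omega(\mathbf{K}u,w)=\omega(u,\mathbf{K}w)$ for all $u,w\in V$; applied to $w=\mathbf{H}v$ this gives $\omega(\mathbf{K}u,\mathbf{H}v)=\omega(u,\mathbf{K}\mathbf{H}v)$.

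The final step, which I expect to be the main obstacle, is to replace $\mathbf{K}\mathbf{H}$ by $\mathbf{H}\mathbf{K}$ on $\ell(\tau)$; this is precisely what the block-diagonal form of $\mathbf{K}$ in Proposition~\ref{propjacobiendo} provides, since the two diagonal blocks coincide with $\tfrac{1}{2}\{\mathcal{A}(t),t\}$, meaning that $\mathbf{K}|_\ell$ and $\mathbf{K}|_h$ are intertwined by the basis-to-basis isomorphism $\mathbf{H}\colon\ell\to h$, $\mathcal{A}\mapsto\mathcal{H}$. Combining everything,
\[ W(\tau)(\mathbf{K}u,v)=\omega(\mathbf{K}u,\mathbf{H}v)=\omega(u,\mathbf{H}\mathbf{K}v)=W(\tau)(u,\mathbf{K}v), \]
as required. (Alternatively, one could bypass the commutation step by a direct normal-frame computation, checking that the matrix $\tfrac{1}{2}\{\mathcal{A},t\}$ is symmetric with respect to the Wronskian matrix $[\omega(a_i,\mathcal{H}_j)]$; but the intrinsic route above is cleaner and emphasizes the role of $\ddot{\mathbf{F}}\in\mathfrak{sp}(V)$.)
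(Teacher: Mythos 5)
Your argument is correct. Note that the paper itself does not prove Proposition \ref{propfanninglagr}: Section \ref{fanningcurvessection} only summarizes these facts from the reference on fanning curves, so there is no in-paper proof to match; what you have written is a self-contained substitute, and every external ingredient you invoke (the reflection property of $\dot{\bf F}$, the interchange property of $\ddot{\bf F}$, and the block-diagonal matrix of ${\bf K}$ in the basis $(\mathcal{A},\mathcal{H})$ from Proposition \ref{propjacobiendo}) is stated in the paper and is independent of the symplectic structure, so there is no circularity. The individual steps check out: differentiating $\omega(a_i,a_j)=0$ gives exactly the cancellation needed on the pair $(\dot a_i,\dot a_j)$, so ${\bf F}\in\mathfrak{sp}(V)$; since $\mathfrak{sp}(V)$ is a linear subspace, $\dot{\bf F},\ddot{\bf F}\in\mathfrak{sp}(V)$, whence $h(t)$ is isotropic of dimension $n$ and ${\bf K}=\tfrac14\ddot{\bf F}^2$ is $\omega$-self-adjoint; the identity $W(\tau)(u,v)=\omega(u,{\bf H}(\tau)v)$ is justified because $u$ and ${\bf P}_\ell\dot a(\tau)$ both lie in the Lagrangian $\ell(\tau)$; and the intertwining ${\bf H}\circ({\bf K}|_\ell)=({\bf K}|_h)\circ{\bf H}$ is exactly what the coincidence of the two diagonal blocks in Proposition \ref{propjacobiendo} says (one should also note, as you implicitly do, that ${\bf K}$ preserves $\ell$, so $W({\bf K}u,v)$ makes sense). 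Compared with the computational route in the cited reference (normal frames and symmetry of $W\cdot\tfrac12\{\mathcal{A},t\}$, essentially the alternative you sketch at the end), your intrinsic argument buys a clearer conceptual reason for (3), namely $\ddot{\bf F}\in\mathfrak{sp}(V)$ plus the $\ell$--$h$ intertwining, at the modest cost of quoting Proposition \ref{propjacobiendo}.
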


\subsubsection{Transformation properties}

Fanning curves on ${\rm Gr}_n(V)$, resp. $\Lambda(V)$, are naturally acted upon by ${\rm GL}(V)$, resp. ${\rm SP}(V)$, and by the group ${\rm Diff}(\mathds{R})$ of diffeomorphisms
of $\mathds{R}$ via reparametrization.

\begin{proposition}\label{transformationproperties}
Let $\ell(t)$ be a fanning curve on ${\rm Gr}_n(V)$, resp. $\Lambda(V)$. Given ${\bf T}\in{\rm GL}(V)$, resp. ${\rm SP}(V)$, and 
$s\in{\rm Diff}(\mathds{R})$, then
\begin{enumerate}
\item The fundamental endomorphism, the Wronskian, and the Jacobi endomorphism of ${\bf T}\ell(t)$ are, respectively,
${\bf T}{\bf F}(t){\bf T}^{-1}$, $({\bf T}|_{\ell(t)})_*W(t)$ and ${\bf T}{\bf K}(t){\bf T}^{-1}$.
\item The fundamental endomorphism, the Wronskian, and the Jacobi endomorphism of $\ell(s(t))$ are, respectively,
$\dot{s}(t){\bf F}(s(t))$, $\dot{s}(t)W(s(t))$ and 
\begin{equation}\nonumber
\dot{s}(t)^2{\bf K}(s(t))+\frac{1}{2}\{s(t),t\}{\bf I},
\end{equation}
where $\{s(t),t\}=(d/dt)(\dot{s}^{-1}\ddot{s})-(1/2)(\dot{s}^{-1}\ddot{s})^2$ is the Schwarzian derivative of $s(t)$.
\end{enumerate}
\end{proposition}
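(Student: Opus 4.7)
The plan is to verify the six transformation laws by selecting an adapted frame for the transformed curve and substituting it into the defining relations of the three objects.

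For part (1), if $\mathcal{A}(t)=(a_{i}(t))$ is any frame for $\ell(t)$ then by linearity $\mathbf{T}\mathcal{A}(t)$ is a frame for $\mathbf{T}\ell(t)$ with derivative $\mathbf{T}\dot{\mathcal{A}}(t)$. Inserting this into the defining relations for the fundamental endomorphism reads off $\mathbf{F}_{\mathbf{T}\ell}=\mathbf{T}\mathbf{F}\mathbf{T}^{-1}$; differentiating twice and using $\mathbf{K}=(1/4)\ddot{\mathbf{F}}^{2}$ then gives the $\mathbf{K}$-formula immediately. For the Wronskian one uses that $\mathbf{T}a$ is a section of $\mathbf{T}\ell$ with value $\mathbf{T}v$ at $\tau$ whenever $a$ is a section of $\ell$ with $a(\tau)=v$, combined with $\mathbf{T}^{*}\omega=\omega$; this is the only place where symplecticity enters.

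For part (2) I would take $\tilde{\mathcal{A}}(t):=\mathcal{A}(s(t))$ as frame for $\tilde\ell(t):=\ell(s(t))$, so that $\dot{\tilde{\mathcal{A}}}(t)=\dot{s}(t)\dot{\mathcal{A}}(s(t))$ by the chain rule; the formulas for $\mathbf{F}_{\tilde\ell}$ and $W_{\tilde\ell}$ then fall out of the defining relations. The subtle piece is the Jacobi endomorphism, which I would handle via Proposition \ref{propjacobiendo}: it suffices to check $\mathbf{K}_{\tilde\ell}$ on $\tilde\ell(t)=\ell(s(t))$, the full endomorphism being determined by the identical block structure of $\mathbf{K}$ in the basis $(\mathcal{A},\mathcal{H})$. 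Assuming without loss of generality that $\mathcal{A}$ is normal, differentiating $\tilde{\mathcal{A}}$ twice identifies the coefficients $\tilde{P}(t)=-(\ddot{s}/\dot{s})\mathbf{I}$ and $\tilde{Q}(t)=\dot{s}^{2}Q(s(t))$ in equation (\ref{PandQ}) for $\tilde{\mathcal{A}}$. Substituting these into (\ref{schwarziandefinition}) and simplifying yields
\[
\{\tilde{\mathcal{A}}(t),t\}=\dot{s}(t)^{2}\{\mathcal{A}(s(t)),s\}+\{s(t),t\}\,\mathbf{I},
\]
where the second summand is precisely the classical Schwarzian derivative of $s$. A final application of Proposition \ref{propjacobiendo} translates this into the required formula for $\mathbf{K}_{\tilde\ell}(t)$.

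The Lagrangian variants require no extra argument: Proposition \ref{propfanninglagr} keeps all the invariants within the symplectic category, and reparametrization leaves $V$ and $\omega$ untouched. The main obstacle is the Schwarzian bookkeeping of part (2); the key observation is the emergence of the classical Schwarzian derivative $\{s(t),t\}=(d/dt)(\dot{s}^{-1}\ddot{s})-(1/2)(\dot{s}^{-1}\ddot{s})^{2}$ inside the combination $2\dot{\tilde{P}}-\tilde{P}^{2}$ of (\ref{schwarziandefinition}), and once that is recognized the rest is algebraic.
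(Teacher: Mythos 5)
The paper itself offers no proof of this proposition: Section~\ref{fanningcurvessection} only summarizes results from \cite{AD1}, so your argument has to stand on its own. Your frame-based strategy is the natural one, and most of the individual computations check out: all of part (1), the Wronskian law under reparametrization, the coefficients $\tilde{P}(t)=-(\ddot{s}/\dot{s})\mathbf{I}$, $\tilde{Q}(t)=\dot{s}^{2}Q(s(t))$, and the identity $\{\tilde{\mathcal{A}}(t),t\}=\dot{s}(t)^{2}\{\mathcal{A}(s(t)),s\}+\{s(t),t\}\mathbf{I}$ are all correct.

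Two points need attention. First, your assertion that the formula for the fundamental endomorphism of $\ell(s(t))$ ``falls out of the defining relations'' is not accurate for the formula as printed: with the paper's normalization $\mathbf{F}(t)\mathcal{A}(t)=0$, $\mathbf{F}(t)\dot{\mathcal{A}}(t)=\mathcal{A}(t)$, and $\dot{\tilde{\mathcal{A}}}(t)=\dot{s}(t)\dot{\mathcal{A}}(s(t))$, the defining relations force $\mathbf{F}_{\tilde\ell}(t)\bigl(\dot{s}(t)\dot{\mathcal{A}}(s(t))\bigr)=\mathcal{A}(s(t))$, i.e.\ $\mathbf{F}_{\tilde\ell}(t)=\dot{s}(t)^{-1}\mathbf{F}(s(t))$ --- the reciprocal of the factor in the statement (contrast the Wronskian, which genuinely picks up $\dot{s}$ because it pairs a point of $\ell$ with a velocity). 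So the printed factor $\dot{s}(t)\mathbf{F}(s(t))$ is inconsistent with the paper's own definition of $\mathbf{F}$ (most likely a typo inherited from a different normalization), and a careful execution of your computation would have detected this rather than confirmed it.

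Second, and more seriously, the passage from the Schwarzian identity to the operator identity $\mathbf{K}_{\tilde\ell}(t)=\dot{s}^{2}\mathbf{K}(s(t))+\tfrac12\{s,t\}\mathbf{I}$ on all of $V$ is not justified by ``the identical block structure in the basis $(\mathcal{A},\mathcal{H})$''. Proposition~\ref{propjacobiendo} gives the block-scalar form of $\mathbf{K}_{\tilde\ell}(t)$ in the basis $(\tilde{\mathcal{A}}(t),\tilde{\mathcal{H}}(t))$ and of $\mathbf{K}(s(t))$ in the basis $(\mathcal{A}(s(t)),\mathcal{H}(s(t)))$, and these refer to different decompositions: the horizontal curve is not reparametrization-invariant, since $\dot{\mathbf{F}}_{\tilde\ell}(t)=\dot{\mathbf{F}}(s(t))-(\ddot{s}/\dot{s}^{2})\mathbf{F}(s(t))$, so $\tilde{h}(t)\neq h(s(t))$ whenever $\ddot{s}(t)\neq 0$. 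Two endomorphisms that agree on $\ell(s(t))$ and are block-scalar with respect to two different complements need not coincide, so your argument only establishes the identity for the restrictions to $\tilde\ell(t)$. The full statement is true, but it requires a further computation: differentiate $\mathbf{F}_{\tilde\ell}=\dot{s}^{-1}\mathbf{F}(s)$ twice to obtain $\ddot{\mathbf{F}}_{\tilde\ell}(t)=\dot{s}\,\ddot{\mathbf{F}}(s(t))-(\ddot{s}/\dot{s})\dot{\mathbf{F}}(s(t))-\bigl(\tfrac{\rm d}{{\rm d}t}(\ddot{s}/\dot{s}^{2})\bigr)\mathbf{F}(s(t))$, express the three endomorphisms as matrices in the single basis $(\mathcal{A}(s(t)),\mathcal{H}(s(t)))$ and square; the cross terms cancel and $\tfrac14\ddot{\mathbf{F}}_{\tilde\ell}^{2}=\dot{s}^{2}\mathbf{K}(s(t))+\tfrac12\{s,t\}\mathbf{I}$ follows. (A minor slip besides: the Schwarzian of $s$ emerges from the combination $-\dot{\tilde{P}}-\tfrac12\tilde{P}^{2}$ in (\ref{schwarziandefinition}), not from $2\dot{\tilde{P}}-\tilde{P}^{2}$; your displayed identity is nevertheless correct.)
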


\section{Moving planes, Jacobi curves and their invariants}\label{sectionmovingplanes}

Let us consider a moving plane $\mathscr{P}$ on a smooth manifold $X$, of the type
\begin{equation}\label{movingplane}
\mathscr{P}=(\Delta_{2n},\Delta_n,\Phi_t),
\end{equation}
and let
$S$ be the vector field on $X$ that generates $\Phi_t$. In particular, we will also be interested in the cases where
\begin{itemize}
\item[(I)] $X=(X^{2m},\omega)$ is a symplectic manifold, $\Delta_{2n}=TX$, $\Delta_n$ is a Lagrangian distribution
on $X$ (i.e. each $\Delta_n(x)$ is a Lagrangian subspace of $T_xX$), and $\Phi_t$ is a symplectic flow (i.e. $(\Phi_t)^*\omega=\omega$).
\item[(II)] $X=(X^{2m+1},\alpha)$ is an exact contact manifold, in which case we let $\omega={\rm d}\alpha$, 
$\Delta_{2n}$ is the contact distribution ${\rm ker}(\alpha)$, 
$\Delta_n$ is a Legendrian distribution $\mathcal{L}$ (i.e. each $\mathcal{L}_x$ is a Lagrangian subspace of $({\rm ker}(\alpha_x),\omega_x)$),
and $\Phi_t$ is an exact contact flow (i.e. $(\Phi_t)^*\alpha=\alpha$).
\end{itemize}
\noindent It then follows that the Jacobi curve $\ell_x(t)$ of $\mathscr{P}$, based at a given $x\in X$ (recall (\ref{jacobicurvedefinition}) ), is a curve in the half-Grassmannian ${\rm Gr}_n(\Delta_{2n}(x))$ and
that, in cases (I) and (II), $\ell_x(t)$ takes values on the Lagrangian Grassmannian 
$\Lambda(V)$, where $V=(\Delta_{2n}(x),\omega_x)$.

\begin{example}\label{examplesmovingplanes}
The examples to keep in mind are provided by the geodesic flows of sprays and Finsler metrics.
Let $S$ be a spray on $M^n$.
\begin{enumerate}
\item The action of $\Phi_t^S$ on the vertical distribution $\mathcal{V}TM$ gives rise to the moving plane 
\begin{equation}\label{movingplanespray}
\mathscr{P}=\bigl(T(TM\backslash 0),\mathcal{V}TM,\Phi_t^S\bigr).
\end{equation}
\item Suppose $S$ is the geodesic spray $S_F$ of a Finsler metric $F$. The canonical 1-form $\alpha$ pulls-back to the null form on each fiber of $\tau:T^*M\rightarrow M$,
and so does $\omega$. In particular, $\mathcal{V}T^*M$ and, hence, $\mathcal{V}TM$ are Lagrangian distributions on $(T^*M\backslash,\omega)$ and $(TM\backslash 0,\omega_F)$,
respectively. Furthemore, the flows $\Phi_t^{S_F}$ and $\Phi_t^{S_F^*}$ are symplectic. Therefore, (\ref{movingplanespray}) is of type (I) with respect to $\omega_F$, and
\begin{equation}\nonumber
 \mathscr{P}_*=\bigl(T(T^*M\backslash 0),\mathcal{V}T^*M,\Phi_t^{S_F^*}\bigr)
\end{equation}
is of type (I) on $(T^*M\backslash 0,\omega)$.

\item Still in the Finslerian setting, the tangent spaces to the fibers of 
$\Sigma_FM\rightarrow M$ and $\Sigma_F^*M\rightarrow M$ define, respectively, the {\it vertical distributions}
$\mathcal{V}\Sigma_FM$ and $\mathcal{V}\Sigma_F^*M$. As before, these are Legendrian distributions on $(\Sigma_FM,\alpha_F)$ and 
$(\Sigma_F^*M,\alpha)$. We therefore obtain moving planes of type (II) on these contact manifolds,
\begin{equation}\nonumber
\mathscr{P}^{\rm c}  =  \bigl( {\rm ker}(\alpha_F), \mathcal{V}\Sigma_FM, \Phi_t^{S_F} \bigr),
~~~\mathscr{P}_*^{\rm c}  =  \bigl({\rm ker}(\alpha),\mathcal{V}\Sigma_F^*M,\Phi_t^{S_F^*}\bigr). 
\end{equation}
\end{enumerate}
\end{example}

Given a frame $U_1,\cdots,U_n$ for $\Delta_n$ defined around a given point $x\in X$, a
frame for the corresponding Jacobi curve $\ell_x(t)$ is obtained by setting 
\begin{equation}\nonumber
a_i(t)=\bigl({\Phi_t}^*U_i\bigr)(x)~,~~i=1,\cdots,n. 
\end{equation}

\noindent From the properties of flows, the derivative $\dot{a}_i(t)$ computes as
\begin{equation}\label{derivativeframe}
\dot{a}_i(t) = \bigl({\Phi_t}^*[S,U_i]\bigr)(x),
\end{equation}
so that we conclude

\begin{lemma}
The Jacobi curve $\ell_x(t)$ is fanning if, and only if, along the flow line $t\mapsto\Phi_t(x)$, 
\begin{equation}\label{framemovingplane}
U_1,\cdots,U_n,[S,U_1],\cdots,[S,U_n] 
\end{equation}
constitute a frame for $\Delta_{2n}$. In particular, this condition on $(\ref{framemovingplane})$ does not depend on the choice
of the local frame $U_1,\cdots,U_n$.
\end{lemma}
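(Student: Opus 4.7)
The plan is to transport the alleged frame at $\Phi_t(x)$ back to the origin $x$ via the flow, using the fact that $\Phi_t$ preserves $\Delta_{2n}$, and then verify invariance under change of local frame by a block-triangular transformation argument.

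First, I would observe that by definition $a_i(t)={\rm d}\Phi_{-t}(\Phi_t(x))U_i(\Phi_t(x))$, and by (\ref{derivativeframe}) also $\dot{a}_i(t)={\rm d}\Phi_{-t}(\Phi_t(x))[S,U_i](\Phi_t(x))$. Since $\Phi_t$ leaves $\Delta_{2n}$ invariant, the restriction
\begin{equation}\nonumber
{\rm d}\Phi_{-t}(\Phi_t(x))\bigl|_{\Delta_{2n}(\Phi_t(x))}:\Delta_{2n}(\Phi_t(x))\rightarrow\Delta_{2n}(x)
\end{equation}
is a linear isomorphism. It maps the ordered $2n$-tuple $\bigl(U_1,\dots,U_n,[S,U_1],\dots,[S,U_n]\bigr)$ evaluated at $\Phi_t(x)$ to $\bigl(a_1(t),\dots,a_n(t),\dot a_1(t),\dots,\dot a_n(t)\bigr)$. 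Consequently, the latter is a basis of $\Delta_{2n}(x)$ if and only if the former is a basis of $\Delta_{2n}(\Phi_t(x))$, which by the characterization recalled in $\S$\ref{fanningcurvessubsection} is exactly the fanning condition for $\ell_x(t)$ at time $t$.

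For the ``in particular'' statement, I would take a second local frame $V_1,\dots,V_n$ of $\Delta_n$ related by $V_i=\sum_j g_{ij}U_j$ for some smooth matrix-valued function $g=(g_{ij})$ with $\det g\neq 0$. Expanding by Leibniz,
\begin{equation}\nonumber
[S,V_i]=\sum_j (S\cdot g_{ij})U_j+\sum_j g_{ij}[S,U_j],
\end{equation}
so that the transition matrix from $(U_i,[S,U_i])$ to $(V_i,[S,V_i])$ is block lower-triangular with diagonal blocks $g$ and $g$; its determinant equals $(\det g)^2\neq 0$. Hence the two $2n$-tuples simultaneously span $\Delta_{2n}$ at any point of the flow line.

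There is no substantial obstacle here: the key mechanism is simply that pushforward along the flow converts ``constancy of a frame condition along the flow line at fixed point'' into the fanning condition for the flowed-back curve. The only point requiring a small check is that $[S,\hskip1pt\cdot\hskip1pt]$ transforms linearly modulo $\Delta_n$, which is what the block-triangular argument formalizes.
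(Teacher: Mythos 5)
Your proposal is correct and follows essentially the same route the paper takes (the paper leaves it implicit after equation (\ref{derivativeframe})): pull the tuple $\bigl(U_i,[S,U_i]\bigr)$ at $\Phi_t(x)$ back to $\bigl(a_i(t),\dot a_i(t)\bigr)$ in $\Delta_{2n}(x)$ via ${\rm d}\Phi_{-t}$, which is an isomorphism preserving $\Delta_{2n}$, and invoke the frame characterization of fanning. Your explicit block-triangular change-of-frame computation for the ``in particular'' clause is a fine way to make precise what the paper treats as immediate from the frame-independence of the fanning property itself.
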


\begin{definition}
We shall call the moving plane $\mathscr{P}$ {\it regular} if (\ref{framemovingplane}) are a local frame for $\Delta_{2n}$ whenever 
$U_1,\cdots,U_n$ are a local frame for $\Delta_n$.
\end{definition}

For a regular moving plane (\ref{movingplane}), we shall denote by ${\bf F}_x(t)$, ${\bf P}_{\ell_x}(t)$, ${\bf K}_x(t)$, $h_x(t)$ and, in cases (I) and (II), 
$W_x(t)$ the invariants of the fanning curve $\ell_x(t)$, for $x\in X$. Evaluating at $t=0$ and by varying $x$, 
one thus obtains, respectively, sections $\mathcal{F}$, $\mathcal{P}_{\Delta_n}$,
$\mathcal{K}$ of ${\rm End}(\Delta_{2n})\rightarrow X$, a distribution $\mathcal{H}\subset\Delta_{2n}$ and a section $\mathcal{W}$ of ${\rm Bil}_{\rm sym}(\Delta_n)\rightarrow X$.

\begin{lemma}
Along an orbit $t\mapsto\Phi_t(x)$, $\mathcal{H}_{\Phi_t(x)}$, $\mathcal{F}_{\Phi_t(x)}$, $(\mathcal{P}_{\Delta_n})_{\Phi_t(x)}$, $\mathcal{K}_{\Phi_t(x)}$ and $\mathcal{W}_{\Phi_t(x)}$
correspond to $h_x(t)$, ${\bf F}_x(t)$, ${\bf P}_{\ell_x}(t)$, ${\bf K}_x(t)$ and $W_x(t)$ via the isomorphisms
\begin{equation}\nonumber
{\rm d}\Phi_t(x)|_{\Delta_{2n}(x)}:\Delta_{2n}(x)  \rightarrow  \Delta_{2n}(\Phi_t(x)), ~~~
{\rm d}\Phi_t(x)|_{\ell_x(t)}:\ell_x(t)  \rightarrow  \Delta_n(\Phi_t(x)).  
\end{equation}  
\end{lemma}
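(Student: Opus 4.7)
My plan is to prove this by naturality: the two Jacobi curves $\ell_x(\cdot)$ and $\ell_{\Phi_t(x)}(\cdot)$ are related by a time-shift composed with a linear isomorphism, and then Proposition \ref{transformationproperties} delivers all the stated correspondences at once.

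The first step is the cocycle identity
\begin{equation*}
\ell_x(s+t)=\mathrm{d}\Phi_{-t}(\Phi_t(x))\,\ell_{\Phi_t(x)}(s),
\end{equation*}
which follows directly from (\ref{jacobicurvedefinition}) and the group law $\Phi_{s+t}=\Phi_s\circ\Phi_t$. Writing $y=\Phi_t(x)$ and $\mathbf{T}:=\mathrm{d}\Phi_t(x)|_{\Delta_{2n}(x)}\colon\Delta_{2n}(x)\to\Delta_{2n}(y)$ for the linear isomorphism supplied by the $\Phi_t$-invariance of $\Delta_{2n}$, this identity rewrites as $\ell_y(s)=\mathbf{T}\,\ell_x(s+t)$. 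In cases (I) and (II), $\mathbf{T}$ is moreover symplectic, since $(\Phi_t)^*\omega=\omega$ directly in case (I), while in case (II) $(\Phi_t)^*\alpha=\alpha$ implies $(\Phi_t)^*\omega=\omega$ on $\ker\alpha$.

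Next, I would apply Proposition \ref{transformationproperties} to the composition ``reparametrize by $s\mapsto s+t$, then apply $\mathbf{T}$''. The translation has unit derivative and vanishing Schwarzian, so it contributes no correction; only the $\mathbf{T}$-conjugation/pushforward remains. Evaluating the resulting identities $\mathbf{F}_y(s)=\mathbf{T}\mathbf{F}_x(s+t)\mathbf{T}^{-1}$, $h_y(s)=\mathbf{T}\,h_x(s+t)$, and so on, at $s=0$, and recalling that by construction $\mathcal{F}_y=\mathbf{F}_y(0)$, $(\mathcal{P}_{\Delta_n})_y=\mathbf{P}_{\ell_y}(0)$, $\mathcal{K}_y=\mathbf{K}_y(0)$, $\mathcal{H}_y=h_y(0)$, and $\mathcal{W}_y=W_y(0)$, yields the claimed correspondences. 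Note that the second isomorphism appearing in the statement, $\mathrm{d}\Phi_t(x)|_{\ell_x(t)}\colon\ell_x(t)\to\Delta_n(y)$, is exactly the restriction $\mathbf{T}|_{\ell_x(t)}$, since $\mathbf{T}\,\ell_x(t)=\ell_y(0)=\Delta_n(y)$; this is the natural map used for pushing forward the Wronskian.

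No serious obstacle is anticipated: the argument is essentially naturality of the Jacobi-curve construction under the flow, reduced to the transformation properties already compiled in Proposition \ref{transformationproperties}. The regularity of $\mathscr{P}$ guarantees that the invariants are defined along the whole orbit, so the evaluation at $s=0$ is legitimate; the only real care needed is the bookkeeping that the time-shift component of the transformation drops out because its derivative is $1$ and its Schwarzian is $0$.
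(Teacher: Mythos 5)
Your proposal is correct and follows essentially the same route as the paper: the paper's one-line proof notes the identity ${\rm d}\Phi_t(x)\ell_x(s)=\ell_{\Phi_t(x)}(s-t)$ (equivalent to your cocycle relation $\ell_{\Phi_t(x)}(s)={\rm d}\Phi_t(x)\,\ell_x(s+t)$) and then invokes Proposition \ref{transformationproperties}, exactly as you do. Your additional bookkeeping — that the time shift has unit derivative and vanishing Schwarzian, that ${\rm d}\Phi_t(x)$ is symplectic in cases (I) and (II), and that the second isomorphism is just the restriction ${\rm d}\Phi_t(x)|_{\ell_x(t)}$ used to push forward the Wronskian — is a correct elaboration of what the paper leaves implicit.
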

\begin{proof}
Just note that ${\rm d}\Phi_t(x)\ell_x(s)=\ell_{\Phi_t(x)}(s-t)$ and apply Proposition \ref{transformationproperties}. 
\end{proof}

\noindent {\it Reduction by a contact type hypersurface.}
Let $X^{2n}$, $\omega$, $S$, $\Sigma^{2n-1}$, $C$, $\alpha$, be as in Remark \ref{remarkcontacttype}. Let, furthermore, $\Delta_n$ be a Lagrangian distribution on $X$ such 
that $\Delta_n\subset{\rm ker}(\alpha)$ and $C\in\Delta_n$, so that $L:=\Delta_n\cap T\Sigma$
is a Legendrian distribution on $\Sigma$. Then,

\begin{proposition}\label{propcontactrelation}
Given $x\in\Sigma$, let $\ell_x(t)\in\Lambda(T_xX)$ and $\ell_x^{\rm c}(t)\in\Lambda({\rm ker}(\alpha)_x)$ be the Jacobi curves of the 
moving planes $\mathscr{P}=(TX,\Delta_n,\Phi_t^S)$ and
$\mathscr{P}^{\rm c}=({\rm ker}(\alpha),L,\Phi_t^S|_\Sigma)$, on $X$ and $\Sigma$ respectively, based at $x$, 
and let $W_x(t)$ and $W_x^{\rm c}(t)$ be their Wronskians. Then,
\begin{enumerate}
\item We have a $W_x(t)$-orthogonal decomposition
\begin{equation}\label{contactdecomposition}
\ell_x(t)=\ell_x^{\rm c}(t)\oplus{\rm span}[C_x-tS_x],
\end{equation}
and the restriction of $W_x(t)$ to $\ell_x^{\rm c}(t)$ is equal to $W_x^{\rm c}(t)$.
\item $\mathscr{P}$ is regular in a neighborhood of $\Sigma$ if, and only if, $\mathscr{P}^{\rm c}$ is regular. This being the case, the horizontal curves
$h_x(t)$, $h_x^{\rm c}(t)$, and the Jacobi endomorphisms ${\bf K}_x(t)$, ${\bf K}_x^{\rm c}(t)$, of $\ell_x(t)$ and $\ell_x^{\rm c}(t)$, satisfy
\begin{equation}\nonumber
h_x^{\rm c}(t)  =  h_x(t)\cap {\rm ker}(\alpha_x)~,~~{\bf K}_x(t)\big|_{L_x}  =  {\bf K}_x^{\rm c}(t)\big|_{L_x}~,~~{\bf K}_x(t)(C_x-tS_x)  =  0.
\end{equation}
\end{enumerate}
\end{proposition}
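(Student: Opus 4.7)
The plan begins with an identity that controls how the Liouville field propagates along $\Phi_t^S$. Since $\Phi_t^S$ preserves $S$ and $[C,S]=S$, differentiating $d\Phi_{-t}(C_{\Phi_t(x)})$ in $t$ gives $d\Phi_{-t}([S,C]_{\Phi_t(x)})=-S_x$, which integrates to
\[
d\Phi_{-t}\bigl(C_{\Phi_t(x)}\bigr) \;=\; C_x - tS_x.
\]
As $C$ is transverse to $\Sigma$ and $C\in\Delta_n$, for $y\in\Sigma$ one has $\Delta_n(y) = L_y\oplus\mathrm{span}[C_y]$; transporting this splitting back to $x$ by $d\Phi_{-t}$—and using $(\Phi_t^S)^*\alpha=\alpha$ to see that $L$ is sent into $\ker(\alpha)_x$—produces the decomposition (\ref{contactdecomposition}). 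The $W_x(t)$-orthogonality then follows from the Wronskian definition applied to the section $s\mapsto C_x - sS_x$: for $u\in\ell_x^c(t)\subset T_x\Sigma$,
\[
W_x(t)(u,\,C_x-tS_x) \;=\; \omega_x(u,\,-S_x) \;=\; 0,
\]
because $S$ generates the characteristic distribution of $\Sigma$. The equality $W_x(t)|_{\ell_x^c(t)} = W_x^c(t)$ follows by noting that any section $a^c(s)$ of $\ell_x^c(\cdot)$ is simultaneously a section of $\ell_x(\cdot)$, and that $\omega$ and $-d\alpha$ coincide on $\ker(\alpha)$ (Remark \ref{remarkcontacttype}).

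For part (2), I first observe that the second summand of (\ref{contactdecomposition}) is never Wronskian-degenerate: $W_x(t)(C_x-tS_x,C_x-tS_x) = \alpha_x(S_x)$, and $\alpha_x(S_x)\neq 0$ (otherwise $i_S\omega$ would vanish on the contact distribution, contradicting the contact property on $\Sigma$). Combined with the $W$-orthogonality this yields that $W_x(t)$ is nondegenerate precisely when $W_x^c(t)$ is, giving the regularity equivalence. Assuming regularity and picking a normal frame $\mathcal{A}^c$ for $\ell_x^c$, I would show that $\mathcal{A}(t):=(\mathcal{A}^c(t),\, C_x-tS_x)$ is a normal frame for $\ell_x$: $\ddot{\mathcal{A}^c}\in\ell_x^c(t)$ by normality of $\mathcal{A}^c$ and $(d^2/dt^2)(C_x-tS_x)=0$, so (\ref{PandQ}) holds with $P\equiv 0$. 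Hence $\mathcal{H}(t) = \dot{\mathcal{A}}(t) = (\dot{\mathcal{A}^c}(t),\,-S_x)$, giving $h_x(t) = h_x^c(t)\oplus\mathrm{span}[S_x]$; intersecting with $\ker(\alpha_x)$ removes the $S_x$-summand and recovers $h_x^c(t)$.

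Within this extended normal frame the Schwarzian $\{\mathcal{A},t\}$ is block-diagonal with blocks $\{\mathcal{A}^c,t\}$ and $0$, because the second column has vanishing second derivative. Reading off the matrix of $\mathbf{K}_x(t)$ from Proposition \ref{propjacobiendo}, $\mathbf{K}_x(t)$ acts as $\mathbf{K}_x^c(t)$ on $\ell_x^c(t)$ and on $h_x^c(t)$ while annihilating both $C_x-tS_x$ and $S_x$; restricting to $L_x\subset\ker(\alpha_x)$ yields $\mathbf{K}_x(t)|_{L_x} = \mathbf{K}_x^c(t)|_{L_x}$, and the vanishing $\mathbf{K}_x(t)(C_x-tS_x)=0$ is built in. I expect the main obstacle to be the first step—establishing $d\Phi_{-t}(C_{\Phi_t(x)}) = C_x - tS_x$ and identifying $d\Phi_{-t}(L_{\Phi_t(x)})$ with $\ell_x^c(t)$ inside $\ker(\alpha_x)$; once this explicit description of $\ell_x$ in terms of $\ell_x^c$ is in hand, the remaining comparisons reduce to careful bookkeeping in the extended normal frame, whose block structure propagates all the claimed identifications automatically.
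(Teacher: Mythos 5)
Your proposal is correct and follows essentially the same route as the paper: compute $a_n(t)={\rm d}\Phi_{-t}(C_{\Phi_t(x)})=C_x-tS_x$ from $[C,S]=S$, adjoin it to a frame of $\ell_x^{\rm c}(t)$, observe the block structure of the resulting frame data (Schwarzian $\{\mathcal{A},t\}={\rm diag}(\{\mathcal{A}^{\rm c},t\},0)$), and read off the horizontal curve and Jacobi endomorphism from Proposition \ref{propjacobiendo}. Your minor variations (using a normal frame so $P\equiv 0$, and deducing the regularity equivalence from nondegeneracy of the block-diagonal Wronskian with $W(C_x-tS_x,C_x-tS_x)=\alpha_x(S_x)\neq 0$, rather than the paper's direct span argument) are sound and only streamline the same computation.
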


\begin{proof}
By hypothesis, we can choose a local frame for $\Delta_n$ around $x$, $U_1,\cdots,U_n$, such that $U_n=C$ and that, along $\Sigma$, $U_1,\cdots,U_{n-1}$ is
a frame for $L$. Let $\mathcal{A}^{\rm c}(t)$ and $\mathcal{A}(t)=(\mathcal{A}^{\rm c}(t),a_n(t))$ be the corresponding frames for $\ell_x^{\rm c}(t)$ and
$\ell_x(t)$, respectively. It follows from $[C,S]=S$ and ${\Phi_t}^*S=S$ that
\begin{equation}\nonumber
\dot{a}_n(t)  =  ({\rm d}/{\rm d}t)({\Phi_t}^*C)_x =  ({\Phi_t}^*[S,C])_x =  - S_x. 
\end{equation}
Since $a_n(0)=C_x$, we obtain $a_n(t)=C_x-tS_x$ and (\ref{contactdecomposition}) follows. Observe that we have a direct sum decomposition 
$T_x\Sigma={\rm ker}(\alpha_x)\oplus{\rm span}[S_x]\oplus{\rm span}[C_x]$. Since ${\rm span}(\mathcal{A}^{\rm c}(t),\dot{\mathcal{A}}^{\rm c}(t))\subseteq{\rm ker}(\alpha_x)$,
and $a_n(t)=C_x-tS_x$, it thus follows that 
$\ell_x(t)$ is fanning if, and only if, $\ell_x^{\rm c}(t)$ is fanning. Being the case,
let $P(t)$, $Q(t)$, and $P^{\rm c}(t)$, $Q^{\rm c}(t)$ be given by (\ref{PandQ}) with respect to $\mathcal{A}(t)$ and $\mathcal{A}^{\rm c}(t)$, respectively. Since 
$\ddot{a}=0$, it follows that $P={\rm diag}(P^{\rm c},0)$ and $Q={\rm diag}(Q^{\rm c},0)$. Recalling (\ref{schwarziandefinition}), we conclude 
that $\{\mathcal{A}(t),t\}={\rm diag}(\{\mathcal{A}^{\rm c}(t),t\},0)$. The assertion about the Jacobi endomorphisms follows now from Proposition \ref{propjacobiendo}.
The ones about the Wronskians and the horizontal curves are analogues.
\end{proof}

\subsection{Expressions in terms of Lie brackets}\label{sectionliebrackets}

The objects $\mathcal{F}$, $\mathcal{H}$, $\mathcal{K}$ and $\mathcal{W}$ can be described in terms of taking Lie brackets with the vector field $S$.
Firstly, if $\mathcal{T}$ is a section of ${\rm End}(\Delta_{2n})\rightarrow X$, 
the Lie derivative $[S,\mathcal{T}]$ is defined and it holds that
\begin{equation}\nonumber
\frac{\rm d}{{\rm d}t}(\Phi_t)^*\mathcal{T}=(\Phi_t)^*[S,\mathcal{T}]. 
\end{equation}
It follows from this, (\ref{derivativeframe}), and $\S$\ref{fanningcurvessubsection} that
\begin{enumerate}
\item The endomorphism $\mathcal{F}$ is characterized by 
\begin{equation}\label{fundamentalendomorphismmovingplane}
\mathcal{F}(U_i)=0,~~\mathcal{F}([S,U_i])=U_i, 
\end{equation}
$i=1,\cdots,n$,
for any local frame $U_1,\cdots,U_n$ for $\Delta_n$.
\item The Lie derivative $[S,\mathcal{F}]$ is a section of reflections across $\mathcal{H}$.
\item $\mathcal{K}$ is the square of $(1/2)[S,[S,\mathcal{F}]]=-[S,\mathcal{P}_{\Delta_n}]$. 
Furthermore, let {\rm H} be the section of ${\rm Iso}(\Delta_n,\mathcal{H})\rightarrow X$ corresponding to (\ref{horizontalderivative}), so that 
\begin{equation}\nonumber
{\rm H}(U)=\mathcal{P}_\mathcal{H}([S,U]),
\end{equation}
for $U$ a vector field tangent to $\Delta_n$. Then,
\begin{equation}\label{jacobi1}
\mathcal{K}|_{\Delta_n}  =  [S,\mathcal{P}_{\Delta_n}]\big|_\mathcal{H}\circ{\rm H}. 
\end{equation}
Applying (\ref{jacobi1}) to a vector field $U$ tangent to $\Delta_n$ and using that $\mathcal{P}_{\Delta_n}$ vanishes on $\mathcal{H}$, one obtains 
\begin{equation}\label{jacobi2}
\mathcal{K}(U) = -\mathcal{P}_{\Delta_n}\bigl([S,{\rm H}(U)]\bigr). 
\end{equation}

\item In cases (I) and (II), given vector fields $U,V$ tangent to $\Delta_n$, then
\begin{equation}\label{wronskkian2}
\mathcal{W}(U,V)=\omega(U,[S,V]). 
\end{equation}
\end{enumerate}

\subsection{The Jacobi curves associated to sprays and Finsler metrics}\label{subsectionjacobisprays}

Let us now come back to the moving planes from Example \ref{examplesmovingplanes}. Throughout this section, let $S$ be fixed
a spray on $M^n$.

\begin{lemma}
The moving plane $(\ref{movingplanespray})$ is regular. 
\end{lemma}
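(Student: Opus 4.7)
The plan is to use Lemma \ref{lemmaalmosttangent} as the central tool, since it is precisely designed to link vertical vector fields with their brackets against a SODE via the almost-tangent structure $\mathcal{J}$. Regularity of $\mathscr{P}$ amounts to showing that, for any local frame $U_1,\dots,U_n$ of $\mathcal{V}TM$, the $2n$ vector fields $U_1,\dots,U_n,[S,U_1],\dots,[S,U_n]$ are pointwise linearly independent on $TM\backslash 0$; since this set has the correct cardinality $2n=\dim T(TM\backslash 0)$, independence is equivalent to being a frame.

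The verification is a one-line argument. Suppose at some $w\in TM\backslash 0$ we have a relation
\begin{equation}\nonumber
\sum_i a_i U_i(w) + \sum_i b_i [S,U_i](w) = 0.
\end{equation}
Apply $\mathcal{J}$. Since $\mathcal{V}TM$ is the kernel of $\mathcal{J}$, the first sum dies. For the second, note that $[S,U_i]=-[U_i,S]$ and that Lemma \ref{lemmaalmosttangent} gives $\mathcal{J}([U_i,S])=U_i$, hence $\mathcal{J}([S,U_i])=-U_i$. The relation reduces to $\sum_i b_i U_i(w)=0$, and since the $U_i$ are independent at $w$, each $b_i$ vanishes. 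Plugging back, $\sum_i a_i U_i(w)=0$ forces the $a_i$ to vanish as well.

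There is essentially no obstacle: the whole content is Lemma \ref{lemmaalmosttangent} together with the observation that $\ker\mathcal{J}=\mathcal{V}TM$, so that $\mathcal{J}$ cleanly separates the ``vertical'' part of the frame from the ``bracketed'' part. Notably, the argument uses only that $S$ is a SODE; the full spray hypothesis $[C,S]=S$ plays no role in regularity, which is consistent with the fact that the fanning property of the associated Jacobi curves is a non-degeneracy condition that survives under this weaker assumption.
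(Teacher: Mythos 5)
Your argument is correct and is essentially the paper's own proof: both apply $\mathcal{J}$ to a linear dependence relation and use Lemma \ref{lemmaalmosttangent} (in the form $\mathcal{J}(U_i)=0$, $\mathcal{J}([S,U_i])=-U_i$) to kill the vertical part and recover independence of the coefficients. Your closing remark that only the SODE property is used is also consistent with the paper, since Lemma \ref{lemmaalmosttangent} is stated for SODEs.
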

\begin{proof}
Let $X_1,\cdots,X_n$ be a local frame for $\mathcal{V}TM$. Since the almost-tangent structure $\mathcal{J}$ satisfies (cf. Lemma \ref{lemmaalmosttangent}) 
\begin{equation}\label{almosttangentstructureequation}
\mathcal{J}(X_i)=0~,~~\mathcal{J}([S,X_i])=-X_i,
\end{equation}
a linear dependence relation among $X_1,\cdots,X_n,[S,X_1],\cdots,[S,X_n]$ would give a linear dependence relation among $X_1,\cdots,X_n$.
\end{proof}
Let, therefore, $\mathcal{F}$, $\mathcal{H}$, $\mathcal{K}$ be the corresponding differential invariants of $\mathscr{P}$. From 
(\ref{almosttangentstructureequation}) and (\ref{fundamentalendomorphismmovingplane}) we obtain
\begin{equation}\label{fundamentalendoalmosttangent}
\mathcal{F}=-\mathcal{J}. 
\end{equation}
In particular, since $[S,\mathcal{F}]$ consists of reflections across $\mathcal{H}$, we recover the following result \cite[Prop. I.41]{Grifone}.

\begin{corollary}\label{corollaryfundamentalendomorphism}
The Lie derivative $\Gamma_S:=-[S,\mathcal{J}]$ is a section of reflections of ${\rm End}(T(TM\backslash 0))\rightarrow TM\backslash 0$ such that
${\rm ker}(\Gamma_S+{\bf I})=\mathcal{V}TM$. 
\end{corollary}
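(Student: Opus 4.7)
The plan is to obtain the corollary as a direct consequence of the general theory of fanning curves applied to the particular moving plane $(\ref{movingplanespray})$; essentially all the work has already been done. First I would rewrite $\Gamma_S$ in terms of the fundamental endomorphism of the Jacobi curve: by $(\ref{fundamentalendoalmosttangent})$, $\mathcal{F} = -\mathcal{J}$, so
\[
\Gamma_S \;=\; -[S,\mathcal{J}] \;=\; [S,\mathcal{F}].
\]
From this, everything reduces to understanding $[S,\mathcal{F}]$.

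Next I would invoke item (2) of $\S\ref{sectionliebrackets}$, which states that $[S,\mathcal{F}]$ is a section of reflections across $\mathcal{H}$. This is the pointwise/infinitesimal shadow of the general fact recalled in $\S\ref{fanningcurvessubsection}$: for a fanning curve $\ell(t)$, the derivative $\dot{\mathbf{F}}(t)$ is a reflection with $(-1)$-eigenspace $\ell(t)$ and $(+1)$-eigenspace $h(t)$. In particular, $[S,\mathcal{F}]^2 = \mathbf{I}$ as a section of $\mathrm{End}(T(TM\setminus 0))$, which gives the first half of the statement, namely that $\Gamma_S$ is a section of reflections.

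For the identification of the eigenspace, I would use the second lemma of Section \ref{sectionmovingplanes} (the correspondence between the pointwise invariants of the moving plane and the invariants of the Jacobi curve under the flow isomorphism): at each $w \in TM\setminus 0$, the $(-1)$-eigenspace of $[S,\mathcal{F}]_w$ equals $\ell_w(0) = \Delta_n(w) = \mathcal{V}_w TM$. Hence $\ker(\Gamma_S + \mathbf{I}) = \mathcal{V}TM$, as claimed.

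The argument is essentially bookkeeping; the only step requiring a moment's care is the sign convention, since $\mathcal{J}$ satisfies $\mathcal{J}([S,X_i]) = -X_i$ whereas the fundamental endomorphism satisfies $\mathcal{F}([S,U_i]) = +U_i$, producing the sign in $\mathcal{F} = -\mathcal{J}$ and, consequently, in $\Gamma_S = -[S,\mathcal{J}]$. Once this is tracked, the conclusion is immediate and there is no real obstacle; the substantive content lies in the earlier results on fanning curves rather than in this corollary itself.
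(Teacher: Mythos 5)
Your proposal is correct and follows essentially the same route as the paper: the paper also derives the corollary directly from the identity $\mathcal{F}=-\mathcal{J}$ in (\ref{fundamentalendoalmosttangent}) together with the fact from $\S$\ref{sectionliebrackets} that $[S,\mathcal{F}]$ is a section of reflections whose $(-1)$-eigenspace is $\Delta_n=\mathcal{V}TM$ (the pointwise version of $\dot{\bf F}(t)$ being a reflection with $(-1)$-eigenspace $\ell(t)$). Your sign bookkeeping matches the paper's conventions, so nothing is missing.
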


The section $\Gamma_S$ is an example of a {\it connection} on $M$ in the sense of Grifone (cf. \cite[Def. I.14]{Grifone}); indeed, $\Gamma_S$ is the canonical connection
associated to the spray $S$. The corresponding Ehresmann connection on $TM\backslash 0$, given by the 1-eigenspaces of $\Gamma_S$, is the so-called
{\it horizontal tangent bundle} (associated
to $S$), $\mathcal{H}TM={\rm ker}([S,\mathcal{J}]-{\bf I})$, so that
\begin{equation}\label{horizontaltangentspacedecomposition}
T(TM\backslash 0) = \mathcal{H}TM\oplus\mathcal{V}TM. 
\end{equation}
Therefore, we have recovered $\mathcal{H}TM$ as the horizontal distribution $\mathcal{H}$ of $\mathscr{P}$,
\begin{equation}
\mathcal{H}TM=\mathcal{H}, 
\end{equation}
and we can unambiguously denote by $\mathcal{P}_\mathcal{H}$ and $\mathcal{P}_\mathcal{V}$ the projections relative to (\ref{horizontaltangentspacedecomposition}).
Note that the homogeneity $[C,S]=S$ of $S$ implies that $S$ is tangent to $\mathcal{H}TM$.

\vskip 5pt

\noindent In terms of Jacobi curves: fixing a non-zero vector $v\in T_mM$, let $\gamma:I\subseteq\mathds{R}\rightarrow M$ be the geodesic of $S$ with $\dot{\gamma}(0)=v$, 
and let
\begin{equation}\nonumber
\ell_v:I\subseteq\mathds{R}\rightarrow {\rm Gr}_n(T_vTM) 
\end{equation}
be the Jacobi curve of $\mathscr{P}$ based at $v$. We have shown that

\begin{proposition}
Under the isomorphism ${\rm d}\Phi_t^S:T_vTM\rightarrow T_{\dot{\gamma}(t)}M$, the endomorphism $-\mathcal{J}_{\dot{\gamma}(t)}$ corresponds to ${\bf F}_v(t)$ and, thus, 
$(\Gamma_S)_{\dot{\gamma}(t)}$ corresponds to $\dot{\bf F}_v(t)$. Therefore, $\mathcal{H}_{\dot{\gamma}(t)}TM={\rm d}\Phi_t^S(v)h_v(t)$.
\end{proposition}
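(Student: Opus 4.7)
The proposition is a direct consequence of results already assembled in the section, and the plan is essentially to combine two facts: the identification $\mathcal{F}=-\mathcal{J}$ from equation~(\ref{fundamentalendoalmosttangent}), and the general transport lemma that says invariants evaluated at $\Phi_t(x)$ correspond to Jacobi-curve invariants at time $t$ via ${\rm d}\Phi_t(x)$. The structure of the proof would therefore be: (i) get the statement about $-\mathcal{J}_{\dot{\gamma}(t)}\leftrightarrow {\bf F}_v(t)$ as an immediate specialization, (ii) differentiate to pass to $\Gamma_S$ and $\dot{\bf F}_v(t)$, (iii) read off the horizontal distribution as the corresponding eigenspace.

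First I would apply the general lemma relating $\mathcal{F}_{\Phi_t(x)}$ to ${\bf F}_x(t)$, specialized to the moving plane $\mathscr{P}=(T(TM\backslash 0),\mathcal{V}TM,\Phi_t^S)$ and the point $x=v$. Since $\Phi_t^S(v)=\dot{\gamma}(t)$ and, by~(\ref{fundamentalendoalmosttangent}), $\mathcal{F}=-\mathcal{J}$, the lemma gives ${\rm d}\Phi_t^S(v)\circ{\bf F}_v(t)=(-\mathcal{J}_{\dot{\gamma}(t)})\circ{\rm d}\Phi_t^S(v)$, which is the first assertion.

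For the second assertion, I would use the formula $\frac{\rm d}{{\rm d}t}(\Phi_t)^*\mathcal{T}=(\Phi_t)^*[S,\mathcal{T}]$ from $\S$\ref{sectionliebrackets}, applied to $\mathcal{T}=\mathcal{F}=-\mathcal{J}$. Writing ${\bf F}_v(t)=\bigl({\rm d}\Phi_t^S(v)\bigr)^{-1}\circ\mathcal{F}_{\dot{\gamma}(t)}\circ{\rm d}\Phi_t^S(v)$ and differentiating, the chain rule yields ${\rm d}\Phi_t^S(v)\circ\dot{\bf F}_v(t)=[S,\mathcal{F}]_{\dot{\gamma}(t)}\circ{\rm d}\Phi_t^S(v)$. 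Since $[S,\mathcal{F}]=-[S,\mathcal{J}]=\Gamma_S$, this is the claimed intertwining between $\dot{\bf F}_v(t)$ and $(\Gamma_S)_{\dot{\gamma}(t)}$.

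Finally, the identification of horizontal distributions is purely formal once the eigenspace description is in place. The horizontal subspace $h_v(t)$ is by definition the $+1$-eigenspace of the reflection $\dot{\bf F}_v(t)$, while $\mathcal{H}_{\dot{\gamma}(t)}TM$ is the $+1$-eigenspace of $\Gamma_S$ at $\dot{\gamma}(t)$; the intertwining from step (ii) sends one eigenspace to the other, giving $\mathcal{H}_{\dot{\gamma}(t)}TM={\rm d}\Phi_t^S(v)h_v(t)$. I do not foresee any real obstacle: the only thing to keep track of is the sign in $\mathcal{F}=-\mathcal{J}$ (so that $[S,\mathcal{F}]$ is $+\Gamma_S$ rather than $-\Gamma_S$) and the orientation of the isomorphism ${\rm d}\Phi_t^S(v)$, which transports fanning-curve data at $v$ to tensorial data at $\dot{\gamma}(t)$ rather than the reverse.
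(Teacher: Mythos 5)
Your proposal is correct and follows essentially the same route as the paper, which states this proposition as a summary ("We have shown that") of exactly the ingredients you use: the transport lemma along orbits identifying $\mathcal{F}_{\Phi_t(x)}$ with ${\bf F}_x(t)$, the identity $\mathcal{F}=-\mathcal{J}$, and the identification of $\Gamma_S=[S,\mathcal{F}]$ as the reflection whose $+1$-eigenspaces give $\mathcal{H}TM=\mathcal{H}$. Your explicit differentiation via $\frac{\rm d}{{\rm d}t}(\Phi_t)^*\mathcal{T}=(\Phi_t)^*[S,\mathcal{T}]$ is just a spelled-out version of the paper's appeal to the transport lemma, so there is no substantive difference.
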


Next we show how the notions of covariant derivative and curvature endomorphism along $\gamma$, associated to $S$, can be recovered in this setting. We refer the reader to
$\S$\ref{sectionconnectionsandcurvature} for the definitions of those concepts as well as for the proofs of the following results.
\par Consider, for each $t$, the isomorphism
\begin{equation}\label{isomorphism3}
\iota_{v,t}:={i_{\dot{\gamma}(t)}}^{-1}\circ {\rm d}\hskip 1pt \Phi_t^{S}(v)~:~\ell_v(t)\longrightarrow T_{\gamma(t)}M. 
\end{equation}
\noindent For $t=0$ this is just the tautological isomorphism $i_v:\ell_v(0)=\mathcal{V}_vTM\rightarrow T_mM$.

\begin{proposition}\label{propjacobicurvatureendo}
The endomorphisms ${\bf K}_v(t)|_{\ell_v(t)}:\ell_v(t)\rightarrow\ell_v(t)$ and ${\bf R}_{\dot{\gamma}(t)}:T_{\gamma(t)}M\rightarrow T_{\gamma(t)}M$ correspond
under $(\ref{isomorphism3})$. 
\end{proposition}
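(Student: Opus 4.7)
The plan is to reduce the identification of $\mathbf{K}_v(t)|_{\ell_v(t)}$ with $\mathbf{R}_{\dot\gamma(t)}$ to the Lie-bracket expression (\ref{jacobi2}), namely $\mathcal{K}(U)=-\mathcal{P}_{\mathcal{V}}([S,\mathrm{H}(U)])$, and then match each piece of this formula with the dynamical description of the Levi-Civita-like covariant derivative and curvature endomorphism attached to a spray that will be set up in $\S$\ref{sectionconnectionsandcurvature}. Since the isomorphism $\iota_{v,t}$ only differs from the evaluation of vertical fields at $\dot\gamma(t)$ by the pull-back ${\mathrm{d}\Phi_t^S}$, and since by the last lemma of $\S$\ref{fanningcurvessubsection} the object $\mathcal{K}$ transports to ${\bf K}_v(t)$ under ${\mathrm{d}\Phi_t^S}$, it suffices to verify the identification at $t=0$.

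First I would fix an arbitrary vector $Y_0\in T_mM$, take a local vertical vector field $U$ on $TM\setminus 0$ such that $U_v=i_v(Y_0)$, and unwrap the definition of $\mathrm{H}(U)=\mathcal{P}_{\mathcal{H}}([S,U])$. Using the decomposition (\ref{horizontaltangentspacedecomposition}) and the fact (Corollary \ref{corollaryfundamentalendomorphism}) that $\mathcal{H}TM$ is the $+1$-eigenspace of $\Gamma_S=-[S,\mathcal{J}]$, I would identify $\mathrm{H}(U)_v$ with the horizontal lift (in the sense of the Grifone connection associated to $S$) of the covariant derivative $\nabla_{\dot\gamma}Y|_{t=0}$, for $Y$ any field along $\gamma$ whose vertical lift agrees with $U$ along the orbit. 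This is the point where the nonlinear connection produced by the spray enters and where one needs the definitions to be given in $\S$\ref{sectionconnectionsandcurvature}: the horizontal component of the Lie bracket of $S$ against a vertical lift is precisely what encodes $\nabla$.

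Next I would apply the second bracket. The right-hand side of (\ref{jacobi2}) is $-\mathcal{P}_{\mathcal{V}}([S,\mathrm{H}(U)])$, and under $i^{-1}$ the vertical component of the Lie bracket of $S$ against a horizontal lift $Z^h$ is exactly the curvature endomorphism $\mathbf{R}_{\dot\gamma}(Z)$ evaluated at the basepoint, in the sense of the dynamical curvature attached to the Ehresmann connection $\mathcal{H}TM$ (this is the direct analogue of $R(X,Y)=-\mathcal{P}_{\mathcal{V}}[X^h,Y^h]_{\text{vertical part along fibres}}$). Combining the two steps yields $i_v\circ \mathcal{K}(U)_v = \mathbf{R}_{\dot\gamma(0)}(Y_0)$, which is exactly the claim at $t=0$; propagating along the flow via the compatibility lemma finishes the proof.

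The main obstacle is Step~2/3: cleanly matching the iterated bracket $\mathcal{P}_{\mathcal{V}}([S,\mathcal{P}_{\mathcal{H}}[S,U]])$ with the dynamical curvature without accidentally picking up a $\nabla_{\dot\gamma}\nabla_{\dot\gamma}Y$ term. This is handled by the fact that $\mathrm{H}$ (and thus the two successive brackets) is computed pointwise at $v$, so the extension of $Y_0$ off the point is irrelevant once the projections are taken; in other words, the obstruction amounts to checking that the tensorial definition of $\mathbf{R}_{\dot\gamma}$ in $\S$\ref{sectionconnectionsandcurvature} is indeed the one that comes out of $-\mathcal{P}_{\mathcal{V}}\circ\mathrm{ad}(S)\circ\mathcal{P}_{\mathcal{H}}\circ\mathrm{ad}(S)$ applied to a vertical lift, which is a direct bookkeeping exercise once the horizontal-lift formula for $\nabla$ is in place.
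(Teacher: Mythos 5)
Your reduction to $t=0$ via flow-equivariance and your use of (\ref{jacobi2}) match the paper's set-up, but the central identification in your first step is wrong. The horizontal derivative is tensorial: ${\rm H}(U)=\mathcal{P}_\mathcal{H}([S,U])$ is $C^\infty$-linear in the vertical field $U$ (the extra term $S(f)U$ in $[S,fU]$ is vertical and is killed by $\mathcal{P}_\mathcal{H}$), so ${\rm H}(U)_v$ depends only on the value $U_v=i_v(Y_0)$ and never on the $1$-jet of a field $Y$ along $\gamma$; it therefore cannot be the horizontal lift of $\nabla_{\dot{\gamma}}Y|_{t=0}$, and your own closing remark that ``the extension of $Y_0$ off the point is irrelevant'' contradicts that identification. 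What is actually true --- and this is where the almost-tangent structure enters in the paper --- is ${\rm H}(V^\mathfrak{v})=-V^\mathfrak{h}$: since $V^\mathfrak{h}$ is $\pi$-related to $V$ one has $\mathcal{J}(V^\mathfrak{h})=V^\mathfrak{v}$, and ${\rm H}^{-1}=-\mathcal{J}|_{\mathcal{H}TM}$ by (\ref{inversehorizontalderivative}) and (\ref{fundamentalendoalmosttangent}); thus the horizontal derivative returns (minus) the horizontal lift of the vector itself. The covariant derivative sits in the \emph{vertical} component of $[S,U]$ --- that is exactly Proposition \ref{propdynamicalcovariantderivative} --- not in the horizontal one. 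With your identification, combining your two steps would yield $\iota_{v,0}\bigl(\mathcal{K}(U)_v\bigr)=-{\bf R}_{\dot{\gamma}(0)}\bigl(\nabla_{\dot{\gamma}}Y|_0\bigr)$ rather than ${\bf R}_{\dot{\gamma}(0)}(Y_0)$, so the argument as written does not prove the proposition.

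There is also a secondary gap in your second step: you treat ``the vertical part of $[S,Z^\mathfrak{h}]$ equals $i\bigl({\bf R}_{\dot{\gamma}}(Z)\bigr)$'' as essentially definitional, invoking the Ehresmann-curvature formula. In the paper, however, ${\bf R}_w$ is defined through the curvature tensor $\mathcal{R}$ of a linear connection $\nabla$ on the vertical bundle satisfying {\bf L.} and {\bf T.} (formula (\ref{curvaturetensor})), so this equality must be derived: one computes ${\bf R}_{\dot{\gamma}(t)}(V)={i_{\dot{\gamma}(t)}}^{-1}\mathcal{R}(S,V^\mathfrak{h})C$ and uses {\bf L.} to get $\nabla_S C=\nabla_{V^\mathfrak{h}}C=0$ and $\nabla_{[S,V^\mathfrak{h}]}C=\mathcal{P}_\mathcal{V}([S,V^\mathfrak{h}])$. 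That short computation, together with the corrected identity ${\rm H}(V^\mathfrak{v})=-V^\mathfrak{h}$ (which turns (\ref{jacobi2}) into $\mathcal{K}(V^\mathfrak{v})=\mathcal{P}_\mathcal{V}([S,V^\mathfrak{h}])$), is the whole proof; as it stands, your proposal replaces both halves by assertions, one of which is false.
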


It therefore follows from Proposition \ref{propjacobiendo} that, given a frame $V_1,\cdots,V_n\in\mathfrak{X}(\gamma)$, if $\mathcal{A}(t)$ is the corresponding frame for $\ell_v(t)$, then
the matrix of ${\bf R}_{\dot{\gamma}(t)}$ with respect to that frame is $(1/2)\{\mathcal{A}(t),t\}$.

\begin{proposition}\label{propdynamicalcovariantderivative}
Given $V\in\mathfrak{X}(\gamma)$, let $a(t)\in\ell_v(t)$ correspond to $V$ via $(\ref{isomorphism3})$. Then ${\rm D}^{\dot{\gamma}}V/{\rm d}t$ corresponds to
${\bf P}_{\ell_v}(t)\dot{a}(t)$ via $(\ref{isomorphism3})$.
\end{proposition}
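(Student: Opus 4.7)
My plan is to verify the stated identification at $t=0$, then bootstrap to arbitrary $t$ via the flow-invariance of both sides. At $t=0$ the isomorphism $\iota_{v,0}$ reduces to $i_v^{-1}$, the Jacobi curve passes through $\mathcal{V}_vTM$, and the projection $\mathbf{P}_{\ell_v}(0)$ is nothing but $\mathcal{P}_\mathcal{V}$ with respect to the decomposition (\ref{horizontaltangentspacedecomposition}). So the task becomes: show that $i_v^{-1}\bigl(\mathcal{P}_\mathcal{V}\dot a(0)\bigr)=(D^{\dot\gamma}V/{\rm d}t)(0)$.

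To compute $\dot a(0)$, I would extend $V$ to a time-dependent vector field $\tilde V_t$ on a neighborhood of $\gamma(0)$ in $M$ with $\tilde V_t(\gamma(t))=V(t)$ (in natural coordinates one may simply take $\tilde V_t=V^i(t)\partial/\partial x^i$). Its vertical lift $(\tilde V_t)^{\mathfrak v}$ is a time-dependent vertical field on $TM\backslash 0$ satisfying $(\tilde V_t)^{\mathfrak v}(\dot\gamma(t))=i_{\dot\gamma(t)}V(t)$, so that
\begin{equation}\nonumber
a(t) = \bigl((\Phi^S_t)^*(\tilde V_t)^{\mathfrak v}\bigr)(v).
\end{equation}
Applying the standard formula $\frac{\rm d}{{\rm d}t}(\Phi^S_t)^* X_t = (\Phi^S_t)^*\bigl(\partial_t X_t+[S,X_t]\bigr)$ and evaluating at $t=0$ gives
\begin{equation}\nonumber
\dot a(0) = \bigl(\partial_t|_{t=0}(\tilde V_t)^{\mathfrak v}\bigr)(v) + [S,(\tilde V_0)^{\mathfrak v}](v).
\end{equation}
The first summand is already vertical, hence unaffected by $\mathcal{P}_\mathcal{V}$, and equals $i_v$ of the $\partial_t$-derivative of the coefficients of $\tilde V_t$ at $\gamma(0)$.

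The key calculational step is therefore to identify $\mathcal{P}_\mathcal{V}[S,(\tilde V_0)^{\mathfrak v}](v)$ with the connection term of $D^{\dot\gamma}V/{\rm d}t$. For this I would use Lemma \ref{lemmaalmosttangent}, which guarantees $\mathcal{J}[S,(\tilde V_0)^{\mathfrak v}]=-(\tilde V_0)^{\mathfrak v}$, together with the characterization of $\mathcal{H}TM$ as the $+1$-eigenbundle of $\Gamma_S=-[S,\mathcal{J}]$ from Corollary \ref{corollaryfundamentalendomorphism}. In natural coordinates this reduces to the computation $[S,\partial/\partial y^i]=-\partial/\partial x^i+2(\partial G^j/\partial y^i)\partial/\partial y^j$ and the fact that $\mathcal{P}_\mathcal{V}(\partial/\partial x^i)=(\partial G^j/\partial y^i)\partial/\partial y^j$, both of which are immediate from (\ref{SODE}). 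The result is the local coordinate expression for the Ehresmann-covariant derivative along $\gamma$ with connection coefficients $N^j_i(v)=\partial G^j/\partial y^i|_v$.

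Finally, to pass from $t=0$ to arbitrary $t$, I would apply the $t=0$ case to the shifted geodesic $s\mapsto\gamma(t+s)$ and vector field $s\mapsto V(t+s)$; the Jacobi curve based at $\dot\gamma(t)$ is ${\rm d}\Phi^S_t(v)$-related to the Jacobi curve based at $v$ via the shift, and both the decomposition $\ell\oplus h$ and the section $a$ transport correspondingly, while $D^{\dot\gamma}V/{\rm d}t$ is manifestly reparametrization-covariant under the flow. I expect the main obstacle to be not the identification itself but the bookkeeping of the various identifications in this final transport step, and making sure the coordinate calculation in the preceding paragraph is reconciled with the intrinsic definition of $D^{\dot\gamma}V/{\rm d}t$ given in $\S$\ref{sectionconnectionsandcurvature}.
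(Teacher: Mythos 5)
Your $t=0$ computation and the final flow-transport step are both sound: the identifications $\iota_{v,0}=i_v^{-1}$, ${\bf P}_{\ell_v}(0)=\mathcal{P}_\mathcal{V}$, the formula $\dot a(0)=\partial_t|_{0}(\tilde V_t)^{\mathfrak v}(v)+[S,(\tilde V_0)^{\mathfrak v}](v)$, and the bookkeeping via the shift lemma ${\rm d}\Phi_t(x)\ell_x(s)=\ell_{\Phi_t(x)}(s-t)$ together with Proposition \ref{transformationproperties} all check out. The genuine gap is exactly the point you defer at the end: you never reconcile your coordinate expression $\dot V^j+N^j_i(\dot\gamma)V^i$ with the definition of ${\rm D}^{\dot\gamma}V/{\rm d}t$ actually given in $\S$\ref{sectionconnectionsandcurvature}, which is $i_{\dot\gamma}^{-1}(\nabla V^{\mathfrak v}/{\rm d}t)$ for an arbitrary linear connection $\nabla$ on the vertical bundle subject to {\bf L.} and {\bf T.} Your argument nowhere uses {\bf L.} or {\bf T.}, yet these are indispensable: for a connection violating {\bf T.} along $S$ the quantity $i_{\dot\gamma}^{-1}\nabla_S V^{\mathfrak v}$ is simply a different operator, so the claimed equality cannot follow from the spray data (\ref{SODE}) alone. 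The missing step is to show that the conditions {\bf L.} and {\bf T.} force $\nabla_S V^{\mathfrak v}=\mathcal{J}([S,V^{\mathfrak h}])$ (substitute $\mathcal{J}(S)=C$, $\mathcal{J}(V^{\mathfrak h})=V^{\mathfrak v}$ and $\nabla_{V^{\mathfrak h}}C=0$ into ${\rm T}(S,V^{\mathfrak h})=0$), after which either a one-line coordinate check of $\mathcal{J}([S,V^{\mathfrak h}])$ closes your argument, or one can bypass coordinates entirely.

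That second option is what the paper does, and it is worth comparing: instead of working at $t=0$ and transporting, the paper proves the identity along the whole curve at once by pulling $\nabla_SV^{\mathfrak v}=\mathcal{J}([S,V^{\mathfrak h}])$ back by ${\rm d}\Phi_{-t}$, using ${\rm d}\Phi_{-t}(\Phi_t(v))V^{\mathfrak h}=-{\bf H}_v(t)a(t)$ and the relation ${\bf F}_v(t){\bf H}_v(t)={\bf I}|_{\ell_v(t)}$ from (\ref{inversehorizontalderivative}) to land directly on ${\bf P}_{\ell_v}(t)\dot a(t)$; no coordinates and no transport step are needed. Your route is perfectly viable and arguably more concrete, but as written it proves the proposition only for the specific nonlinear-connection covariant derivative $\dot V+N(\dot\gamma)V$, not for the object the paper defines; supply the torsion argument (or verify the coordinate formula for one admissible $\nabla$, e.g. Berwald, \emph{and} invoke the paper's Proposition-Definition that the result is $\nabla$-independent) and the proof is complete.
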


\subsubsection{The case of a Finsler metric}\label{sectionjacobicurvesfinsler}

Let us now suppose that $S$ is the geodesic spray of a Finsler metric $F$ on $M$.
\par In this case, $\ell_v(t)$ takes values in $\Lambda(T_vTM)$ if we regard
$\mathscr{P}$ as of type (I) with respect to $\omega_F$.

\begin{proposition}\label{propwronskianfundamentaltensor}
The Wronskian $W_v(t)$ of $\ell_v(t)$ corresponds, under $(\ref{isomorphism3})$, to the fundamental tensor $g_F(\dot{\gamma}(t))$ of $F$ at $\dot{\gamma}(t)$.
\end{proposition}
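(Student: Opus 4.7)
The plan is to reduce to $t=0$ via flow-equivariance and then verify the identification at the base point by a short Cartan-formula computation that turns on the fiber-derivative of the Legendre transform.

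First, I would invoke the equivariance lemma preceding \S\ref{sectionliebrackets}: it asserts that $W_v(t)$ corresponds to $W_{\dot{\gamma}(t)}(0)$ under ${\rm d}\Phi_t^{S_F}(v)|_{\ell_v(t)}$. Since $\iota_{v,t}=i_{\dot{\gamma}(t)}^{-1}\circ {\rm d}\Phi_t^{S_F}(v)$, the Proposition reduces to the base-point statement: for every $w\in TM\setminus 0$ and every $U_w,V_w\in\mathcal{V}_wTM$,
\begin{equation}\nonumber
W_w(0)(U_w,V_w)=g_F(w)\bigl(i_w^{-1}(U_w),i_w^{-1}(V_w)\bigr).
\end{equation}

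By (\ref{wronskkian2}), $W_w(0)(U_w,V_w)=\omega_F(U,[S_F,V])|_w$ for any vertical extensions $U,V$. Using that $\mathcal{V}TM$ is Lagrangian for $\omega_F$ together with the Leibniz rule $[S_F,fV]=S_F(f)V+f[S_F,V]$, this expression is $C^\infty(TM\setminus 0)$-linear in $V$ (and trivially in $U$), so I may assume $U=u^{\mathfrak{v}}$ and $V=v^{\mathfrak{v}}$ are vertical lifts of vector fields $u,v$ on $M$ with $u_{\pi(w)}=i_w^{-1}(U_w)$ and $v_{\pi(w)}=i_w^{-1}(V_w)$. Applying Cartan's formula to $\omega_F=-{\rm d}\alpha_F$, the term containing $\alpha_F(U)$ vanishes since $(\alpha_F)_z(X)=g_F(z)(z,{\rm d}\pi X)$ annihilates vertical vectors, and the term $\alpha_F([U,[S_F,V]])$ vanishes since $[u^{\mathfrak{v}},v^{\mathfrak{v}}]=0$ and a direct check in natural coordinates shows $[U,[S_F,V]]$ is vertical. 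For the surviving term, Lemma \ref{lemmaalmosttangent} gives $\mathcal{J}([S_F,V])=-V$, whence ${\rm d}\pi([S_F,V]_z)=-v_{\pi(z)}$ and
\begin{equation}\nonumber
\alpha_F([S_F,V])(z)=-g_F(z)\bigl(z,v_{\pi(z)}\bigr)=-\mathscr{L}_F(z)\bigl(v_{\pi(z)}\bigr).
\end{equation}
Differentiating this scalar function in the fiber direction $U_w$ via the fiber-derivative formula (\ref{fiberderivative}) produces $-g_F(w)(u_{\pi(w)},v_{\pi(w)})$, which after sign-tracking through $\omega_F=-{\rm d}\alpha_F$ is precisely the asserted right-hand side.

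The main (rather modest) obstacle is the last step: cleanly identifying $\alpha_F([S_F,V])$ with $-\mathscr{L}_F(\cdot)(v_{\pi(\cdot)})$ via Lemma \ref{lemmaalmosttangent} and then differentiating it along a fiber via (\ref{fiberderivative}). As a transparent alternative, the whole computation can be done in natural coordinates, where one finds $\omega_F=g_{ij}(x,y)\,{\rm d}x^i\wedge {\rm d}y^j+(\text{terms in }{\rm d}x^i\wedge {\rm d}x^j)$ and $[S_F,\partial/\partial y^j]=-\partial/\partial x^j+(\text{vertical})$, yielding $\omega_F(\partial/\partial y^i,[S_F,\partial/\partial y^j])|_w=g_{ij}(w)$ immediately---the only subtlety being the $0$-homogeneity and symmetry identity $\partial(g_{ij}y^j)/\partial y^k=g_{ik}$.
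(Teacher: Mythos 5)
Your proposal is correct and follows essentially the same route as the paper: reduce via flow-equivariance to the pointwise identity $\mathcal{W}(V^{\mathfrak{v}},U^{\mathfrak{v}})(w)=g_F(w)(V,U)$, expand $\omega_F(\cdot,[S_F,\cdot])$ by the invariant formula for ${\rm d}\alpha_F$, kill two terms using that $\alpha_F$ annihilates vertical vectors and that the iterated bracket is vertical, and identify the surviving term as the fiber derivative of $\mathscr{L}_F$ via Lemma \ref{lemmaalmosttangent} and (\ref{fiberderivative}). The explicit equivariance reduction, the tensoriality remark, and the coordinate alternative are harmless additions but not substantive departures.
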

\begin{proof}
This is equivalent to show that, given vector fields $U$, $V$ on $M$, then \newline 
$\mathcal{W}(V^\mathfrak{v},U^\mathfrak{v})(w)=g_F(w)(V,U)$, for $\mathcal{W}$
the section of ${\rm Bil}_{\rm sym}(\mathcal{V}TM)\rightarrow TM\backslash 0$ associated to $\mathscr{P}$.
On one hand, from (\ref{wronskkian2}) 
\begin{eqnarray}
\mathcal{W}(V^\mathfrak{v},U^\mathfrak{v})  & = & -{\rm d}\alpha_F\bigl(V^\mathfrak{v},[S,U^\mathfrak{v}]\bigr)\nonumber\\
& = & [S,U^\mathfrak{v}]\bigl(\alpha_F(V^\mathfrak{v})\bigr)-V^\mathfrak{v}\bigl(\alpha_F([S,U^\mathfrak{v}])\bigr)
-\alpha_F\bigl(\bigl[[S,U^\mathfrak{v}],V^\mathfrak{v}\bigr]\bigr).\nonumber
\end{eqnarray}
On the other hand, Lemma \ref{lemmaalmosttangent} implies that $[S,U^\mathfrak{v}]$ is $\pi$-related to $-U$ (i.e., $d\pi[S,U^\mathfrak{v}] = -U$). From this it follows that 
$[[S,U^\mathfrak{v}],V^\mathfrak{v}]$ is vertical and that $\alpha_F([S,U^\mathfrak{v}])$ is the function $u\mapsto-\mathscr{L}_F(u)U$.
Since $\alpha_F$ vanishes on vertical vectors and ${\rm d}_f\mathscr{L}_F(w)V=g_F(w)(V,\cdot)$ we therefore obtain 
$\mathcal{W}(U^\mathfrak{v},V^\mathfrak{v})(w)=-V^\mathfrak{v}\bigl(\alpha_F([S,U^\mathfrak{v}])\bigr)(w)=g_F(w)(V,U)$.
\end{proof}

As a corollary of this and Proposition \ref{propjacobicurvatureendo}, we get the flag curvature in terms of the Jacobi curve:

\begin{theorem}\label{theoremjacobiflag}
Given a $2$-plane $\Pi={\rm span}[v,u]$ in $T_{\gamma(t)}M$, with $g_F(v)(v,u)=0$, let
$a\in\ell(0)=\mathcal{V}_vTM$ be $a=i_v(u)$. Then,
\begin{equation}\nonumber
K_F(v,\Pi) = \frac{1}{F(v)^2}\frac{W_v(0)\bigl({\bf K}_v(0)a,a\bigr)}{W_v(0)(a,a)} 
\end{equation}
\end{theorem}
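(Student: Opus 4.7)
The plan is to recognize that this statement is essentially a direct translation, via the two identification propositions just established, of the standard definition of flag curvature into the language of Jacobi curves. Recall that the flag curvature of a Finsler metric is defined by
\begin{equation*}
K_F(v,\Pi) = \frac{g_F(v)\bigl(\mathbf{R}_v u, u\bigr)}{g_F(v)(v,v)\, g_F(v)(u,u) - g_F(v)(v,u)^2},
\end{equation*}
where $\mathbf{R}_v$ is the Jacobi (curvature) endomorphism at $v$. The normalization assumption $g_F(v)(v,u) = 0$ together with $g_F(v)(v,v) = F(v)^2$ collapses the denominator to $F(v)^2\, g_F(v)(u,u)$, so the whole theorem amounts to showing
\begin{equation*}
g_F(v)(\mathbf{R}_v u, u) = W_v(0)\bigl(\mathbf{K}_v(0)a, a\bigr) \quad \text{and}\quad g_F(v)(u,u) = W_v(0)(a,a).
\end{equation*}

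For the second identity, I would invoke Proposition \ref{propwronskianfundamentaltensor} at $t=0$: the isomorphism $\iota_{v,0}$ is precisely the tautological isomorphism $i_v : \mathcal{V}_v TM \to T_m M$, and by definition $a = i_v(u)$, so the Wronskian, which corresponds to $g_F(v)$ under $i_v$, gives $W_v(0)(a,a) = g_F(v)(u,u)$ immediately.

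For the first identity, I would apply Proposition \ref{propjacobicurvatureendo} at $t=0$, which asserts that $\mathbf{K}_v(0)|_{\ell_v(0)}$ and $\mathbf{R}_v$ correspond under $\iota_{v,0} = i_v$. Thus $\mathbf{K}_v(0) a = i_v(\mathbf{R}_v u)$, and then Proposition \ref{propwronskianfundamentaltensor} again converts the Wronskian pairing into the fundamental tensor pairing, yielding $W_v(0)(\mathbf{K}_v(0)a, a) = g_F(v)(\mathbf{R}_v u, u)$. Combining these two identifications and substituting into the normalized flag curvature formula completes the proof.

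There is no real obstacle: the substantive work has already been done in Propositions \ref{propjacobicurvatureendo} and \ref{propwronskianfundamentaltensor}. The only point requiring minor care is checking that $\mathbf{K}_v(0)a$ indeed lies in $\ell_v(0)$ so that the Wronskian can be evaluated on it; this is automatic since $\mathbf{K}_v(t)$ preserves the decomposition $V = \ell_v(t)\oplus h_v(t)$ (being the square of the reflection $\dot{\mathbf{F}}$, which interchanges these subspaces only after one derivative).
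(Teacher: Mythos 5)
Your proposal is correct and follows essentially the same route as the paper, which deduces the theorem directly as a corollary of Propositions \ref{propjacobicurvatureendo} and \ref{propwronskianfundamentaltensor} applied at $t=0$ (where $\iota_{v,0}=i_v$), substituted into the flag curvature formula with $g_F(v)(v,u)=0$ and $g_F(v)(v,v)=F(v)^2$. One small slip in your closing remark: ${\bf K}$ is the square of $(1/2)\ddot{\bf F}$, not of the reflection $\dot{\bf F}$; it is $\ddot{\bf F}$ that interchanges $\ell$ and $h$, so its square indeed preserves $\ell_v(0)$ as you need.
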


\noindent{\it The co-tangent setting.}
Let $\xi=\mathscr{L}_F(v)$ and let $\ell_\xi(t)\in\Lambda(T_\xi T^*M)$ be the Jacobi curve of $\mathscr{P}_*$ based at $\xi$.
With the help of the Legendre transformation $\mathscr{L}_F$, one obtains an isomorphism
\begin{equation}\label{isomorphism4}
\iota_{\xi,t}:=\bigl({\rm d}_f\mathscr{L}_F(\dot{\gamma}(t))\bigr)^{-1} \circ \bigl(i_{\mathscr{L}_F(\dot{\gamma}(t))}\bigr)^{-1}\circ 
{\rm d}\Phi_t^{S_F^*}(\xi)~:~ \ell_\xi(t)\longrightarrow T_{\gamma(t)}M. 
\end{equation}
Note from (\ref{fiberderivative}) that, for $t=0$, (\ref{isomorphism4}) is the inverse of
\begin{equation}\nonumber
T_mM\longrightarrow\ell_\xi(0)=\mathcal{V}_\xi T^*M,~~ w\mapsto i_\xi\bigl( g_F(v)(w,\hskip 1pt\cdot\hskip 1pt)\bigr).
\end{equation}
Now, since $\mathscr{L}_F$ is a symplectic diffeomorphism that maps the data in $\mathscr{P}$ to the ones in $\mathscr{P}_*$, then
\begin{equation}\label{isomorphismlegendre}
{\rm d}\mathscr{L}_F(v): T_vTM \rightarrow T_\xi T^*M 
\end{equation}
is a symplectic isomorphism mapping $\ell_v(t)$ to $\ell_\xi(t)$. In particular, it follows from Proposition \ref{transformationproperties} that
${\bf F}_v(t)$, $W_v(t)$, $h_v(t)$, ${\bf K}_v(t)$, correspond to ${\bf F}_\xi(t)$, $W_\xi(t)$, $h_\xi(t)$, ${\bf K}_\xi(t)$, 
under (\ref{isomorphismlegendre}). Therefore, $W_\xi(t)$ and ${\bf K}_\xi(t)|_{\ell_\xi(t)}$ correspond to $g_F(\dot{\gamma}(t))$ and ${\bf R}_{\dot{\gamma}(t)}$, respectively,
under (\ref{isomorphism4}).

\vskip 6pt

\noindent{\it The contact setting.} Suppose $F(v)=1$, hence $F^*(\xi)=1$, and let 
\begin{equation}\nonumber
\ell_v^{\rm c}(t)\in\Lambda\bigl({\rm ker}(\alpha_F)_v\bigr),~~~
\ell_\xi^{\rm c}(t)\in\Lambda\bigl({\rm ker}(\alpha)_\xi\bigr) 
\end{equation}
be the Jacobi curves of $\mathscr{P}^{\rm c}$ and $\mathscr{P}_*^{\rm c}$, based at $v$ and $\xi$, respectively.
Observe that $\mathscr{P}$ and $\mathscr{P}^{\rm c}$, as well as $\mathscr{P}_*$ and $\mathscr{P}_*^{\rm c}$, 
fit within the setting in Proposition \ref{propcontactrelation}. Therefore, since (\ref{tautologicalisomorphism}) maps
${\rm ker}\hskip 1pt g_F(w)(w,\cdot)$ onto $\mathcal{V}_w\Sigma_FM$ (for $w\in\Sigma_FM$), then (\ref{isomorphism3}) and (\ref{isomorphism4}) 
restrict to isomorphisms
\begin{eqnarray}
\iota_{v,t}|_{\ell_v^{\rm c}(t)}~:~\ell_v^{\rm c}(t) & \rightarrow & {\rm ker}\hskip 1pt g_F(\dot{\gamma}(t))(\dot{\gamma}(t)\hskip 1pt,\hskip 1pt\cdot) \label{isomorphism5}\\
\iota_{\xi,t}|_{\ell_\xi^{\rm c}(t)}~:~\ell_\xi^{\rm c}(t) & \rightarrow & {\rm ker}\hskip 1pt g_F(\dot{\gamma}(t))(\dot{\gamma}(t)\hskip 1pt, \hskip 1pt\cdot) \nonumber
\end{eqnarray}
under which $W_v^{\rm c}(t)$, $W_\xi^{\rm c}(t)$, and ${\bf K}_v^{\rm c}(t)$, ${\bf K}_\xi^{\rm c}(t)$, respectively, correspond to the restrictions of $g_F(\dot{\gamma}(t))$ and
${\bf R}_{\dot{\gamma}(t)}$ to ${\rm ker}\hskip 1pt g_F(\dot{\gamma}(t))(\dot{\gamma}(t)\hskip 1pt,\hskip 1pt\cdot)$. 
Also, the horizontal curves $h_v^{\rm c}(t)$, for $v\in\Sigma_FM$, give rise to the standard horizontal distribution on $\Sigma_FM$,
\begin{equation}\label{horizontalcontact}
\mathcal{H}\Sigma_FM = \mathcal{H}TM \cap {\rm ker}(\alpha_F). 
\end{equation}

\subsection{Invariants from the connections point of view}\label{sectionconnectionsandcurvature}

The linear connections arising in the theory of sprays and Finsler metrics are naturally defined on the vertical tangent bundle
(\ref{verticalbundle}). As shown in \cite{Henrique2}, the classical connections of Berwald, Cartan, Chern and Rund, and Hashiguchi are examples
of linear connections $\nabla$ on (\ref{verticalbundle}) satisfying the following two conditions (recall from Corollary \ref{corollaryfundamentalendomorphism}
the definition of $\Gamma_S$)
\begin{itemize}
\item[\bf L.] $\nabla$ is {\it lift} of the connection $\Gamma_S$, i.e. given $X\in T(TM\backslash 0)$, then
\begin{equation}
\nabla_XC = \mathcal{P}_\mathcal{V}(X).
\end{equation}
\item[\bf T.] ${\rm T}(S,X)=0$ for all $X\in T(TM\backslash 0)$; here, the {\it torsion} T of $\nabla$ is the $\mathcal{V}TM$-valued tensor field on $TM\backslash 0$
defined (in terms of vector fields) by
\begin{equation}
{\rm T}(X,Y) = \nabla_X\mathcal{J}(Y)-\nabla_Y\mathcal{J}(X)-\mathcal{J}([X,Y]). 
\end{equation}
\end{itemize}

On the other hand, the above conditions on a linear connection $\nabla$ guarantee that the covariant derivatives and the curvature endomorphism on $M$ induced by $\nabla$,
as defined next, are intrinsic to the spray $S$.

\subsubsection{The covariant derivative, the curvature endomorphism and the flag curvature}\label{subsubcovcurv}

Throughout this section, let  $\nabla$ be fixed a connection on (\ref{verticalbundle}) satisfying {\bf L.} and {\bf T.}. For a smooth curve 
$\gamma:I\subseteq\mathds{R}\rightarrow M$, we let $\mathfrak{X}(\gamma)$ denote the space of vector fields along $\gamma$.

\begin{definition}
Given a smooth curve $\gamma:I\subseteq\mathds{R}\rightarrow M$, with $\gamma(t_0)=m$, and non-null vector $w\in T_mM$, 
the map ${\rm D}^w/{\rm d}t:\mathfrak{X}(\gamma)\rightarrow T_mM$ is defined by
\begin{equation}\nonumber
\frac{{\rm D}^w V}{{\rm d}t} = {i_w}^{-1}\Bigl(\frac{\nabla V^\mathfrak{v}}{{\rm d}t}\Bigr)(t_0),
\end{equation}
where $V^\mathfrak{v}$ is the vertical lift of $V$ along the {\it horizontal lift} $\overline{\gamma}:I'\subseteq\mathds{R}\rightarrow TM\backslash 0$ of $\gamma$ through
$w$ at $t=t_0$ (i.e. $\overline{\gamma}$ is the lift of $\gamma$ that is tangent to $\mathcal{H}TM$ and $\overline{\gamma}(t_0)=w$).
\end{definition}

By considering a nowhere null vector field $W\in\mathfrak{X}(\gamma)$, one thus obtains a map 
${\rm D}^W/{\rm d}t:\mathfrak{X}(\gamma)\rightarrow\mathfrak{X}(\gamma)$
that satisfies the properties of a covariant derivative.  
\begin{propdef}
If $\gamma$ is a regular curve, then the map 
\begin{equation}
{\rm D}^{\dot{\gamma}}/{\rm d}t:\mathfrak{X}(\gamma)\rightarrow\mathfrak{X}(\gamma)
\end{equation}
does not depend on the 
choice of $\nabla$, but only on $S$. This is the covariant derivative along $\gamma$ associated to $S$.
\end{propdef}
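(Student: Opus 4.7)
The plan is to show that for two connections $\nabla$ and $\nabla'$ on $\mathcal{V}TM \to TM\setminus 0$ both satisfying \textbf{L.} and \textbf{T.}, the pointwise value of ${\rm D}^{\dot{\gamma}}V/{\rm d}t$ at $t_0$ computed from either is the same. Setting $A(X,Y) := \nabla_X Y - \nabla'_X Y$ defines a tensor field, $C^\infty(TM\setminus 0)$-bilinear in $X \in T(TM\setminus 0)$ and in the vertical section $Y$, so the difference of the two covariant derivatives at $t_0$ is the pointwise value $A(\dot{\overline{\gamma}}(t_0), V^{\mathfrak{v}}(t_0))$. Since $V^{\mathfrak{v}}(t_0) = i_w V(t_0) \in \mathcal{V}_w TM$ (with $w = \dot{\gamma}(t_0)$), it will suffice to prove that $A(S_w, Y) = 0$ for every $w \in TM\setminus 0$ and every $Y \in \mathcal{V}_w TM$, and then verify $\dot{\overline{\gamma}}(t_0) = S_w$.

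The vanishing of $A(S,\cdot)$ on vertical vectors will come from two short identities. From \textbf{L.}, $\nabla_X C = \nabla'_X C = \mathcal{P}_\mathcal{V}(X)$ yields $A(X, C) = 0$ for every $X$. Using $\mathcal{J}(S) = C$, the subtraction of ${\rm T}(S,X) = 0 = {\rm T}'(S,X)$ reads
\begin{equation*}
0 = A(S, \mathcal{J}(X)) - A(X, C),
\end{equation*}
so combined with the first identity one gets $A(S, \mathcal{J}(X)) = 0$ for every $X$. Since $\mathcal{J}$ has image $\mathcal{V}TM$, this is precisely $A(S_w, Y) = 0$ for every vertical $Y$, as desired.

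To close the proof I would verify $\dot{\overline{\gamma}}(t_0) = S_w$. Both vectors lie in $\mathcal{H}_w TM$: the left-hand side by the very definition of horizontal lift, and $S_w$ because the homogeneity $[C,S] = S$ forces $S$ to sit in the $1$-eigenspace of $\Gamma_S$, as noted right after (\ref{horizontaltangentspacedecomposition}). Both also project via ${\rm d}\pi(w)$ to $\dot{\gamma}(t_0) = w$: the former because $\pi \circ \overline{\gamma} = \gamma$, the latter because $S$ is a SODE, so $i_w({\rm d}\pi(w) S_w) = C_w = i_w(w)$ and hence ${\rm d}\pi(w) S_w = w$. Since ${\rm d}\pi$ restricts to an isomorphism $\mathcal{H}_w TM \to T_{\pi(w)} M$, the two vectors coincide, and applying ${i_w}^{-1}$ to $A(\dot{\overline{\gamma}}(t_0), V^{\mathfrak{v}}(t_0)) = A(S_w, i_w V(t_0)) = 0$ gives the claimed equality of covariant derivatives.

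The only conceptual subtlety is to recognize that one should not try to match $\dot{\overline{\gamma}}$ with $S \circ \overline{\gamma}$ along the whole curve, which would demand that $\gamma$ itself be a geodesic of $S$; the pointwise match at $t = t_0$ is all that is needed, and beyond this observation the argument is purely tensorial.
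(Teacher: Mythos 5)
Your proof is correct, and it is worth noting that it is more self-contained than what the paper actually offers for this statement: the paper asserts the independence from $\nabla$ (deferring the general theory to \cite{Henrique2}), and the only in-text justification is indirect, via Proposition \ref{propdynamicalcovariantderivative}, whose proof in $\S$\ref{sectionproofs} identifies ${\rm D}^{\dot{\gamma}}V/{\rm d}t$ with the manifestly connection-free quantity ${\bf P}_{\ell_v}(t)\dot{a}(t)$ --- but only along geodesics of $S$, where $\dot{\gamma}$ itself is the horizontal lift and one can write ${\rm D}^{\dot{\gamma}}V/{\rm d}t={i_{\dot{\gamma}}}^{-1}\nabla_S V^{\mathfrak{v}}$ along the whole curve. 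Your route is a direct difference-tensor computation: the identity $A(S,\mathcal{J}(X))=A(X,C)=0$, obtained by subtracting the two torsion conditions and using \textbf{L.}, is algebraically the same exploitation of \textbf{L.} and \textbf{T.} as the paper's derivation of $\nabla_SV^{\mathfrak{v}}=\mathcal{J}([S,V^{\mathfrak{h}}])$, but packaging it tensorially and observing that only the pointwise equality $\dot{\overline{\gamma}}(t_0)=S_{\dot{\gamma}(t_0)}$ is needed (which you verify correctly, using that $S$ is a horizontal SODE and that ${\rm d}\pi$ is injective on $\mathcal{H}TM$, and which crucially uses $w=\dot{\gamma}(t_0)$, i.e.\ the regularity hypothesis) lets you handle an arbitrary regular curve rather than only geodesics of $S$ --- which is exactly the generality the Proposition-Definition claims. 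You also implicitly use, correctly, that the horizontal lift $\overline{\gamma}$ and $\mathcal{P}_{\mathcal{V}}$ are intrinsic to $S$ (they come from $\Gamma_S$, not from $\nabla$), so the two computations differ only through $A$. In exchange, the paper's Jacobi-curve route yields the explicit formula ${\bf P}_{\ell_v}(t)\dot{a}(t)$ for the covariant derivative along geodesics, which is what the later sections actually use; your argument proves the uniqueness statement more elementarily and in full strength, but produces no such formula.
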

By using vertical and horizontal lift operations one can bring the curvature tensor of $\nabla$,
\begin{equation}\nonumber
\mathcal{R}(X,Y)Z = \nabla_{[X,Y]}Z - [\nabla_X,\nabla_Y]Z, 
\end{equation}
down to $M$ so as to define, for given $m\in M$ and non-null vector $w\in T_mM$, a tri-linear map $R_w:T_mM\times T_mM\times T_mM\rightarrow T_mM$ by
\begin{equation}\label{curvaturetensor}
R_w(u,v)z = {i_w}^{-1}\mathcal{R}(u^\mathfrak{h},v^\mathfrak{h})z^\mathfrak{v}, 
\end{equation}
where the vertical and horizontal lifts are at $w$. The following is a consequence of Proposition \ref{propjacobicurvatureendo} which we shall prove in $\S$\ref{sectionproofs}. 
\begin{propdef}
The endomorphism ${\bf R}_w:T_mM\rightarrow T_m M$ defined by ${\bf R}_w(v)=R_w(w, v)w$ does not depend on the choice of $\nabla$, but only on $S$. 
This is the curvature endomorphism of $S$ in the direction $w$.
\end{propdef}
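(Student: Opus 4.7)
The plan is to deduce this Proposition-Definition directly from Proposition \ref{propjacobicurvatureendo}, which identifies the curvature endomorphism with the restriction of the Jacobi endomorphism of the Jacobi curve $\ell_v(t)$ via the isomorphism (\ref{isomorphism3}). The whole strategy is to observe that every ingredient on the Jacobi-curve side of that identification is intrinsic to the spray $S$, so the corresponding object on the tangent-bundle side must be as well.

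Concretely, I would fix $m\in M$ and a non-null $w\in T_mM$, take $\gamma$ to be the geodesic of $S$ with $\dot\gamma(0)=w$, and consider the moving plane $\mathscr{P}=(T(TM\setminus 0),\mathcal{V}TM,\Phi_t^S)$. The flow $\Phi_t^S$ is determined by $S$ alone, the vertical distribution $\mathcal{V}TM$ is canonical, and the tautological vertical-lift isomorphism $i_w:T_mM\to\mathcal{V}_wTM$ (and hence $\iota_{w,0}=i_w^{-1}$ at $t=0$) is built from the bundle structure of $TM$. Consequently the fanning curve $\ell_w(t)={\Phi_t}^*(\mathcal{V}TM)(w)$, its fundamental endomorphism ${\bf F}_w(t)$, and thus the Jacobi endomorphism
\begin{equation}\nonumber
{\bf K}_w(0)=\tfrac{1}{4}\ddot{{\bf F}}_w(0)^2
\end{equation}
depend solely on $S$, with no reference to any connection $\nabla$ on (\ref{verticalbundle}).

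I would then invoke Proposition \ref{propjacobicurvatureendo} at $t=0$, which asserts precisely that
\begin{equation}\nonumber
{\bf R}_w=\iota_{w,0}\circ\bigl({\bf K}_w(0)|_{\ell_w(0)}\bigr)\circ\iota_{w,0}^{-1}.
\end{equation}
The right-hand side is manifestly independent of the choice of $\nabla$, so the same must hold for ${\bf R}_w$, proving that any two connections $\nabla$ satisfying \textbf{L} and \textbf{T} produce the same endomorphism $v\mapsto R_w(w,v)w$. This justifies the designation of ${\bf R}_w$ as the curvature endomorphism of $S$ in the direction $w$.

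The only delicate point is that the argument rests entirely on Proposition \ref{propjacobicurvatureendo}, whose proof is deferred to $\S$\ref{sectionproofs}; that identification is the actual content, and one expects the work there to consist of unwinding (\ref{curvaturetensor}) in a local frame adapted to the splitting (\ref{horizontaltangentspacedecomposition}), writing ${\bf K}_w(0)$ in terms of Lie brackets via (\ref{jacobi2}), and matching the two using the properties \textbf{L} and \textbf{T} of $\nabla$. For the present Proposition-Definition, however, nothing beyond the invocation of that identification and the intrinsic nature of $\ell_w(t)$ and $\iota_{w,0}$ is needed.
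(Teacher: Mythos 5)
Your proposal is correct and matches the paper's own argument: the paper states this Proposition-Definition precisely as a consequence of Proposition \ref{propjacobicurvatureendo}, whose deferred proof in $\S$\ref{sectionproofs} identifies ${\bf R}_{\dot\gamma(t)}$ with the connection-independent Jacobi endomorphism ${\bf K}$ via the canonical isomorphism (\ref{isomorphism3}), exactly as you invoke it. (Minor note: that identification only uses property \textbf{L}; property \textbf{T} enters in the companion statement about the covariant derivative, Proposition \ref{propdynamicalcovariantderivative}.)
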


Let us now suppose that $S$ is the geodesic spray of a Finsler metric $F$ on $M$. 

\begin{propdef}
The curvature endomorphism ${\bf R}_w$ is symmetric with respect to $g_F(w)$. As a consequence, given a $2$-dimensional subspace $\Pi\subset T_mM$
containing $w$,  say $\Pi={\rm span}[w,u]$, then the following quantity
\begin{equation}\nonumber
K_F(w,\Pi) = \frac{g_F(w)\bigl({\bf R}_w(u),u\bigr)}{g_F(w)(w,w)g_F(w)(u,u)-g_F(w)(w,u)^2} 
\end{equation}
does not depend on $u$ but only on the flag $(w,\Pi)$. This is the so-called {\rm flag curvature} of the flag $(w,\Pi)$.
\end{propdef}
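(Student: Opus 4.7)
The plan is to reduce the statement to a linear-symplectic fact about the Jacobi curve $\ell_w(t)\in\Lambda(T_wTM)$ of Section~\ref{subsectionjacobisprays}. At $t=0$ the isomorphism $\iota_{w,0}$ from (\ref{isomorphism3}) is just the tautological $i_w^{-1}:\mathcal{V}_wTM\to T_mM$, so by Propositions~\ref{propjacobicurvatureendo} and~\ref{propwronskianfundamentaltensor} it intertwines the curvature endomorphism ${\bf R}_w$ with ${\bf K}_w(0)|_{\ell_w(0)}$ and the fundamental tensor $g_F(w)$ with the Wronskian $W_w(0)$. The asserted $g_F(w)$-symmetry of ${\bf R}_w$ is then immediate from Proposition~\ref{propfanninglagr}(3), which says precisely that ${\bf K}(t)|_{\ell(t)}$ is $W(t)$-symmetric for any fanning curve of Lagrangian subspaces.

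For the well-definedness of $K_F(w,\Pi)$, the extra input needed is ${\bf R}_w(w)=0$. I would extract this from (\ref{jacobi2}) applied to the canonical vertical field $U=C$: because $[C,S_F]=S_F$ and $S_F$ is tangent to $\mathcal{H}TM$, one has ${\rm H}(C)=\mathcal{P}_\mathcal{H}([S_F,C])=-\mathcal{P}_\mathcal{H}(S_F)=-S_F$, hence
\begin{equation*}
\mathcal{K}(C)=-\mathcal{P}_{\mathcal{V}TM}\bigl([S_F,-S_F]\bigr)=0.
\end{equation*}
Evaluating at $w$ gives ${\bf K}_w(0)(C_w)=0$, and $i_w^{-1}(C_w)=w$ transfers this to ${\bf R}_w(w)=0$.

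With symmetry and ${\bf R}_w(w)=0$ in hand, the remaining step is purely algebraic. Any $u'$ spanning $\Pi$ together with $w$ has the form $u'=au+bw$ with $a\neq 0$. Symmetry of ${\bf R}_w$ combined with ${\bf R}_w(w)=0$ yields
\begin{equation*}
g_F(w)\bigl({\bf R}_w u',u'\bigr)=a^2\,g_F(w)\bigl({\bf R}_w u,u\bigr),
\end{equation*}
and a straightforward expansion of the Gram determinant shows
\begin{equation*}
g_F(w)(w,w)g_F(w)(u',u')-g_F(w)(w,u')^2=a^2\bigl[g_F(w)(w,w)g_F(w)(u,u)-g_F(w)(w,u)^2\bigr].
\end{equation*}
The factor $a^2$ cancels in the ratio, so $K_F(w,\Pi)$ depends only on the flag $(w,\Pi)$.

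The main bookkeeping hurdle is simply keeping track of which moving-plane invariants correspond to which Finslerian objects; the only substantive input beyond linear-symplectic routine is the vanishing ${\bf R}_w(w)=0$, which is ultimately a reflection of the homogeneity $[C,S_F]=S_F$ of the geodesic spray.
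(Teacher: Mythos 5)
Your proposal is correct and takes essentially the same route as the paper: the $g_F(w)$-symmetry of ${\bf R}_w$ is obtained by transporting, through the isomorphism $\iota_{w,0}$ together with Propositions \ref{propjacobicurvatureendo} and \ref{propwronskianfundamentaltensor}, the $W$-symmetry of the Jacobi endomorphism given in item (3) of Proposition \ref{propfanninglagr}. The extra material you supply for the well-definedness of $K_F(w,\Pi)$ (the vanishing $\mathcal{K}(C)=0$ from the homogeneity $[C,S_F]=S_F$, and the $a^2$-cancellation) is a correct filling-in of what the paper leaves implicit in ``as a consequence''; note that ${\bf R}_w(w)=0$ also follows immediately from the definition $(\ref{curvaturetensor})$, since ${\bf R}_w(w)=R_w(w,w)w$ and $\mathcal{R}(X,Y)Z$ is antisymmetric in $X,Y$.
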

\begin{proof}
By Proposition \ref{propjacobicurvatureendo}, to be proved below, and Proposition \ref{propwronskianfundamentaltensor}, 
the statement about ${\bf R}_w$ is nothing but a manifestation of 
the symmetry of the Jacobi endomorphism stated in (3) of 
Proposition \ref{propfanninglagr}.
\end{proof}

\subsubsection{Proofs of Propositions \ref{propjacobicurvatureendo} and \ref{propdynamicalcovariantderivative}}\label{sectionproofs}

Let $V$ be a vector field on $M$. Since $V^\mathfrak{h}$ is $\pi$-related to $V$, then
\begin{equation}\label{eq341}
\mathcal{J}(V^\mathfrak{h}) = V^\mathfrak{v}. 
\end{equation}
Let, as in $\S$\ref{sectionliebrackets}, ${\rm H}:\mathcal{V}TM\rightarrow\mathcal{H}TM$ be the bundle isomorphism
corresponding to the horizontal derivative. From (\ref{inversehorizontalderivative}) and (\ref{fundamentalendoalmosttangent}) we have
${\rm H}^{-1}=-\mathcal{J}|_{\mathcal{H}TM}$. It follows from this and (\ref{eq341}) that
\begin{equation}\label{eq431}
{\rm H}(V^\mathfrak{v}) = -V^\mathfrak{h}. 
\end{equation}
Substituting this in (\ref{jacobi2}) gives us
\begin{equation}
\mathcal{K}(V^\mathfrak{v}) = \mathcal{P}_\mathcal{V}\bigl([S,V^\mathfrak{h}]\bigr). 
\end{equation}
Let us now compute ${\bf R}_{\dot{\gamma}(t)}(V)$. We have $\dot{\gamma}(t)^\mathfrak{v}=C_{\dot{\gamma}(t)}$ and $\dot{\gamma}(t)^\mathfrak{h}=S_{\dot{\gamma}(t)}$,
since ${\rm d}\pi(\dot{\gamma}(t))S=\dot{\gamma}(t)$ and $S$ is horizontal. Thus 
\begin{equation}\nonumber
{\bf R}_{\dot{\gamma}(t)}(V)={i_{\dot{\gamma}(t)}}^{-1}\mathcal{R}(S,V^\mathfrak{h})C.
\end{equation}
On the other hand, 
it follows from {\bf L.} that $\nabla_SC=\mathcal{P}_\mathcal{V}(S)=0$, $\nabla_{V^\mathfrak{h}}C=\mathcal{P}_\mathcal{V}(V^\mathfrak{h})=0$ and 
$\nabla_{[S,V^\mathfrak{h}]}C=\mathcal{P}_\mathcal{V}([S,V^\mathfrak{h}])$. Therefore,
\begin{equation}\nonumber
{\bf R}_{\dot{\gamma}(t)}(V)  = {i_{\dot{\gamma}(t)}}^{-1}\mathcal{P}_\mathcal{V}\bigl([S,V^\mathfrak{h}]\bigr) = 
{i_{\dot{\gamma}(t)}}^{-1}\mathcal{K}(V^\mathfrak{v}).
\end{equation} 
This proves Proposition \ref{propjacobicurvatureendo}.\par As for Proposition \ref{propdynamicalcovariantderivative}, note that since $\gamma$ is a geodesic of 
$S$ and $S$ is horizontal, then $\dot{\gamma}:I\subseteq\mathds{R}\rightarrow TM\backslash 0$ is a horizontal lift of $\gamma$ and, thus,
\begin{equation}\nonumber
{\rm D}^{\dot{\gamma}}V/{\rm d}t={i_{\dot{\gamma}}}^{-1}\nabla_S V^\mathfrak{v}. 
\end{equation}
On the other hand, by substituting $\mathcal{J}(S)=C$, $\mathcal{J}(V^\mathfrak{h}) = V^\mathfrak{v}$, and $\nabla_{V^\mathfrak{h}}C=0$
in the equality ${\rm T}(S,V^\mathfrak{h})=0$, we obtain $\nabla_SV^\mathfrak{v}=\mathcal{J}([S,V^\mathfrak{h}])$.
Therefore, since 
${\rm d}\Phi_{-t}(\Phi_t({\bf v}))V^\mathfrak{h}=-{\bf H}_v(t)a(t)$ (this follows from (\ref{eq431})), we have
\begin{eqnarray}
{\rm d}\Phi_{-t}(\Phi_t({\bf v}))\nabla_SV^\mathfrak{v} & = & -{\bf F}_v(t)\frac{\rm d}{{\rm d}t}\bigl(-{\bf H}_v(t)a(t)\bigr)\nonumber\\
& = & \frac{\rm d}{{\rm d} t}\bigl({\bf F}_v(t){\bf H}_v(t)a(t)\bigr)-\dot{\bf F}_v(t){\bf H}_v(t)a(t)\nonumber\\
& = & \dot{a}(t)-{\bf H}_v(t)a(t)\nonumber\\
& = & {\bf P}_{\ell_v}(t)\dot{a}(t),\nonumber
\end{eqnarray}
where we have used (\ref{inversehorizontalderivative}). The result follows. \qed

\section{An O'Neill formula for the flag curvatures in an isometric submersion via symplectic reduction of fanning curves}

In this section we shall see how 
a theory of symplectic reductions of fanning curves, as developed in \cite{Henrique}, 
leads to an O'Neill type formula for flag curvatures in a Finsler submersion. As remarked in the introduction, a similar theory of symplectic reductions has been
developed in \cite{ACZ} and applied to some problems from mechanics.
\subsection{Symplectic reduction of fanning curves}\label{subsectionreduction}

We begin by  
summarizing the results from \cite{Henrique} we shall need, and refer the reader to that work for more details.
\subsubsection{Linear symplectic reduction}

A subspace $\mathbb{W}\subseteq V$ is said to be {\it coisotropic} if $\mathbb{W}^\omega\subseteq \mathbb{W}$.
For such a subspace $\mathbb{W}$, 
the (restriction of) the symplectic form $\omega$ descends
to a symplectic form $\omega_R$ on $\mathbb{W}/\mathbb{W}^\omega$ and the symplectic space $(\mathbb{W}/\mathbb{W}^\omega,\omega_R)$ is the so-called linear symplectic 
reduction of $V$ by $\mathbb{W}$. Furthermore,
if $\ell\subset V$ is a Lagrangian subspace, then $\pi(\ell\cap \mathbb{W})$ is a Lagrangian subspace of $\mathbb{W}/\mathbb{W}^\omega$, where 
$\pi:\mathbb{W}\rightarrow \mathbb{W}/\mathbb{W}^\omega$ is the quotient map. 
We shall use the notation $\ell_R=\pi(\ell\cap \mathbb{W})$. Therefore, fixed a coisotropic subspace $\mathbb{W}$, one has a {\it symplectic reduction map} 
\begin{equation}\label{mapreduction}
\lambda:\Lambda(V)\rightarrow \Lambda(\mathbb{W}/\mathbb{W}^\omega)~,~~\lambda(\ell)=\ell_R.
\end{equation}
\noindent Consider the following open and dense subset $\mathcal{U}\subset\Lambda(V)$, $\mathcal{U}=\big\{\ell~:~\ell\cap \mathbb{W}^\omega=\{0\}\big\}$. 
For $\ell\in\mathcal{U}$, one has an isomorphism
\begin{equation}\label{isomorphismreduction}
\pi|_{\ell\cap \mathbb{W}}:\ell\cap \mathbb{W}\rightarrow \ell_R. 
\end{equation}
\begin{lemma}\label{lemmasmooth}
The map $(\ref{mapreduction})$ is smooth on $\mathcal{U}$. Furthermore, given $\ell\in\mathcal{U}$, upon identifying $\ell_R$ with 
$\ell\cap \mathbb{W}$ via $(\ref{isomorphismreduction})$, 
the derivative ${\rm d}\lambda(\ell):{\rm Bil}_{\rm sym}(\ell)\rightarrow{\rm Bil}_{\rm sym}(\ell\cap \mathbb{W})$ is the restriction map.
\end{lemma}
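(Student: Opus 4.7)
The plan is to first pin down the dimension of $\ell\cap\mathbb{W}$ on $\mathcal{U}$, then establish smoothness by factoring $\lambda$ through a linear quotient, and finally compute the derivative via a direct Wronskian calculation using sections that stay inside $\mathbb{W}$.

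For the dimension count: since $\ell$ is Lagrangian, $(\ell\cap\mathbb{W})^\omega = \ell^\omega+\mathbb{W}^\omega = \ell+\mathbb{W}^\omega$, and for $\ell\in\mathcal{U}$ this sum is direct, so $\dim(\ell+\mathbb{W}^\omega)=n+\dim\mathbb{W}^\omega$, whence $\dim(\ell\cap\mathbb{W})=n-\dim\mathbb{W}^\omega$ is constant on the open subset $\mathcal{U}\subset\Lambda(V)$. This in particular justifies that $\pi|_{\ell\cap\mathbb{W}}$ is an isomorphism onto $\ell_R$, since $\ell\cap\mathbb{W}^\omega=0$ forces injectivity and the dimensions match.

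For smoothness: let $p:V\to V/\mathbb{W}^\omega$ be the quotient projection. For $\ell\in\mathcal{U}$, $p|_\ell$ is injective, so $\ell\mapsto p(\ell)$ defines a smooth map from $\mathcal{U}$ into the ordinary Grassmannian of $n$-planes in $V/\mathbb{W}^\omega$ (injective linear images of a smooth family of subspaces depend smoothly on the family). Because $\mathbb{W}^\omega\subseteq\mathbb{W}$, one has $\ell_R = p(\ell)\cap(\mathbb{W}/\mathbb{W}^\omega)$, and this intersection has the constant dimension $n-\dim\mathbb{W}^\omega$ established above; intersection with a fixed subspace is smooth whenever the dimension of the intersection is locally constant, so $\lambda$ is smooth on $\mathcal{U}$. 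A secondary consequence that will be used below is that $t\mapsto\ell(t)\cap\mathbb{W}$ is itself a smooth subbundle along any smooth curve $\ell(t)$ in $\mathcal{U}$.

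For the derivative identification, fix $\ell\in\mathcal{U}$ and $W\in T_\ell\Lambda(V)\cong\mathrm{Bil}_{\rm sym}(\ell)$, and take a smooth curve $\ell(t)$ in $\mathcal{U}$ with $\ell(0)=\ell$ and Wronskian $W$ at $t=0$. By the smoothness remark, given $v\in\ell\cap\mathbb{W}$ we may pick a section $a(t)\in\ell(t)\cap\mathbb{W}$ with $a(0)=v$. Then $b(t):=\pi(a(t))$ is a section of $\ell_R(t)$ with $b(0)=v_R:=\pi(v)$, and $\dot b(0)=\pi(\dot a(0))$. For $u\in\ell\cap\mathbb{W}$ with $u_R=\pi(u)$, using that $u\in\mathbb{W}$ and $\dot a(0)\in\mathbb{W}$ (so that $\omega_R$ is computed by lifting via $\pi$) and the definition of the Wronskian, one gets
\[
\bigl({\rm d}\lambda(\ell)W\bigr)(u_R,v_R)\;=\;\omega_R\bigl(u_R,\dot b(0)\bigr)\;=\;\omega\bigl(u,\dot a(0)\bigr)\;=\;W(u,v),
\]
which is exactly the restriction map under the identification $\pi|_{\ell\cap\mathbb{W}}:\ell\cap\mathbb{W}\to\ell_R$.

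The only delicate point in this plan is the existence of the section $a(t)$ lying in $\mathbb{W}$ throughout, since the Wronskian is ordinarily computed using an arbitrary section of $\ell(t)$ through $v$; the freedom to restrict to sections staying in $\mathbb{W}$ is precisely what the smoothness of $t\mapsto\ell(t)\cap\mathbb{W}$ provides, so the two halves of the argument fit together with no additional work.
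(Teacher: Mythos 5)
Your argument is correct, and it is in fact more than the paper offers: the paper states Lemma \ref{lemmasmooth} without proof, deferring to the thesis \cite{Henrique}, so there is no in-text argument to compare against. Your three steps all check out. The dimension count is right: $(\ell\cap\mathbb{W})^\omega=\ell^\omega+\mathbb{W}^\omega=\ell+\mathbb{W}^\omega$, and on $\mathcal{U}$ the sum is direct, giving $\dim(\ell\cap\mathbb{W})=n-\dim\mathbb{W}^\omega$, which both identifies $\pi|_{\ell\cap\mathbb{W}}$ as an isomorphism onto the Lagrangian $\ell_R$ and supplies the locally constant dimension needed later. The smoothness step correctly factors $\lambda$ as $\ell\mapsto p(\ell)\mapsto p(\ell)\cap(\mathbb{W}/\mathbb{W}^\omega)$ (the identity $p(\ell)\cap(\mathbb{W}/\mathbb{W}^\omega)=p(\ell\cap\mathbb{W})$ uses $\mathbb{W}^\omega\subseteq\mathbb{W}$, which you have); the only unproved ingredients are the standard Grassmannian facts that injective linear images and intersections with a fixed subspace are smooth where the relevant dimensions are locally constant, and these are legitimate to quote at the level of detail the paper itself operates at. Finally, the derivative computation correctly isolates and resolves the one delicate point: choosing the section $a(t)$ inside $\ell(t)\cap\mathbb{W}$ (available by the constant-dimension smoothness), so that $u,\dot a(0)\in\mathbb{W}$ and $\omega_R(\pi u,\pi\dot a(0))=\omega(u,\dot a(0))$, while independence of the Wronskian from the choice of section lets you evaluate $W(u,v)$ with this particular $a(t)$. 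This yields exactly the restriction-map statement under the identification (\ref{isomorphismreduction}).
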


\subsubsection{The symplectic reduction of a fanning curve}\label{sectionsymplecticreductionfanning}

Let  $\mathbb{W}\subset V$ be a fixed  coisotropic subspace and  $\ell:I\subseteq\mathds{R}\rightarrow\Lambda(V)$ a fanning curve such that for all $t$,
\begin{itemize}
 \item[\bf i.] $\ell(t)\cap\mathbb{W}^\omega=\{0\}$, 
 \item[\bf ii.] the Wronskian $W(t)$ is non-degenerate on $\ell(t)\cap\mathbb{W}$.
\end{itemize}

\noindent In this setting, it follows from Lemma \ref{lemmasmooth} that the symplectic reduction of $\ell(t)$ by $\mathbb{W}$ is a smooth fanning curve
\begin{equation}\nonumber
\ell_R:=\lambda\circ\ell:I\rightarrow\Lambda(\mathbb{W}/\mathbb{W}^\omega).
\end{equation}

\begin{definition}
For each $t$, we let $\mathfrak{h}(t)$ be $\ell(t)\cap\mathbb{W}$, and let $\mathfrak{v}(t)\subset\ell(t)$ be its $W(t)$-orthogonal
subspace. Since $W(t)$ is non-degenerate on $\mathfrak{h}(t)$, then
\begin{equation}\label{decompositionsymplecticreduction}
\ell(t)=\mathfrak{h}(t)\oplus\mathfrak{v}(t). 
\end{equation}
With respect to the decomposition $V=\mathfrak{h}(t)\oplus\mathfrak{v}(t)\oplus h(t)$, the projectors onto $\mathfrak{h}(t)$ and $\mathfrak{v}(t)$ 
are denoted by ${\bf P}_\mathfrak{h}(t)$ and ${\bf P}_\mathfrak{v}(t)$, respectively.
\end{definition}

It follows from Lemma \ref{lemmasmooth} that for each $t$ the quotient map $\pi$ restricts to an isomorphism
\begin{equation}\label{iso5}
\pi|_{\mathfrak{h}(t)}:\mathfrak{h}(t)\rightarrow\ell_R(t) 
\end{equation}
that pulls back the Wronskian $W_R(t)$ of $\ell_R(t)$ to the restriction of $W(t)$ to $\mathfrak{h}(t)$.

\subsubsection{The O'Neill endomorphism}\label{sectiononeillendomorphism}

The set of fanning curves on $\Lambda(V)$ satisfying {\bf i.} and {\bf ii.} above is acted upon by the group 
${\rm SP}_\mathbb{W}(V)=\big\{{\bf T}\in{\rm SP}(V)\hskip 1pt:\hskip 1pt{\bf T}(\mathbb{W})=\mathbb{W}\big\}$ 
and so is the space ${\rm J}^1_{f,\mathbb{W}}(\mathds{R};\Lambda(V))$ 
of 1-jets of such curves.
A natural equivariant map 
\begin{equation}\nonumber
{\rm J}^1_{f,\mathbb{W}}(\mathds{R};\Lambda(V))\rightarrow\mathfrak{sp}_\mathbb{W}(V)
\end{equation}
is obtained by considering, for a given fanning curve $\ell(t)\in\Lambda(V)$ satisfying {\bf i.} and {\bf ii.},
the endomorphisms 
\begin{equation}\nonumber
{\bf F}_\mathfrak{h}(t):={\bf P}_\mathfrak{h}(t)\circ{\bf F}(t). 
\end{equation}
As for the first derivative $\dot{\bf F}_\mathfrak{h}(t)$, one has

\begin{lemma}\label{oneilllemma}
Let $\mathcal{A}(t)$ be a frame for $\ell(t)$. With respect to the basis $(\mathcal{A}(t),\mathcal{H}(t))$, the matrix of $\dot{\bf F}_\mathfrak{h}(t)$  
has the block form
\begin{equation}\nonumber
\left(
\begin{array}{cc}
-{\rm C}_1(t) & {\rm C}_2(t) \\
{\rm O} & {\rm C}_1(t)
\end{array}
\right),
\end{equation}
where ${\rm C}_1(t)$ is the matrix of ${\bf P}_\mathfrak{h}(t)|_{\ell(t)}$ in the basis $\mathcal{A}(t)$. As for the block ${\rm C}_2(t)$,
\begin{enumerate}
\item Denoting still by $W(t)$ the matrix of the Wronskian of $\ell(t)$ in the basis $\mathcal{A}(t)$, then 
${\rm C}_2(t)W(t)^{-1}$ is symmetric.
\item If $\mathcal{A}(t)=(\mathcal{A}_\mathfrak{h}(t),\mathcal{A}_\mathfrak{v}(t))$, where $\mathcal{A}_\mathfrak{h}(t)$ and 
$\mathcal{A}_\mathfrak{v}(t)$ are frames for $\mathfrak{h}(t)$ and $\mathfrak{v}(t)$, respectively, then
\begin{equation}\nonumber
\mathcal{A}(t){\rm C}_2(t)=\bigl({\bf P}_\mathfrak{v}(t)\dot{\mathcal{A}}_\mathfrak{h}(t),-{\bf P}_\mathfrak{h}(t)\dot{\mathcal{A}}_\mathfrak{v}(t)\bigr). 
\end{equation}
\end{enumerate}
\end{lemma}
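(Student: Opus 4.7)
The plan is to compute the matrix of ${\bf F}_\mathfrak{h}(t)={\bf P}_\mathfrak{h}(t)\circ{\bf F}(t)$ in a convenient moving basis for $V$ and then differentiate, carefully tracking the time dependence of the basis itself. Fix a frame $\mathcal{A}(t)$ for $\ell(t)$ and work in the basis $\mathcal{B}(t)=(\mathcal{A}(t),\mathcal{H}(t))$ of $V$. Since ${\bf F}(t)$ kills $\mathcal{A}(t)$ and sends $\mathcal{H}(t)$ to $\mathcal{A}(t)$ by (\ref{inversehorizontalderivative}), and since ${\bf P}_\mathfrak{h}(t)$ kills $\mathcal{H}(t)\subset h(t)$ while sending $\mathcal{A}(t)$ to $\mathcal{A}(t){\rm C}_1(t)$ by definition of ${\rm C}_1$, the matrix of ${\bf F}_\mathfrak{h}(t)$ in $\mathcal{B}(t)$ is $\bigl(\begin{smallmatrix}{\rm O}&{\rm C}_1(t)\\{\rm O}&{\rm O}\end{smallmatrix}\bigr)$.

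Next, introduce the connection matrix $\Omega(t)$ defined by $\dot{\mathcal{B}}(t)=\mathcal{B}(t)\Omega(t)$, so that for any endomorphism $T$ the matrix of $\dot T$ in $\mathcal{B}(t)$ equals $\dot M_T(t)+[\Omega(t),M_T(t)]$. The relation $\mathcal{H}(t)={\bf P}_h\dot{\mathcal{A}}(t)$ forces the lower-left block of $\Omega$ to be ${\rm I}$; applying the commutator identity to $T={\bf F}$ and using that $\dot{\bf F}$ is ${-{\rm I}}$ on $\ell$ and ${{\rm I}}$ on $h$ (so $M_{\dot{\bf F}}=\mathrm{diag}(-{\rm I},{\rm I})$) pins down the remaining blocks and shows that the two diagonal blocks of $\Omega$ coincide; call them $A(t)$. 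Plugging this $\Omega$ into the commutator identity for ${\bf F}_\mathfrak{h}$ gives
\[
M_{\dot{\bf F}_\mathfrak{h}}(t)=\begin{pmatrix}-{\rm C}_1(t)&\dot{\rm C}_1(t)+[A(t),{\rm C}_1(t)]\\{\rm O}&{\rm C}_1(t)\end{pmatrix},
\]
which already exhibits the $\mp{\rm C}_1$ diagonal and the vanishing lower-left block, with ${\rm C}_2(t)=\dot{\rm C}_1(t)+[A(t),{\rm C}_1(t)]$.

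To prove (2), specialize to the block frame $\mathcal{A}=(\mathcal{A}_\mathfrak{h},\mathcal{A}_\mathfrak{v})$: then ${\rm C}_1=\mathrm{diag}({\rm I},{\rm O})$ is constant, so $\dot{\rm C}_1=0$, and the block entries $A_{hv}$, $A_{vh}$ of $A$ satisfy $\mathcal{A}_\mathfrak{v}A_{vh}={\bf P}_\mathfrak{v}\dot{\mathcal{A}}_\mathfrak{h}$ and $\mathcal{A}_\mathfrak{h}A_{hv}={\bf P}_\mathfrak{h}\dot{\mathcal{A}}_\mathfrak{v}$. A direct block expansion of $[A,{\rm C}_1]$ then produces $\mathcal{A}(t){\rm C}_2(t)=({\bf P}_\mathfrak{v}\dot{\mathcal{A}}_\mathfrak{h},-{\bf P}_\mathfrak{h}\dot{\mathcal{A}}_\mathfrak{v})$, as claimed.

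For (1), since the statement is invariant under the choice of frame for $\ell(t)$, continue in the block frame. Because $\mathfrak{h}$ and $\mathfrak{v}$ are $W$-orthogonal inside $\ell$, the Wronskian matrix is block diagonal $W=\mathrm{diag}(W_\mathfrak{h},W_\mathfrak{v})$, and the symmetry of ${\rm C}_2W^{-1}$ reduces to the single identity $A_{hv}^{T}W_\mathfrak{h}+W_\mathfrak{v}A_{vh}=0$. By bilinearity this becomes the pointwise statement $W(u,{\bf P}_\mathfrak{v}\dot{\hat v})+W(v,{\bf P}_\mathfrak{h}\dot{\hat u})=0$ for $u\in\mathfrak{v}$, $v\in\mathfrak{h}$, with $\hat u,\hat v$ sections of $\mathfrak{v}$ and $\mathfrak{h}$. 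Expanding each Wronskian using the natural section extensions $s\mapsto{\bf P}_{\mathfrak{v}(s)}\dot{\hat v}(s)$ and $s\mapsto{\bf P}_{\mathfrak{h}(s)}\dot{\hat u}(s)$, and repeatedly using that $\omega$ vanishes on $\ell\times\ell$ and on $h\times h$ (the latter by Proposition \ref{propfanninglagr}(2)), reduces the claim to an identity obtained by differentiating the Lagrangian relation $\omega(\hat u,\hat v)\equiv 0$. The main obstacle lies precisely here: unlike the diagonal blocks, which come from the algebra of projections, property (1) is a genuinely symplectic statement that mixes the Lagrangian nature of both $\ell(t)$ and $h(t)$ with the construction of $\mathfrak{v}$ as the $W$-orthogonal of $\mathfrak{h}$ inside $\ell$, and the delicate point is the careful book-keeping of which $\omega$- or $W$-pairing vanishes for which structural reason.
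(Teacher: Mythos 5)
Your moving-basis computation of the block form and of item (2) is correct: with $\dot{\mathcal{B}}=\mathcal{B}\Omega$ the matrix of ${\bf F}$ in $(\mathcal{A},\mathcal{H})$ is constant, the relation $\mathcal{H}={\bf P}_h\dot{\mathcal{A}}$ gives lower-left block ${\bf I}$, comparing $[\Omega,M_{\bf F}]$ with ${\rm diag}(-{\bf I},{\bf I})$ forces equal diagonal blocks $A$ (the upper-right block of $\Omega$ is not determined, but it never enters), and the commutator with $M_{{\bf F}_\mathfrak{h}}$ yields ${\rm C}_2=\dot{\rm C}_1+[A,{\rm C}_1]$, which in the adapted frame is exactly (2); the frame-independence you invoke for (1) is indeed the routine conjugation/congruence check. (For the record, the paper itself imports this lemma from \cite{Henrique} without proof, so your argument has to stand on its own.)

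The genuine gap is item (1), and you flag it yourself. Your reduction to $A_{hv}^{T}W_\mathfrak{h}+W_\mathfrak{v}A_{vh}=0$, i.e.\ to $W\bigl(c,{\bf P}_\mathfrak{v}\dot b\bigr)+W\bigl(b,{\bf P}_\mathfrak{h}\dot c\bigr)=0$ for sections $b$ of $\mathfrak{h}$ and $c$ of $\mathfrak{v}$, is correct, but the route you sketch cannot finish it. Writing $\dot b=\beta_\mathfrak{h}+\beta_\mathfrak{v}+\beta_h$ and $\dot c=\gamma_\mathfrak{h}+\gamma_\mathfrak{v}+\gamma_h$ in $V=\mathfrak{h}\oplus\mathfrak{v}\oplus h$, the definition of the Wronskian together with ``$\omega$ vanishes on $\ell\times\ell$ and $h\times h$'' turns the left-hand side into $\omega(\beta_\mathfrak{v},\gamma_h)-\omega(\beta_h,\gamma_\mathfrak{h})$, whereas differentiating $\omega(b,c)\equiv 0$ gives nothing new (its two terms $\omega(b,\dot c)$ and $\omega(c,\dot b)$ are already the $W$-orthogonality), and that orthogonality only yields $\beta_h\in\mathfrak{v}^{\omega}$, $\gamma_h\in\mathfrak{h}^{\omega}$, hence $\omega(\beta_\mathfrak{v},\gamma_h)+\omega(\beta_h,\gamma_\mathfrak{h})=\omega(\dot b,\dot c)$ --- the sum, not the needed vanishing of the difference. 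The missing ingredient is the dictionary between $\omega$-pairings against $h(t)$ and $W$-pairings on $\ell(t)$: for $z,w\in\ell(t)$ one has $\omega\bigl(z,{\bf H}(t)w\bigr)=W(t)(z,w)$, because ${\bf H}w={\bf P}_h\dot{\hat w}$ for a section $\hat w$ of $\ell$ and $\ell$ is Lagrangian. Combined with the $W$-self-adjointness of ${\bf P}_\mathfrak{h}(t)|_{\ell(t)}$ --- which is precisely where the definition of $\mathfrak{v}$ as the $W$-orthogonal complement is used --- this shows that $(x,y)\mapsto\omega({\bf F}_\mathfrak{h}(t)x,y)$ is symmetric (the only nontrivial case is $x,y\in h(t)$, where by (\ref{inversehorizontalderivative}) it becomes $W({\bf P}_\mathfrak{h}u,v)=W(u,{\bf P}_\mathfrak{h}v)$ with $u={\bf F}x$, $v={\bf F}y$), i.e.\ ${\bf F}_\mathfrak{h}(t)\in\mathfrak{sp}(V)$; hence $\dot{\bf F}_\mathfrak{h}(t)\in\mathfrak{sp}(V)$, and evaluating the symmetric form $\omega(\dot{\bf F}_\mathfrak{h}\,\cdot\,,\cdot)$ on $h(t)\times h(t)$ in the basis $\mathcal{H}(t)$, using $\omega(a_i,{\bf H}a_j)=W_{ij}$, gives exactly ${\rm C}_2^{T}W=W{\rm C}_2$, which is the symmetry of ${\rm C}_2W^{-1}$. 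Without some such step converting horizontal pairings into Wronskian pairings, the book-keeping you describe stalls at the identity above.
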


\begin{definition}
The {\it O'Neill endomorphism}, at time $\tau$, of the pair $(\ell(t),\mathbb{W})$ is the $W(\tau)$-symmetric
endomorphism $${\bf A}(\tau):\ell(\tau)\rightarrow\ell(\tau)$$ whose matrix with respect to
a frame $\mathcal{A}(\tau)$ for $\ell(\tau)$ is the matrix ${\rm C}_2(\tau)$ from Lemma \ref{oneilllemma}. 
Therefore, given frames $\mathcal{A}_\mathfrak{h}(t)$ and 
$\mathcal{A}_\mathfrak{v}(t)$ for $\mathfrak{h}(t)$ and $\mathfrak{v}(t)$, respectively, then
\begin{eqnarray}
 {\bf A}(t)\mathcal{A}_\mathfrak{h}(t) & = & {\bf P}_\mathfrak{v}(t)\dot{\mathcal{A}}_\mathfrak{h}(t)\label{oneill1}\\
 {\bf A}(t)\mathcal{A}_\mathfrak{v}(t) & = & -{\bf P}_\mathfrak{h}(t)\dot{\mathcal{A}}_\mathfrak{v}(t).\label{oneill2}
\end{eqnarray}
\end{definition}

The importance of ${\bf A}(t)$ is described in the way it relates the Jacobi endomorphism ${\bf K}_R(t)$ of $\ell_R(t)$ with the 
``$\mathfrak{h}$-component'' of the Jacobi endomorphism ${\bf K}(t)$ of $\ell(t)$:

\begin{theorem}\label{oneillformula}
Given $a\in\mathfrak{h}(t)$, let $\overline{a}$ denote its image under the isomorphism $(\ref{iso5})$. Then,
\begin{equation}\nonumber
W_R(t)\bigl({\bf K}_R(t)\overline{a},\overline{a}\bigr)=W(t)\bigl({\bf K}(t)a,a\bigr)+3W(t)\bigl({\bf A}(t)a,{\bf A}(t)a\bigr). 
\end{equation}
\end{theorem}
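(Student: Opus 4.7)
My plan is to fix $t=\tau$ and reduce the identity to a Schwarzian computation via Proposition \ref{propjacobiendo}, by choosing a frame of $\ell(t)$ adapted to $\mathfrak{h}(t)\oplus\mathfrak{v}(t)$ and comparing its Schwarzian with that of its $\pi$-projection. Concretely, I would choose a smooth frame $\mathcal{A}(t)=(\mathcal{A}_\mathfrak{h}(t),\mathcal{A}_\mathfrak{v}(t))$ of $\ell(t)$ with $\mathcal{A}_\mathfrak{h}(t)\subset\mathfrak{h}(t)$ and $\mathcal{A}_\mathfrak{v}(t)\subset\mathfrak{v}(t)$, so that by Lemma \ref{lemmasmooth} the projected curve $\bar{\mathcal{A}}_\mathfrak{h}(t):=\pi\circ\mathcal{A}_\mathfrak{h}(t)$ is a smooth frame of $\ell_R(t)$. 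Since $\mathcal{A}_\mathfrak{h}(t)$ lies entirely in the fixed subspace $\mathbb{W}$, its derivatives also lie in $\mathbb{W}$, so $\pi$ commutes with $d/dt$; moreover, by Lemma \ref{lemmasmooth}, the matrix of $W_R(\tau)$ in $\bar{\mathcal{A}}_\mathfrak{h}(\tau)$ coincides with that of $W(\tau)|_{\mathfrak{h}(\tau)}$ in $\mathcal{A}_\mathfrak{h}(\tau)$.

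A further change of frame at $\tau$ within each block lets me arrange ${\bf P}_\mathfrak{h}(\tau)\dot{\mathcal{A}}_\mathfrak{h}(\tau)=0$ and ${\bf P}_\mathfrak{v}(\tau)\dot{\mathcal{A}}_\mathfrak{v}(\tau)=0$; combined with (\ref{oneill1})--(\ref{oneill2}), this yields
$$\dot{\mathcal{A}}_\mathfrak{h}(\tau)=\mathcal{H}_\mathfrak{h}(\tau)+{\bf A}(\tau)\mathcal{A}_\mathfrak{h}(\tau),\qquad \dot{\mathcal{A}}_\mathfrak{v}(\tau)=\mathcal{H}_\mathfrak{v}(\tau)-{\bf A}(\tau)\mathcal{A}_\mathfrak{v}(\tau),$$
so that the matrix $P(\tau)$ in (\ref{PandQ}) has vanishing diagonal blocks and off-diagonal blocks encoding the matrix of ${\bf A}(\tau)$, in accordance with Lemma \ref{oneilllemma}. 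The central step is to relate $\{\bar{\mathcal{A}}_\mathfrak{h},t\}|_\tau$ to the $\mathfrak{hh}$-block of $\{\mathcal{A},t\}|_\tau$ via $\{\cdot,t\}=2Q-(1/2)P^2-\dot{P}$: applying $\pi$ to the $\mathfrak{h}$-column of (\ref{PandQ}) for $\mathcal{A}$ identifies $\bar{P}(\tau),\bar{Q}(\tau)$ in terms of the corresponding blocks of $P(\tau),Q(\tau)$; the $Q$-contribution accounts for $W({\bf K}a,a)$, while the $(1/2)P^2$ and $\dot{P}$ pieces combine --- using that the off-diagonal blocks of $P(\tau)$ are the matrix of ${\bf A}(\tau)$ --- into a correction quadratic in ${\bf A}(\tau)$ of total coefficient $3$. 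Pairing with the block-diagonal Wronskian and invoking Proposition \ref{propjacobiendo} finally converts these matrix identities into the claimed bilinear identity between $W_R({\bf K}_R\bar{a},\bar{a})$, $W({\bf K}a,a)$ and $W({\bf A}a,{\bf A}a)$.

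The principal obstacle is the bookkeeping in this last step: tracking precisely how $(1/2)P^2$ and $\dot{P}$ each contribute terms quadratic in the matrix of ${\bf A}(\tau)$, and verifying that their total coefficient is $3$ rather than some other constant. This is the combinatorial heart of the O'Neill phenomenon --- the analogue of the $3|A_X Y|^2$ correction in the classical Riemannian O'Neill formula --- and requires careful accounting of how derivatives of the splitting $\mathfrak{h}(t)\oplus\mathfrak{v}(t)$ propagate into $\dot{P}(\tau)$ through the fanning equation of motion.
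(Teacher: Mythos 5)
Your strategy (adapted frames, Schwarzian comparison via Proposition \ref{propjacobiendo}) is a reasonable one, and the preliminary steps are sound: the projected frame, the matching of Wronskian matrices, and the normalization killing the diagonal blocks so that the $\ell$-components of $\dot{\mathcal{A}}_\mathfrak{h},\dot{\mathcal{A}}_\mathfrak{v}$ are given by (\ref{oneill1})--(\ref{oneill2}). (Note, incidentally, that the paper itself does not prove this theorem; it is quoted from \cite{Henrique}, so there is no in-text argument to compare with.) But as a proof the proposal has a genuine gap at exactly the point where the theorem lives. First, the central step is not merely ``bookkeeping'': you cannot ``apply $\pi$ to the $\mathfrak{h}$-column of (\ref{PandQ})'' term by term, because $\mathcal{A}_\mathfrak{v}(t)$, $\dot{\mathcal{A}}_\mathfrak{v}(t)$ and the horizontal vectors ${\bf H}(t)\mathcal{A}_\mathfrak{h}(t)$, ${\bf H}(t)\mathcal{A}_\mathfrak{v}(t)$ do not lie in $\mathbb{W}$; only the full combinations $\dot{\mathcal{A}}_\mathfrak{h},\ddot{\mathcal{A}}_\mathfrak{h}$ do. One must extend $\pi$ to the projection of $V=\mathbb{W}\oplus\mathfrak{v}(t)$ onto $\mathbb{W}$ followed by the quotient map, and then $\ddot{\bar{\mathcal{A}}}_\mathfrak{h}$ acquires terms involving the reduction of ${\bf H}(t)\mathcal{A}_\mathfrak{v}(t)$, which must in turn be re-expanded in the reduced frame $(\bar{\mathcal{A}}_\mathfrak{h},\dot{\bar{\mathcal{A}}}_\mathfrak{h})$ using the coisotropy of $\mathbb{W}$ and $\omega_R$. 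In particular $\bar P,\bar Q$ are \emph{not} simply ``the corresponding blocks of $P,Q$'': the horizontal curve of $\ell_R$ is not the image of $h(t)$ under reduction, and the discrepancy (an $\ell_R$-component of $\pi({\bf H}\mathcal{A}_\mathfrak{h})$ quadratic in ${\bf A}$ once differentiated) is precisely where part of the O'Neill correction comes from. Your outline does not address this interaction of the reduction with $h(t)$ and the $\mathfrak{v}$-directions, and the coefficient $3$ --- which is the entire content of the statement --- is explicitly left unverified; your heuristic assignment (``$Q$ accounts for $W({\bf K}a,a)$, $P^2$ and $\dot P$ give the $3$'') is also off, since with your normalization the $\mathfrak{hh}$-block of $\{\mathcal{A},t\}$ already differs from $2Q_{\mathfrak{hh}}$ by a term quadratic in ${\bf A}$.

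Two further fixable but real points of rigor. The identification of the blocks of $P(\tau)$ with the matrix of ${\bf A}(\tau)$ rests on the identity ${\bf P}_{\ell(t)}\dot{\mathcal{A}}(t)=-\tfrac12\mathcal{A}(t)P(t)$ (equivalently $\mathcal{H}(t)=\dot{\mathcal{A}}(t)+\tfrac12\mathcal{A}(t)P(t)$), which you neither state nor prove; Lemma \ref{oneilllemma} does not supply it. And a normalization of the frame \emph{only at} $t=\tau$ does not determine $\dot P(\tau)$ or $\dot{\bar P}(\tau)$, which enter the Schwarzians; you need the adapted frame normalized on an interval (solve the gauge ODE $\dot g=-\alpha g$ blockwise, which is possible), or else you must carry the extra derivative terms. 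Until the reduction of the horizontal data is computed and the quadratic terms are actually assembled, the proposal identifies a plausible route but does not establish the formula.
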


\subsection{Isometric submersions of Finsler manifolds} \label{ssectionisometricsubs}

In this section we shall briefly collect some definitions and results from \cite{AD2}.

\begin{definition}
Given Finsler manifolds $(M,F_1)$ and $(N,F_2)$, a submersion
\begin{equation}\label{submersion}
f: M\rightarrow N
\end{equation}
is said to be {\it isometric} if, for every $m\in M$, the derivative ${\rm d}f(m):T_mM\rightarrow T_{f(m)}N$ maps the closed unit ball of $(F_1)_m$
onto the closed unit ball of $(F_2)_m$. 
\end{definition}

\begin{remark}
This concept can be alternatively stated as follows: for all $m\in M$, the derivative 
${\rm d}f(m):T_mM\rightarrow T_{f(m)}N$ induces an isometry between $T_{f(m)}N$ and 
the quotient
$T_mM / \ker {\rm d}f(m)$, endowed with the quotient norm
\[
|[v]|_{quotient} = \min_{w\in \ker {\rm d}f(m)} F_1(v+w) \, .
\]
\end{remark}

For an isometric submersion one defines the {\it horizontal cone} at a given $m$ as the set
\begin{equation}\nonumber
\mathscr{H}_m = \{v\in T_mM\backslash 0~:~F_1(v) = F_2({\rm d}f(m)v)\}\, , 
\end{equation}
that is, the elements of the horizontal cone are the non-zero vectors realizing the quotient norm above. 

Denoting by $\mathscr{V}_m$ the kernel of ${\rm d}f(m)$, one has, for each $v\in\mathscr{H}_m$, a $g_{F_1}(v)$-orthogonal decomposition
\begin{equation}\nonumber
T_mM = T_v\mathscr{H}_m\oplus\mathscr{V}_m 
\end{equation}
and the derivative ${\rm d}f(m)$ restricts to an isometry
\begin{equation}\label{isometry2}
 {\rm d}f(m) : \bigl(T_v\mathscr{H}_m,g_{F_1}(v)\bigr) \rightarrow \bigl( T_{f(m)}N, g_{F_2}(u)\bigr)
\end{equation}
for $u={\rm d}f(m)v$.
\par An immersed curve $\gamma:I\subseteq\mathds{R}\rightarrow M$ is said to be {\it horizontal} if $\dot{\gamma}(t)\in\mathscr{H}_{\gamma(t)}$ for every $t\in I$.
If $\gamma$ is a geodesic, this condition holds once it holds for some $t_0\in I$.

\subsection{The point of view of symplectic reductions}

A submanifold $P$ of a symplectic manifold $(Q,\omega)$ is co-isotropic if, for every $p\in P$, $T_pP$ is a co-isotropic subspace of $T_pQ$.
In this case, the distribution $p\mapsto T_pP^\omega$ on $P$ is integrable. When the space of leaves $P_R$ of the corresponding foliation has a smooth structure,
the pull-back of $\omega$ to $P$ descends to a symplectic structure on $P_R$; we refer to \cite{marsden} for more details.
This procedure has been applied in \cite{AD2} to obtain a symplectic description of an isometric submersion that, by passing from co-tangent to tangent bundles via the Legendre
transformations, goes as follows:

\begin{definition}
The {\it co-normal bundle} of the isometric submersion (\ref{submersion}) is the submanifold of $TM\backslash 0$ given by the union of all horizontal cones, 
and shall be denoted by $\mathcal{N}$. The derivative of $f$ restricts to a map
\begin{equation}\nonumber
\nu = f_*|_\mathcal{N} : \mathcal{N} \rightarrow TN\backslash 0. 
\end{equation}
\end{definition}

\begin{proposition}\label{propconormalreduction}
The co-normal bundle $\mathcal{N}$ is a co-isotropic submanifold of $(TM\backslash 0,\newline\omega_{F_1})$ with smooth space of leaves $\mathcal{N}_R$. The map $\nu$ above
is constant on the leaves and the induced map 
$\overline{\nu}:\mathcal{N}_R\rightarrow (TN\backslash 0,\omega_{F_2})$ is a symplectic diffeomorphism. Furthermore, the geodesic flow $\Phi_t^{F_1}$ of $F_1$ leaves $\mathcal{N}$
invariant and its restriction to $\mathcal{N}$ descends to a flow in $\mathcal{N}_R$ which corresponds, under $\overline{\nu}$, to the geodesic flow of $F_2$, $\Phi_t^{F_2}$.
\end{proposition}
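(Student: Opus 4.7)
The plan is to transfer the entire problem to the cotangent bundles via the Legendre transformations, where the statement becomes a special case of the classical symplectic reduction along the conormal bundle of a submersion.

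\textbf{Step 1 (Identification with the standard conormal bundle).} I would first show that $v\in\mathscr{H}_m$ if and only if $g_{F_1}(v)(v,\,\cdot\,)$ annihilates $\mathscr{V}_m=\ker{\rm d}f(m)$. One direction is the $g_{F_1}(v)$-orthogonal decomposition already recorded in \S\ref{ssectionisometricsubs}: for $v\in\mathscr{H}_m$, $T_v\mathscr{H}_m$ and $\mathscr{V}_m$ are $g_{F_1}(v)$-orthogonal and $v\in T_v\mathscr{H}_m$; conversely, if $\mathscr{L}_{F_1}(v)$ vanishes on $\mathscr{V}_m$ then the first-order minimality condition forces $v$ to realise the quotient-norm minimum. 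In view of (\ref{legendretransf}), this is equivalent to $\mathscr{L}_{F_1}(v)$ factoring through ${\rm d}f(m)$, so
\[
\mathscr{L}_{F_1}(\mathcal{N})=C^{*}:=\{({\rm d}f(m))^{*}\eta\,:\,m\in M,\ \eta\in T^{*}_{f(m)}N\setminus 0\},
\]
the usual (cotangent) conormal bundle to the fibres of $f$.

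\textbf{Step 2 (Reduction of $C^{*}$).} I would then invoke the classical fact that $C^{*}$ is a coisotropic submanifold of $(T^{*}M\setminus 0,\omega)$ whose characteristic foliation is the fibre foliation of the smooth surjection
\[
\Psi:C^{*}\longrightarrow T^{*}N\setminus 0,\qquad ({\rm d}f(m))^{*}\eta\longmapsto(f(m),\eta),
\]
and that $\Psi$ identifies the reduced symplectic manifold with the canonical $(T^{*}N\setminus 0,\omega)$. Since $\mathscr{L}_{F_1}^{*}\omega=\omega_{F_1}$, it follows that $\mathcal{N}$ is coisotropic in $(TM\setminus 0,\omega_{F_1})$ with smooth leaf space $\mathcal{N}_R$, and the composition $\overline{\nu}:=\mathscr{L}_{F_2}^{-1}\circ\overline{\Psi}\circ\mathscr{L}_{F_1}|_{\mathcal{N}}$ (read at the level of leaf spaces) yields a symplectic diffeomorphism onto $(TN\setminus 0,\omega_{F_2})$. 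A short chase of definitions---using that for $v\in\mathscr{H}_m$ and $u={\rm d}f(m)v$ one has $\mathscr{L}_{F_1}(v)=({\rm d}f(m))^{*}\mathscr{L}_{F_2}(u)$, a direct consequence of Step 1 and the homogeneity relating $\mathscr{L}_{F_i}$ and $F_i^{2}$---shows that $\overline{\nu}$ coincides with the map induced by $\nu=f_{*}|_{\mathcal{N}}$; in particular $\nu$ is constant on the characteristic leaves.

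\textbf{Step 3 (Correspondence of geodesic flows).} The last assertion reduces to the duality identity $F_1^{*}\big(({\rm d}f(m))^{*}\eta\big)=F_2^{*}(\eta)$ for every $\eta\in T^{*}_{f(m)}N\setminus 0$, which follows from the isometric-submersion property by a sup-duality argument: the supremum defining $F_1^{*}(({\rm d}f(m))^{*}\eta)$ over $\{F_1(v)\leq 1\}$ is attained on the horizontal cone, where ${\rm d}f(m)$ is the unit-ball isometry (\ref{isometry2}). Consequently $(1/2)(F_1^{*})^{2}$ restricted to $C^{*}$ equals the pullback under $\Psi$ of $(1/2)(F_2^{*})^{2}$. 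The general symplectic-reduction principle---a Hamiltonian whose restriction to a coisotropic descends to the reduced space has its Hamiltonian vector field tangent to the coisotropic and projecting onto the Hamiltonian field of the descended function---then says that $S^{*}_{F_1}$ is tangent to $C^{*}$ and $\Psi$-related to $S^{*}_{F_2}$. Pulling back by the Legendre transformations yields the invariance of $\mathcal{N}$ under $\Phi_t^{F_1}$ and the correspondence with $\Phi_t^{F_2}$ under $\overline{\nu}$.

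The main obstacle, conceptually, is Step 3: one has to carefully establish the duality $F_1^{*}\circ({\rm d}f)^{*}=F_2^{*}$ and then invoke the reduction principle in the correct restriction-descends-therefore-vector-field-descends form, taking care that everything stays compatible with the non-reversibility of the $F_i$ and with the removal of the zero section. The smoothness of the leaf space $\mathcal{N}_R$ is automatic once $C^{*}_R$ has been globally identified with $T^{*}N\setminus 0$ via $\Psi$ in Step 2.
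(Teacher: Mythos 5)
Your proposal is correct and takes essentially the same route as the paper, which itself gives no argument beyond invoking \cite{AD2}: prove the statement for the classical conormal-type bundle $\{({\rm d}f(m))^*\eta : \eta\in T^*_{f(m)}N\backslash 0\}$ in $(T^*M\backslash 0,\omega)$, using the dual characterization $F_1^*\circ({\rm d}f)^* = F_2^*$ of isometric submersions, and then transport everything to the tangent bundles via the Legendre transformations, exactly as you do (your identity $\mathscr{L}_{F_1}(v)=({\rm d}f(m))^*\mathscr{L}_{F_2}(u)$ being the key compatibility). No gaps of substance; the only point worth a word in a write-up is the strict-convexity argument behind that identity and behind Step 1's "first-order condition implies minimum".
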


In particular, it follows from the  proposition above that given $v\in\mathcal{N}$, and letting $u=f_*v$, the map 
\begin{equation}\nonumber
\lambda_v : \Lambda\bigl(T_v TM\bigr) \rightarrow \Lambda\bigl(T_uTN\bigr) , ~~ \lambda(\ell)={\rm d}\nu(v)(\ell\cap T_v\mathcal{N})
\end{equation}
is well-defined and is the symplectic reduction map (\ref{mapreduction}) with respect to the co-isotropic subspace $T_v\mathcal{N}\subset T_vTM$. Observe
that 
\begin{equation}\nonumber
\lambda_v(\mathcal{V}_vTM)=\mathcal{V}_uTN; 
\end{equation} 
indeed, this follows from the following lemma whose straightforward proof will be omitted.
\begin{lemma}\label{lemmasubmersions}
Let $m \in M$, $v\in\mathscr{H}_m$, and $u={ \rm d}f(m)v$. Then,
\begin{enumerate}
\item $\mathcal{V}_vTM \cap T_v\mathcal{N} = T_v\mathscr{H}_m$.
\item The map ${\rm d}\nu(v)|_{T_v\mathscr{H}_m}$ is equal to $i_u \circ {\rm d}f(m)|_{T_v\mathscr{H}_m} :T_v\mathscr{H}_m\rightarrow\mathcal{V}_uTN$.
\end{enumerate}
\end{lemma}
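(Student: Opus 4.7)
The plan is to observe that the horizontal cone $\mathscr{H}_m$ is precisely the fiber over $m$ of the foot-point projection restricted to $\mathcal{N}$, and then to unwind definitions. The only subtle point is that this restricted projection is a submersion; once this is granted, both statements are immediate.

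For Part 1, the definition of $\mathcal{N}$ as the union of horizontal cones gives $\mathcal{N}\cap T_mM=\mathscr{H}_m$, so the smooth map $\pi_M:=\pi|_{\mathcal{N}}:\mathcal{N}\to M$ satisfies $\pi_M^{-1}(m)=\mathscr{H}_m$. Since $\mathscr{H}_m$ is parametrized diffeomorphically by $T_{f(m)}N\setminus 0$ via $df(m)$, one has $\dim\mathscr{H}_m=\dim N=\dim\mathcal{N}-\dim M$. To verify that $\pi_M$ is a submersion at $v$, I would extend $u=df(m)v$ to a smooth local vector field $\widetilde U$ on $N$ near $f(m)$ and, for each $m'$ in a neighborhood of $m$, select in $\mathscr{H}_{m'}$ the unique vector mapped by $df(m')$ to $\widetilde U(f(m'))$; the smooth dependence of these choices on $m'$ (coming from the smooth dependence of the parametrization of the horizontal cone on the foot-point, ultimately a consequence of the Legendre transformation being a diffeomorphism) yields a smooth local section of $\pi_M$ through $v$. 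Therefore $T_v\mathscr{H}_m=\ker d\pi_M(v)=\ker d\pi(v)\cap T_v\mathcal{N}=\mathcal{V}_vTM\cap T_v\mathcal{N}$.

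For Part 2, I would choose a smooth curve $\sigma:(-\varepsilon,\varepsilon)\to\mathscr{H}_m$ with $\sigma(0)=v$ and $\sigma'(0)=X$. Since $\sigma$ lies entirely in the linear space $T_mM$, one may write $X=i_v(w)$, where $w\in T_mM$ is the velocity of $\sigma$ viewed as a curve in the vector space $T_mM$. Because $\pi\circ\sigma\equiv m$, the composition $\nu\circ\sigma$ satisfies $(\nu\circ\sigma)(t)=df(m)\cdot\sigma(t)$, a curve in the linear fiber $T_{f(m)}N=T_uN$ through $u$ with velocity $df(m)w$. Hence
\[
d\nu(v)X=\frac{d}{dt}\bigg|_{t=0}\nu(\sigma(t))=i_u\bigl(df(m)w\bigr),
\]
which is the stated formula under the implicit identification of $T_v\mathscr{H}_m\subset\mathcal{V}_vTM$ with $i_v^{-1}(T_v\mathscr{H}_m)\subset T_mM$. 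The main (and very modest) obstacle throughout is keeping straight the bookkeeping between $T_v(T_mM)$ and $T_mM$ via $i_v$, matching the authors' assessment that the proof is straightforward.
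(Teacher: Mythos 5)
Your proposal is correct; note that the paper itself omits the proof of this lemma (declaring it straightforward), so there is no written argument to compare against. Your route is the natural one: for part (1) the inclusion $T_v\mathscr{H}_m\subseteq\mathcal{V}_vTM\cap T_v\mathcal{N}$ is immediate, and the reverse inclusion is exactly the statement that $\pi|_{\mathcal{N}}:\mathcal{N}\rightarrow M$ is a submersion with fibre $\mathscr{H}_m$, which you establish by exhibiting local sections; the one ingredient you invoke without full detail, namely the unique and smooth dependence of the horizontal lift on the foot-point, is indeed available and is most cleanly seen through the dual picture underlying \cite{AD2}: a Lagrange-multiplier argument shows that $v'\in\mathscr{H}_{m'}$ with ${\rm d}f(m')v'=u'$ if and only if $\mathscr{L}_{F_1}(v')={\rm d}f(m')^*\mathscr{L}_{F_2}(u')$, so the lift is $v'=\mathscr{L}_{F_1}^{-1}\bigl({\rm d}f(m')^*\mathscr{L}_{F_2}(u')\bigr)$, manifestly smooth in $m'$, which is precisely the appeal to the Legendre transformation you make. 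Part (2) is a clean unwinding of definitions: since a curve in $\mathscr{H}_m$ stays in the fixed vector space $T_mM$ and $\nu$ acts on it by the fixed linear map ${\rm d}f(m)$, its image is a curve in the vector space $T_{f(m)}N$, and reading off velocities via $i_v$ and $i_u$ gives exactly $i_u\circ{\rm d}f(m)$; your explicit handling of the tautological identifications is the only bookkeeping required, and it matches the implicit identification in the statement of the lemma.
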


\subsection{The Jacobi curves}

We now compare the Jacobi curves of the total space and the base space of an isometric submersion, based on \cite{Henrique}. This will furnish the desired 
O'Neill formula. 

Let  $\gamma:I\subseteq\mathds{R}\rightarrow M$ be fixed a unit-speed horizontal geodesic, with $\dot{\gamma}(0)=v$, and consider, as in $\S$\ref{subsectionjacobisprays},
the Jacobi curves associated to $F_1$ and $F_2$, based at $v$ and $u=f_*v$, respectively,
\begin{equation}\nonumber
\ell_v(t)\in\Lambda(T_vTM) ~~,~~ \ell_u(t)\in\Lambda(T_uTN). 
\end{equation}

\begin{proposition}
We have that $\ell_u=\lambda_v\circ\ell_v$. 
\end{proposition}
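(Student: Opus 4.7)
The plan is to unwind both sides directly from the definition (\ref{jacobicurvedefinition}) and the definition of $\lambda_v$, and then reduce the identity to Proposition \ref{propconormalreduction} together with Lemma \ref{lemmasubmersions}. The crucial structural point is that, since $\gamma$ is horizontal, the whole orbit $t\mapsto\Phi_t^{F_1}(v)=\dot{\gamma}(t)$ lies in $\mathcal{N}$, so one can freely transport tangent spaces to $\mathcal{N}$ along the flow.

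First I would compute $\ell_v(t)\cap T_v\mathcal{N}$. Because $\Phi_t^{F_1}$ preserves $\mathcal{N}$, its differential maps $T_{\dot{\gamma}(t)}\mathcal{N}$ isomorphically onto $T_v\mathcal{N}$, so
\begin{equation*}
\ell_v(t)\cap T_v\mathcal{N}={\rm d}\Phi_{-t}^{F_1}(\dot{\gamma}(t))\bigl(\mathcal{V}_{\dot{\gamma}(t)}TM\cap T_{\dot{\gamma}(t)}\mathcal{N}\bigr)={\rm d}\Phi_{-t}^{F_1}(\dot{\gamma}(t))\,T_{\dot{\gamma}(t)}\mathscr{H}_{\gamma(t)},
\end{equation*}
where the last equality uses Lemma \ref{lemmasubmersions}(1) at $\dot{\gamma}(t)\in\mathscr{H}_{\gamma(t)}$.

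Second I would apply ${\rm d}\nu(v)$ and invoke the intertwining of the flows. Differentiating $\nu\circ\Phi_t^{F_1}|_\mathcal{N}=\Phi_t^{F_2}\circ\nu$ (Proposition \ref{propconormalreduction}) and restricting to $T_{\dot{\gamma}(t)}\mathcal{N}$ gives
\begin{equation*}
{\rm d}\nu(v)\circ {\rm d}\Phi_{-t}^{F_1}(\dot{\gamma}(t))={\rm d}\Phi_{-t}^{F_2}(\Phi_t^{F_2}(u))\circ {\rm d}\nu(\dot{\gamma}(t)).
\end{equation*}
Lemma \ref{lemmasubmersions}(2) at $\dot{\gamma}(t)$ identifies ${\rm d}\nu(\dot{\gamma}(t))|_{T_{\dot{\gamma}(t)}\mathscr{H}_{\gamma(t)}}$ with $i_{\Phi_t^{F_2}(u)}\circ {\rm d}f(\gamma(t))|_{T_{\dot{\gamma}(t)}\mathscr{H}_{\gamma(t)}}$; since ${\rm d}f(\gamma(t))$ restricts by (\ref{isometry2}) to an isometry onto $T_{f(\gamma(t))}N$, the image of this map is the full vertical fiber $\mathcal{V}_{\Phi_t^{F_2}(u)}TN$. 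Assembling the three pieces yields
\begin{equation*}
\lambda_v(\ell_v(t))={\rm d}\Phi_{-t}^{F_2}(\Phi_t^{F_2}(u))\,\mathcal{V}_{\Phi_t^{F_2}(u)}TN=\ell_u(t).
\end{equation*}

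I do not anticipate any real obstacle beyond careful bookkeeping, as the whole argument simply stitches together the two parts of Lemma \ref{lemmasubmersions} with the conjugation of the geodesic flows. The one subtle point worth checking is that $\ell_v(t)$ lies in the open set $\mathcal{U}$ on which $\lambda_v$ is smoothly defined (Lemma \ref{lemmasmooth}), i.e.\ $\ell_v(t)\cap (T_v\mathcal{N})^{\omega_{F_1}}=\{0\}$; by $\Phi_t^{F_1}$-invariance this reduces to $t=0$, where $\ell_v(0)=\mathcal{V}_vTM$, and a direct inspection shows that the characteristic directions of $\mathcal{N}$ meet $\mathcal{V}TM$ trivially.
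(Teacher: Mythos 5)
Your proof is correct and is essentially the paper's own argument written out in full: the paper's (one-line) proof likewise combines the flow statement of Proposition \ref{propconormalreduction} with the fact that the reduction maps send $\mathcal{V}_wTM$ to $\mathcal{V}_{f_*w}TN$ along the orbit, which is exactly what your explicit use of Lemma \ref{lemmasubmersions} and the intertwining ${\rm d}\nu\circ{\rm d}\Phi^{F_1}_{-t}={\rm d}\Phi^{F_2}_{-t}\circ{\rm d}\nu$ establishes. One small remark: the transversality $\ell_v(t)\cap T_v\mathcal{N}^{\omega}=\{0\}$ reduces by flow invariance to the statement $\mathcal{V}_{\dot{\gamma}(t)}TM\cap T_{\dot{\gamma}(t)}\mathcal{N}^{\omega}=\{0\}$ at the moving point $\dot{\gamma}(t)$ rather than literally to $t=0$, but this is the content of the paper's next lemma and is not needed for the set-theoretic identity proved here.
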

\begin{proof}
This follows from the statement about the flows in Proposition \ref{propconormalreduction} and the fact
that $\lambda_w(\mathcal{V}_wTM)=\mathcal{V}_{f_*w}TN$ for all $w\in\mathcal{N}$.
\end{proof}

\begin{lemma}
For all $t$, $\ell_v(t)\cap T_v\mathcal{N}^\omega=\{0\}$. 
\end{lemma}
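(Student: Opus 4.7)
My plan is to use flow-invariance of $\mathcal{N}$ to reduce the statement to a pointwise linear-algebra claim at each $w\in\mathcal{N}$, which will then follow by combining Lemma \ref{lemmasubmersions} with the isometry $(\ref{isometry2})$. I do not foresee a serious obstacle; the one delicate point is identifying the leaves of the characteristic foliation of $\mathcal{N}$ with the fibers of $\nu$, and this is essentially already encoded in Proposition \ref{propconormalreduction}.

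First, I would invoke Proposition \ref{propconormalreduction}: the flow $\Phi_t^{S_{F_1}}$ is a symplectic diffeomorphism preserving $\mathcal{N}$, so it preserves the characteristic distribution $T\mathcal{N}^\omega$. In particular ${\rm d}\Phi_{-t}^{S_{F_1}}(\dot\gamma(t))$ maps $T_{\dot\gamma(t)}\mathcal{N}^\omega$ onto $T_v\mathcal{N}^\omega$. Since by definition $\ell_v(t)={\rm d}\Phi_{-t}^{S_{F_1}}(\dot\gamma(t))(\mathcal{V}_{\dot\gamma(t)}TM)$ and $\dot\gamma(t)\in\mathcal{N}$ (the geodesic $\gamma$ is horizontal), this reduces the claim to the pointwise statement
\begin{equation*}
\mathcal{V}_wTM \cap T_w\mathcal{N}^\omega = \{0\} \qquad\text{for every } w\in\mathcal{N}.
\end{equation*}

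For the pointwise claim, I would pick $X$ in the intersection and set $m=\pi(w)$, $u=f_*w$. Since $\mathcal{N}$ is coisotropic, $T_w\mathcal{N}^\omega\subset T_w\mathcal{N}$, so by Lemma \ref{lemmasubmersions}$(1)$ we have $X\in T_w\mathscr{H}_m$. Next, because $\bar\nu:\mathcal{N}_R\to TN\setminus 0$ is a diffeomorphism (Proposition \ref{propconormalreduction}), the leaves of the characteristic foliation of $\mathcal{N}$ are precisely the fibers of $\nu$; hence $T_w\mathcal{N}^\omega=\ker{\rm d}\nu(w)$ and ${\rm d}\nu(w)X=0$. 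Finally, by Lemma \ref{lemmasubmersions}$(2)$ the restriction of ${\rm d}\nu(w)$ to $T_w\mathscr{H}_m$ equals $i_u\circ{\rm d}f(m)|_{T_w\mathscr{H}_m}$, and $(\ref{isometry2})$ says that ${\rm d}f(m)|_{T_w\mathscr{H}_m}$ is a linear isomorphism onto $T_{f(m)}N$. Therefore $X=0$, finishing the argument.
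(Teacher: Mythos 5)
Your argument is correct and follows essentially the same route as the paper: reduce via flow-invariance of $T\mathcal{N}^\omega$ to the pointwise claim $\mathcal{V}_wTM\cap T_w\mathcal{N}^\omega=\{0\}$, identify $T_w\mathcal{N}^\omega=\ker{\rm d}\nu(w)$ from Proposition \ref{propconormalreduction}, and conclude with Lemma \ref{lemmasubmersions} and the injectivity of ${\rm d}f(m)|_{T_w\mathscr{H}_m}$. Your explicit appeal to (\ref{isometry2}) merely spells out a step the paper leaves implicit.
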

\begin{proof}
Since $T\mathcal{N}$ is invariant by the derivative of $\Phi_t^{F_1}$ the same is true of $T\mathcal{N}^\omega$. Therefore,
${\rm d}\Phi_t^{F_1}(v)\bigl(\ell_v(t)\cap T_v\mathcal{N}^\omega\bigr)=\mathcal{V}_{\dot{\gamma}(t)}TM\cap T_{\dot{\gamma}(t)}\mathcal{N}^\omega$.
Let us show that $\mathcal{V}_wTM\cap T_w\mathcal{N}^\omega=\{0\}$ for all $w\in\mathcal{N}$. On one hand, from the first part of Proposition \ref{propconormalreduction},
we have $T_w\mathcal{N}^\omega={\rm ker}\hskip 1pt{\rm d}\nu(w)$. On the other hand, 
$\mathcal{V}_wTM\cap T_w\mathcal{N}^\omega\subset\mathcal{V}_wTM\cap T_w\mathcal{N}=T_w\mathscr{H}_m$ and ${\rm ker}\hskip 1pt {\rm d}\nu(w)|_{T_w\mathscr{H}_m}=\{0\}$ since,
by Lemma \ref{lemmasubmersions},
${\rm d}\nu(w)|_{T_w\mathscr{H}_m}=i_{f_*w}\circ {\rm d}f(m)|_{T_w\mathscr{H}_m}$. The result follows.
\end{proof}

The above lemma says that the pair $(\ell_v(t),T_v\mathcal{N})$ fulfils the conditions in $\S$\ref{sectionsymplecticreductionfanning} (condition {\bf ii.}
automatically holds since the Wronskian $W_v(t)$ is positive-definite). Therefore, $\ell_v(t)$ decomposes as
\begin{equation}\label{decompositionjacobisubmersion}
\ell_v(t) = \mathfrak{h}_v(t)\oplus\mathfrak{v}_v(t)
\end{equation}
and the O'Neill endomorphism ${\bf A}_v(t)$ is defined.

\begin{lemma}
Under the isomorphism $(\ref{isomorphism3})$, the decomposition $(\ref{decompositionjacobisubmersion})$ corresponds to the decomposition
\begin{equation}\label{decomposition3}
T_{\gamma(t)}M=T_{\dot{\gamma}(t)}\mathscr{H}_{\gamma(t)}\oplus\mathscr{V}_{\gamma(t)}. 
\end{equation}
\end{lemma}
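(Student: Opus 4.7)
The plan is to verify the two direct-summand correspondences separately: $\iota_{v,t}(\mathfrak{h}_v(t)) = T_{\dot{\gamma}(t)}\mathscr{H}_{\gamma(t)}$ and $\iota_{v,t}(\mathfrak{v}_v(t)) = \mathscr{V}_{\gamma(t)}$. For the first, I would exploit the invariance of the co-normal bundle $\mathcal{N}$ under the geodesic flow (Proposition \ref{propconormalreduction}); for the second, I would combine Proposition \ref{propwronskianfundamentaltensor} (the Wronskian corresponds to the fundamental tensor) with the $g_{F_1}$-orthogonal decomposition of $T_mM$ given in $\S$\ref{ssectionisometricsubs}.

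For the horizontal part: since $\gamma$ is a horizontal geodesic, $\dot{\gamma}(t)\in\mathcal{N}$ for all $t$, and the invariance of $\mathcal{N}$ under $\Phi_t^{S_{F_1}}$ implies that ${\rm d}\Phi_t^{S_{F_1}}(v)$ sends $T_v\mathcal{N}$ isomorphically onto $T_{\dot{\gamma}(t)}\mathcal{N}$. Recalling that $\mathfrak{h}_v(t)=\ell_v(t)\cap T_v\mathcal{N}$ and that ${\rm d}\Phi_t^{S_{F_1}}(v)$ maps $\ell_v(t)$ onto $\mathcal{V}_{\dot{\gamma}(t)}TM$, one obtains
\begin{equation}\nonumber
{\rm d}\Phi_t^{S_{F_1}}(v)\,\mathfrak{h}_v(t)=\mathcal{V}_{\dot{\gamma}(t)}TM\cap T_{\dot{\gamma}(t)}\mathcal{N},
\end{equation}
which by part (1) of Lemma \ref{lemmasubmersions} equals $T_{\dot{\gamma}(t)}\mathscr{H}_{\gamma(t)}$ as a subspace of $\mathcal{V}_{\dot{\gamma}(t)}TM$. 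Applying $i_{\dot{\gamma}(t)}^{-1}$ then identifies this with $T_{\dot{\gamma}(t)}\mathscr{H}_{\gamma(t)}$ as a subspace of $T_{\gamma(t)}M$, which is the required assertion.

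For the vertical part: by construction, $\mathfrak{v}_v(t)$ is the $W_v(t)$-orthogonal complement of $\mathfrak{h}_v(t)$ in $\ell_v(t)$. Proposition \ref{propwronskianfundamentaltensor} says that $\iota_{v,t}$ intertwines $W_v(t)$ with $g_{F_1}(\dot{\gamma}(t))$, so $\iota_{v,t}(\mathfrak{v}_v(t))$ is the $g_{F_1}(\dot{\gamma}(t))$-orthogonal complement of $T_{\dot{\gamma}(t)}\mathscr{H}_{\gamma(t)}$ inside $T_{\gamma(t)}M$. But the isometric submersion setup of $\S$\ref{ssectionisometricsubs} precisely asserts that $T_{\gamma(t)}M=T_{\dot{\gamma}(t)}\mathscr{H}_{\gamma(t)}\oplus\mathscr{V}_{\gamma(t)}$ is a $g_{F_1}(\dot{\gamma}(t))$-orthogonal decomposition, so this complement is exactly $\mathscr{V}_{\gamma(t)}$.

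The only subtle point — and the one I'd spell out carefully — is the double use of the notation $T_{\dot{\gamma}(t)}\mathscr{H}_{\gamma(t)}$: once as a subspace of $\mathcal{V}_{\dot{\gamma}(t)}TM$ (its natural home since $\mathscr{H}_{\gamma(t)}$ sits inside the fiber $T_{\gamma(t)}M$), and once as a subspace of $T_{\gamma(t)}M$ itself via the tautological identification $i_{\dot{\gamma}(t)}^{-1}$. Once this identification is made explicit, the two steps above fit together to give the full statement.
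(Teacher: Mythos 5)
Your proposal is correct and follows essentially the same route as the paper: the horizontal summand is identified using the flow-invariance of $\mathcal{N}$ together with Lemma \ref{lemmasubmersions}, and the vertical summand then follows because both decompositions are orthogonal for inner products that correspond under $(\ref{isomorphism3})$ by Proposition \ref{propwronskianfundamentaltensor}. Your explicit remark about the two readings of $T_{\dot{\gamma}(t)}\mathscr{H}_{\gamma(t)}$ (inside $\mathcal{V}_{\dot{\gamma}(t)}TM$ versus inside $T_{\gamma(t)}M$ via $i_{\dot{\gamma}(t)}^{-1}$) is a useful clarification the paper leaves implicit, but it does not change the argument.
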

\begin{proof}
From ${\rm d}\Phi_t^{F_1}(v)T_v\mathcal{N}=T_{\dot{\gamma}(t)}\mathcal{N}$ and Lemma \ref{lemmasubmersions}, we obtain
\begin{equation}\nonumber
{\rm d}\Phi_t^{F_1}(v)\mathfrak{h}_v(t) = {\rm d}\Phi_t^{F_1}(v)\bigl(\ell_v(t)\cap T_v\mathcal{N}\bigr) =
\mathcal{V}_{\dot{\gamma}(t)}TM\cap T_{\dot{\gamma}(t)}\mathcal{N}=T_{\dot{\gamma}(t)}\mathscr{H}_{\gamma(t)}.
\end{equation}
This proves the assertion about $\mathfrak{h}_v(t)$. The assertion about $\mathfrak{v}_v(t)$ then follows since the decompositions (\ref{decomposition3}) and 
(\ref{decompositionjacobisubmersion})
are orthogonal with respect to $g_{F_1}(\dot{\gamma}(t))$ and $W_v(t)$, respectively, and these inner products correspond under (\ref{isomorphism3}). 
\end{proof}

Given a unit vector $w\in T_v\mathscr{H}_m$, with $g_{F_1}(v)(v,w)=0$, let us denote $a=i_v(w)\in\mathfrak{h}_v(0)$ and $\overline{a}={\rm d}\nu(v)a\in\ell_u(0)$.
From (2) of Lemma \ref{lemmasubmersions} we have $\overline{a}=i_u(f_*w)$ and, since (\ref{isometry2}) is an isometry,
$F_2(f_*w)=1$ and $g_{F_2}(u)(u,f_*w)=0$. On the one hand, denoting $\Pi={\rm span}[v,w]$ then Theorem \ref{theoremjacobiflag} gives us
\begin{eqnarray}
K_{F_1}(v,\Pi) & = & W_v(0)\bigl({\bf K}_v(0)a,a\bigr)\nonumber\\
K_{F_2}(u,f_*\Pi) & = & W_u(0)\bigl({\bf K}_u(0)\overline{a},\overline{a}\bigr).\nonumber
\end{eqnarray}
On the other hand, it follows from Theorem \ref{oneillformula} that
\begin{equation}\nonumber
W_u(0)\bigl({\bf K}_u(0)\overline{a},\overline{a}\bigr) = W_v(0)\bigl({\bf K}_v(0)a,a\bigr) + 3W_v(0)\bigl({\bf A}_v(0)a,{\bf A}_v(0)a\bigr). 
\end{equation}

\noindent Therefore, 

\begin{theorem}
Let $A(t)$ correspond to ${\bf A}_v(t)$ under $(\ref{isomorphism3})$. Then,
\begin{equation}\nonumber
 K_{F_2}(u,f_*\Pi)  =  K_{F_1}(v,\Pi) + 3g_{F_1}(v)\bigl(A(0)w,A(0)w\bigr). 
\end{equation}
\end{theorem}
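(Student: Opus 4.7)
The plan is to simply assemble the pieces that the author has already put in place in the two equations displayed just before the theorem; the main work has already been done in Theorem \ref{oneillformula} and Proposition \ref{propwronskianfundamentaltensor}, and all that remains is a translation across the isomorphism $(\ref{isomorphism3})$.

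First I would verify that the hypotheses of the symplectic reduction machinery of $\S$\ref{subsectionreduction} apply to the pair $(\ell_v(t),T_v\mathcal{N})$: condition {\bf i.} is the content of the preceding lemma, and condition {\bf ii.} (non-degeneracy of $W_v(t)$ on $\ell_v(t)\cap T_v\mathcal{N}$) is automatic since, by Proposition \ref{propwronskianfundamentaltensor}, $W_v(t)$ corresponds under $(\ref{isomorphism3})$ to the positive-definite inner product $g_{F_1}(\dot{\gamma}(t))$. This legitimizes the decomposition $\ell_v(t)=\mathfrak{h}_v(t)\oplus\mathfrak{v}_v(t)$ and the definition of ${\bf A}_v(t)$.

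Next I would apply Theorem \ref{theoremjacobiflag} at $t=0$ to both Finsler manifolds, using that $\gamma$ is unit-speed so $F_1(v)=F_2(u)=1$; the flag curvature denominator is $1$ in both cases because $w$ is $g_{F_1}(v)$-unit and orthogonal to $v$, and because $(\ref{isometry2})$ shows that $f_*w$ is a $g_{F_2}(u)$-unit vector orthogonal to $u$. This yields
\begin{equation}\nonumber
K_{F_1}(v,\Pi)=W_v(0)\bigl({\bf K}_v(0)a,a\bigr),\qquad K_{F_2}(u,f_*\Pi)=W_u(0)\bigl({\bf K}_u(0)\overline{a},\overline{a}\bigr),
\end{equation}
exactly as written in the excerpt. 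Now Theorem \ref{oneillformula}, applied to the fanning curve $\ell_v(t)$ and the coisotropic subspace $T_v\mathcal{N}$, relates the right-hand sides:
\begin{equation}\nonumber
W_u(0)\bigl({\bf K}_u(0)\overline{a},\overline{a}\bigr)=W_v(0)\bigl({\bf K}_v(0)a,a\bigr)+3W_v(0)\bigl({\bf A}_v(0)a,{\bf A}_v(0)a\bigr).
\end{equation}

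Finally, I would translate the O'Neill term from the Jacobi curve back to $T_{\gamma(0)}M$. By Proposition \ref{propwronskianfundamentaltensor}, the isomorphism $\iota_{v,0}=i_v^{-1}$ intertwines $W_v(0)$ with $g_{F_1}(v)$. By the very definition given in the statement of the theorem, $A(0)$ is the endomorphism of $T_mM$ corresponding to ${\bf A}_v(0)$ under this isomorphism, and $w=\iota_{v,0}(a)$. Consequently
\begin{equation}\nonumber
W_v(0)\bigl({\bf A}_v(0)a,{\bf A}_v(0)a\bigr)=g_{F_1}(v)\bigl(A(0)w,A(0)w\bigr),
\end{equation}
and combining the three displayed equations gives the desired formula. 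The only step that requires any genuine care is confirming that the horizontal/vertical decomposition of $\ell_v(0)$ corresponds via $\iota_{v,0}$ to the decomposition $T_v\mathscr{H}_m\oplus\mathscr{V}_m$ (so that $a\in\mathfrak{h}_v(0)$ really does correspond to $w\in T_v\mathscr{H}_m$), but this has already been established in the preceding lemma.
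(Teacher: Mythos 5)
Your proposal is correct and follows essentially the same route as the paper: verify conditions {\bf i.} and {\bf ii.} for the pair $(\ell_v(t),T_v\mathcal{N})$, apply Theorem \ref{theoremjacobiflag} to both flags, invoke Theorem \ref{oneillformula}, and translate the O'Neill term back to $T_mM$ via $\iota_{v,0}$ using Proposition \ref{propwronskianfundamentaltensor} and the lemma identifying $\mathfrak{h}_v(t)\oplus\mathfrak{v}_v(t)$ with $T_{\dot\gamma(t)}\mathscr{H}_{\gamma(t)}\oplus\mathscr{V}_{\gamma(t)}$. The only ingredient you leave implicit is the proposition that $\ell_u=\lambda_v\circ\ell_v$ (so that the reduced curve's invariants really are those of the base Jacobi curve), but the paper relies on it in exactly the same tacit way.
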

Observe that the expressions (\ref{oneill1}) and (\ref{oneill2}) and Proposition \ref{propdynamicalcovariantderivative} imply that
\begin{equation}\nonumber
A(t)V(t)  =  P_\mathscr{V}(t)\Bigl(\frac{{\rm D}^{\dot{\gamma}}}{{\rm d}t}P_\mathscr{H}(t)V(t)\Bigr) 
- P_\mathscr{H}(t)\Bigl( \frac{{\rm D}^{\dot{\gamma}}}{{\rm d}t}P_\mathscr{V}(t)V(t) \Bigr), 
\end{equation}
where $V\in\mathfrak{X}(\gamma)$, and  $P_\mathscr{H}(t)$ and $P_\mathscr{V}(t)$ are the projections onto $T_{\dot{\gamma}(t)}\mathscr{H}_{\gamma(t)}$ 
and $\mathscr{V}_{\gamma(t)}$, respectively,
with respect to $(\ref{decomposition3})$.

\section{A dynamical characterization of the sign of flag curvature}

\begin{definition}
A Legendrian distribution $\mathcal{L}$ on $\Sigma_FM$ is said to have the {\it positive} (resp. {\it negative}) {\it twist property} if, for every 
$v\in\Sigma_FM$, the curve of Lagrangian subspaces
\begin{equation}\label{curve1}
t \mapsto {\rm d}\Phi_{-t}^F(\dot{\gamma}(t))\bigl(\mathcal{L}_{\dot{\gamma}(t)}\bigr) \in \Lambda\bigl({\rm ker}(\alpha_F)_v\bigr), 
\end{equation}
where $\gamma(t)$ is the geodesic with $\dot{\gamma}(0)=v$, has positive-definite (resp. negative-definite) Wronskian for all $t$.
\end{definition}

\begin{remark}
Pointing toward the Maslov index theory, the above property has the following reformulation:
over $\Sigma_FM$ there is a fiber bundle $\Lambda(\Sigma_FM)\rightarrow\Sigma_FM$ whose fiber over a given $v$ is $\Lambda({\rm ker}(\alpha_F)_v)$. Oberve that
the flow $\Phi_t^F$ lifts in a canonical way to a flow 
$\widehat{\Phi}^F_t : \Lambda(\Sigma_FM) \rightarrow \Lambda(\Sigma_FM)$.
Given a Legendrian distribution $\mathcal{L}$ on $\Sigma_FM$, its {\it Maslov cycle} is the subset $\Lambda_{\geq 1}(\mathcal{L})\subset\Lambda(\Sigma_FM)$,
\begin{equation}\nonumber
\Lambda_{\geq 1}(\mathcal{L}) = \big\{ (v,\ell)\hskip 1pt:\hskip 1pt \ell\cap \mathcal{L}_v\neq\{0\} \big\}. 
\end{equation}
This is a stratified submanifold of co-dimension 1 with a natural co-orientation given by using the identification (\ref{identificationwronskian}). 
The positive twist property for $\mathcal{L}$ is then equivalent to requiring that, for all $\ell\in\Lambda(\Sigma_FM)$, if the flow line of 
$\widehat{\Phi}^F_t$ through $\ell$ crosses $\Lambda_{\geq 1}(\mathcal{L})$, it does so 
pointing toward the co-orientation of $\Lambda_{\geq 1}(\mathcal{L})$. 
\end{remark}

We shall prove

\begin{proposition}
$(M,F)$ has positive (resp. negative) flag curvature if, and only if, the horizontal bundle $\mathcal{H}\Sigma_FM$ (see (\ref{horizontalcontact})) has 
the positive (resp. negative) twist property.
\end{proposition}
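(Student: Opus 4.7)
The plan is to identify the curve~(\ref{curve1}) with $\mathcal{L}=\mathcal{H}\Sigma_FM$ as the horizontal curve $h_v^{\rm c}(t)$ of the Jacobi curve $\ell_v^{\rm c}(t)$, and then to express the sign of its Wronskian in terms of the curvature endomorphism ${\bf R}_{\dot\gamma(t)}$ via the dictionary developed in Section~\ref{sectionjacobicurvesfinsler}. By the description at the end of that section one has $\mathcal{H}_{\dot\gamma(t)}\Sigma_F M = {\rm d}\Phi_t^F(v)\,h_v^{\rm c}(t)$, so the curve $t\mapsto {\rm d}\Phi_{-t}^F(\dot\gamma(t))\bigl(\mathcal{H}_{\dot\gamma(t)}\Sigma_F M\bigr)$ is exactly $h_v^{\rm c}(t)$, and controlling its Wronskian will be my task.

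Next I would establish the following general linear-algebraic identity for any fanning curve $\ell(t)$ in $\Lambda(V)$ with horizontal curve $h(t)$:
\begin{equation*}
W_h(t)\bigl({\bf H}(t)u,\,{\bf H}(t)v\bigr) \;=\; W(t)\bigl(u,\,{\bf K}(t)v\bigr),\qquad u,v\in\ell(t).
\end{equation*}
This should follow from a direct computation in a normal frame $\mathcal{A}(t)$: then $\mathcal{H}(t)=\dot{\mathcal{A}}(t)$ is a frame for $h(t)$, and Proposition~\ref{propjacobiendo} together with normality gives $\ddot{\mathcal{A}}(t) = -\mathcal{A}(t)\,[{\bf K}|_\ell]$. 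Substituting this into $W_h(\dot a_i,\dot a_j)=\omega(\dot a_i,\ddot a_j)$ and using $\omega(a_k,\dot a_i)=W(t)(a_k,a_i)$ together with the symmetry of $W$ yields the claim.

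I would then apply this identity to $\ell_v^{\rm c}(t)$ and invoke the isomorphism~(\ref{isomorphism5}), which carries $W_v^{\rm c}(t)$ to the restriction of $g_F(\dot\gamma(t))$ to ${\rm ker}\,g_F(\dot\gamma(t))(\dot\gamma(t),\cdot)$ and ${\bf K}_v^{\rm c}(t)|_{\ell_v^{\rm c}(t)}$ to the corresponding restriction of ${\bf R}_{\dot\gamma(t)}$. Hence the Wronskian of $h_v^{\rm c}(t)$ is positive- (respectively negative-) definite if and only if the bilinear form $(u,w)\mapsto g_F(\dot\gamma(t))\bigl(u,{\bf R}_{\dot\gamma(t)}w\bigr)$ is positive- (resp. negative-) definite on the $g_F(\dot\gamma(t))$-orthogonal complement of $\dot\gamma(t)$; by the flag-curvature formula of Section~\ref{subsubcovcurv}, this is exactly the condition of positive (resp. negative) flag curvature at $\dot\gamma(t)$. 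Running over all $v\in\Sigma_FM$ and all $t$ delivers the claimed equivalence.

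The main obstacle I anticipate is the second step: one must carefully track the identification $T_\ell\Lambda(V)\cong{\rm Bil}_{\rm sym}(\ell)$ that defines $W_h$ and then transport the resulting bilinear form back to $\ell(t)$ via ${\bf H}(t)^{-1}={\bf F}(t)|_{h(t)}$. Once the displayed identity is secured, the remainder of the argument is a routine unpacking of the dictionary between Jacobi-curve invariants and Finsler curvature already developed in the paper.
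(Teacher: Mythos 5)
Your proposal is correct and follows essentially the same route as the paper: identify the curve (\ref{curve1}) with the horizontal curve $h_v^{\rm c}(t)$, reduce to the general identity $W_h(t)\bigl({\bf H}(t)u,{\bf H}(t)v\bigr)=W(t)\bigl({\bf K}(t)u,v\bigr)$ (your version differs only by the $W$-symmetry of ${\bf K}|_{\ell}$), prove it in a normal frame using $\mathcal{H}(t)=\dot{\mathcal{A}}(t)$ and $\ddot{\mathcal{A}}(t)=-(1/2)\mathcal{A}(t)\{\mathcal{A}(t),t\}$, and then transport everything to the Finsler data via (\ref{isomorphism5}). The only difference is cosmetic: you carry out the normal-frame computation intrinsically with $\omega$, whereas the paper writes the same computation in matrix form with the standard complex structure ${\bf J}$.
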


Positiveness (resp. negativeness) of the flag curvature means positiveness (resp. negativeness) of the quadratic form
\begin{equation}\nonumber
g_F(\dot{\gamma}(t))\bigl({\bf R}_{\dot{\gamma}(t)}\cdot\hskip 1pt,\hskip 1pt \cdot\bigr) : 
{\rm ker}\hskip 1pt g_F(\dot{\gamma}(t))\bigl(\dot{\gamma}(t)\hskip 1pt,\hskip 1pt\cdot\bigr) \rightarrow\mathds{R}
\end{equation}
for all $t$ and $v$. Recall $\S$\ref{sectionjacobicurvesfinsler}: the curve (\ref{curve1}) is 
the horizontal curve $h_v^{\rm c}(t)$ of $\mathscr{P}^{\rm c}$ when $\mathcal{L}=\mathcal{H}\Sigma_FM$, and the above quadratic form
corresponds to $W_v^{\rm c}(t)\bigl({\bf K}_v^{\rm c}(t)\cdot\hskip 1pt , \hskip 1pt \cdot\bigr)$ under (\ref{isomorphism5}).
Therefore, the above proposition follows at once of the following general property of fanning curves.

\begin{proposition}
Let $W_h(t)$ denote the Wronskian of the horizontal curve $h(t)$ of a fanning curve $\ell(t)\in\Lambda(V)$. 
Then, given $t$ and $u,v\in\ell(t)$,
\begin{equation}\nonumber
W_h(t)\bigl({\bf H}(t)u,{\bf H}(t)v\bigr)=W(t)\bigl({\bf K}(t)u,v\bigr). 
\end{equation}
\end{proposition}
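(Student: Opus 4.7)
The plan is to verify the identity frame by frame, choosing a \emph{normal} frame $\mathcal{A}(t)=(a_1(t),\dots,a_n(t))$ for $\ell(t)$, for which the horizontal curve has the especially clean frame $\mathcal{H}(t)=\dot{\mathcal{A}}(t)$ and the Schwarzian reduces to $\{\mathcal{A}(t),t\}=2Q(t)$, where $Q(t)$ comes from the defining relation $\ddot{\mathcal{A}}(t)+\mathcal{A}(t)Q(t)={\rm O}$.

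First I would write out both sides in this frame. For $u=\sum u_i a_i(t)$ and $v=\sum v_j a_j(t)$ in $\ell(t)$, set $G_{ij}(t)=\omega(a_i(t),\dot{a}_j(t))$. Since $\ell(t)$ is Lagrangian, differentiating $\omega(a_i,a_j)=0$ gives $G_{ij}=G_{ji}$, and by the very definition of the Wronskian one has
\begin{equation}\nonumber
W(t)(u,v)=u^{T}G(t)v.
\end{equation}
By Proposition~\ref{propjacobiendo}, since the frame is normal the block of ${\bf K}(t)$ acting on $\ell(t)$ in the basis $\mathcal{A}(t)$ is exactly $Q(t)$, so ${\bf K}(t)u$ has coordinates $Q(t)u$ and consequently
\begin{equation}\nonumber
W(t)({\bf K}(t)u,v)=u^{T}Q(t)^{T}G(t)v.
\end{equation}

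Next I would compute the left-hand side. Since $\mathcal{H}(t)=\dot{\mathcal{A}}(t)$, the map ${\bf H}(t)$ sends $\sum u_i a_i(t)$ to $\sum u_i\dot{a}_i(t)$, and the derivative of the frame $\dot{\mathcal{A}}(t)$ for $h(t)$ is $\ddot{\mathcal{A}}(t)=-\mathcal{A}(t)Q(t)$. Plugging this into the defining formula for the Wronskian of $h(t)$ and using the symmetry $G=G^{T}$,
\begin{equation}\nonumber
W_h(t)({\bf H}(t)u,{\bf H}(t)v)
=\omega\!\left(\textstyle\sum_i u_i\dot{a}_i,\;-\sum_j (Qv)_j a_j\right)
=u^{T}G(t)Q(t)v.
\end{equation}

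Finally I would invoke Proposition~\ref{propfanninglagr}(3): the restriction of ${\bf K}(t)$ to $\ell(t)$ is $W(t)$-symmetric, which in coordinates reads $G(t)Q(t)=Q(t)^{T}G(t)$. Comparing the two displays then yields the claimed equality. The only real bookkeeping hazard is keeping the transposes straight when translating between the endomorphism ${\bf K}(t)$ and its matrix $Q(t)$ and between $W$ and its Gram matrix $G$; the symmetry statement from Proposition~\ref{propfanninglagr}(3) is precisely what closes the gap, so there is no genuine obstacle beyond this routine check.
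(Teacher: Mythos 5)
Your proof is correct and follows essentially the same route as the paper: pass to a normal frame, so that $\mathcal{H}(t)=\dot{\mathcal{A}}(t)$ and $\ddot{\mathcal{A}}(t)=-\mathcal{A}(t)Q(t)$, and compare the Gram matrices of both bilinear forms in that frame. The only cosmetic difference is that you work with the abstract Gram matrix $G_{ij}=\omega(a_i,\dot{a}_j)$ and invoke Proposition \ref{propfanninglagr}(3) explicitly, whereas the paper writes the same computation in linear symplectic coordinates with the standard ${\bf J}$ and relies, as you do, on the existence of a normal frame through any prescribed basis of $\ell(\tau)$.
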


\begin{proof}
By choosing linear symplectic coordinates, we can suppose $(V,\omega)=(\mathds{R}^{2n},\omega_0)$ where 
$\omega_0({\bf u},{\bf v})={\bf u}^T{\bf J}{\bf v}$ and {\bf J} is the standard complex structure of $\mathds{R}^{2n}$. 
Given a frame $\mathcal{A}(t)$ for $\ell(t)$, the matrices of $W(t)$ and $W_h(t)$ in the basis $\mathcal{A}(t)$ and $\mathcal{H}(t)$ are, respectively,
\begin{equation}\nonumber
\mathcal{A}(t)^T{\bf J}\dot{\mathcal{A}}(t)=-\dot{\mathcal{A}}(t)^T{\bf J}\mathcal{A}(t) ~~,~~\mathcal{H}(t)^T{\bf J}\dot{\mathcal{H}}(t);
\end{equation}
where above  we use that $\mathcal{A}(t)^T{\bf J}\mathcal{A}(t)=0$.
If the frame $\mathcal{A}(t)$ is normal, then $\mathcal{H}(t)=\dot{\mathcal{A}}(t)$ and 
$\ddot{\mathcal{A}}(t)=-(1/2)\mathcal{A}(t)\{\mathcal{A}(t),t\}$ and, therefore, 
\begin{equation}\nonumber
\mathcal{H}(t)^T{\bf J}\dot{\mathcal{H}}(t)=-(1/2)\dot{A}(t)^T{\bf J}\mathcal{A}(t)\{\mathcal{A}(t),t\}.
\end{equation}
On the other hand, since the matrix of ${\bf K}(t)|_{\ell(t)}$ in the basis $\mathcal{A}(t)$ is $(1/2)\{\mathcal{A}(t),t\}$ (cf. Proposition \ref{propjacobicurvatureendo}), 
the matrix of $W(t)\bigl({\bf K}(t)|_{\ell(t)}\cdot,\cdot\bigr)$ in the basis $\mathcal{A}(t)$ is given by
$-(1/2)\dot{\mathcal{A}}(t)^T{\bf J}A(t)\{\mathcal{A}(t),t\}$. This shows that the matrix of $W(t)\bigl({\bf K}(t)|_{\ell(t)}\cdot,\cdot\bigr)$ in a basis
$\mathcal{A}(t)$ is equal to the matrix of $W_h(t)$ in the basis $\mathcal{H}(t)={\bf H}(t)\mathcal{A}(t)$ provided that the frame $\mathcal{A}(t)$ is normal.
The result now follows from the fact that given $\tau$ and a basis $\mathcal{B}$ of $\ell(\tau)$, there is a unique normal frame
$\mathcal{A}(t)$ with $\mathcal{A}(\tau)=\mathcal{B}$.
\end{proof}

\section{The flag curvature of a class of projectively related Finsler metrics}

\subsection{Statement of the result}

Let $(M,F_0)$ be a Finsler manifold and $\theta$ a smooth 1-form on $M$ such that 
\begin{itemize}
\item[$(i)$] ${F_0}^*(\theta_m)<1$ for all $m\in M$,
\item[$(ii)$] ${\rm d}\theta=0$.
\end{itemize}
The first condition ensures that the following deformation of $F_0$,
\begin{equation}\nonumber
F=F_0+\theta:TM\rightarrow\mathds{R} ,
\end{equation}
defines a Finsler metric on $M$ (this follows, for instance, from the proof of Lemma \ref{lemma1} below), and the closedness of $\theta$ implies that $F$ and $F_0$ 
share the same unparametrized geodesics
since  the associated arc-length functionals have the same extremals. 
\par We shall prove the following relation between the flag curvatures of $F_0$ and $F$.

\begin{theorem}\label{thmprojective}
The map 
\begin{equation}\label{mappsi}
\Psi(v)=\mathscr{L}_{F_0}^{-1}\bigl(\mathscr{L}_F(v)-\theta\bigr)
\end{equation}
restricts to a diffeomorphism from $\Sigma_FM$ onto $\Sigma_{F_0}M$.
Given a $2$-plane $\Pi\subset T_mM$, $\Pi={\rm span}[{\bf v},{\bf w}]$, where ${\bf v}\in\Sigma_FM$ and 
${\bf w}\in T_{\bf v}(\Sigma_FM\cap T_mM)={\rm ker}\hskip 1pt g_F({\bf v})({\bf v},\cdot)$, let $\widetilde{\Pi}={\rm span}[{\bf u},\widetilde{\bf w}]\subset T_mM$ 
be the $2$-plane where 
${\bf u}=\Psi({\bf v})$ and $\widetilde{\bf w}={\rm d}\Psi({\bf v}){\bf w}$. Denoting by $\phi:\Sigma_{F_0}M\rightarrow\mathds{R}$ the function 
$\phi(z)=1/(1+\theta(z))$, then
\begin{equation}\label{curvaturesprojectivechange}
K_F({\bf v},\Pi) = \phi({\bf u})^2K_{F_0}({\bf u},\widetilde{\Pi})-\frac{1}{2}\Bigl[\frac{1}{2}S_{F_0}(\phi)^2-
\phi S_{F_0}(S_{F_0}(\phi))\Bigr]({\bf u}),
\end{equation}
where $S_{F_0}$ is the geodesic spray of $F_0$. Alternatively, if $h$ is a primitive for $\theta$ around $m$ and if we let $f(t)=t+h(\gamma_{\bf u}(t))$,
where $\gamma_{\bf u}$ is the $F_0$-geodesic
with $\dot{\gamma}_{\bf u}(0)={\bf u}$, then 
\begin{equation}\nonumber
K_F({\bf v},\Pi) = \frac{1}{\dot{f}(0)^2}\Bigl[K_{F_0}({\bf u},\widetilde{\Pi})-\frac{1}{2}\{f(t),t\}|_{t=0}\Bigr], 
\end{equation} 
where $\{f(t),t\} = (d/dt)(\dot{f}^{-1}\ddot{f})-(1/2)(\dot{f}^{-1}\ddot{f})^2 $ is the Schwarzian derivative of $f(t)$.
\end{theorem}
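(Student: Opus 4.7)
The plan is to realize the projective equivalence between $F$ and $F_0$ as a symplectomorphism on the cotangent bundle, and translate the resulting reparametrization into the curvature formula via Proposition \ref{transformationproperties}.

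Step 1. Consider the fiberwise translation $\tau_\theta : T^*M\setminus 0\to T^*M\setminus 0$, $\tau_\theta(\xi)=\xi-\theta_{\tau(\xi)}$. A direct calculation gives $\tau_\theta^*\alpha=\alpha-\pi^*\theta$, so $\tau_\theta^*\omega=\omega-\pi^*d\theta=\omega$ because $\theta$ is closed; hence $\tau_\theta$ is a global symplectomorphism of $(T^*M\setminus 0,\omega)$ and $d\tau_\theta$ preserves the vertical distribution $\mathcal{V}T^*M$. Since $\Psi=\mathscr{L}_{F_0}^{-1}\circ\tau_\theta\circ\mathscr{L}_F$, the identity $\mathscr{L}_F({\bf v})=F({\bf v})\bigl[\mathscr{L}_{F_0}({\bf v})/F_0({\bf v})+\theta\bigr]$ (a consequence of $F=F_0+\theta$ and Euler homogeneity) shows that $\Psi({\bf v})={\bf v}/F_0({\bf v})\in\Sigma_{F_0}M$ when $F({\bf v})=1$, with inverse ${\bf u}\mapsto\phi({\bf u}){\bf u}$; this proves the first assertion.

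Step 2. The Hamiltonians $H_F=(F^*)^2/2$ and $H_{F_0}=(F_0^*)^2/2$ both equal $1/2$ on $\Sigma_{F_0}^*M=\tau_\theta(\Sigma_F^*M)$, since $H_F\circ\tau_\theta^{-1}\equiv 1/2$ there by Step 1. As $\Sigma_{F_0}^*M$ is a common contact-type level set, the Hamiltonian vector fields $(\tau_\theta)_*X_{H_F}=X_{H_F\circ\tau_\theta^{-1}}$ and $X_{H_{F_0}}$ both generate the characteristic line field of $\Sigma_{F_0}^*M$ and are therefore positive scalar multiples along it. Along the flow-line through $\xi'=\mathscr{L}_{F_0}({\bf u})$, the scalar is exactly the derivative of the reparametrization $f(t)=t+h(\gamma_{\bf u}(t))$ from $F_0$-arclength to $F$-arclength, where $h$ is a local primitive of $\theta$: this is immediate from $\int_0^t(F_0(\dot\gamma_{\bf u})+\theta(\dot\gamma_{\bf u}))\,dt'=t+h(\gamma_{\bf u}(t))-h(m)$ together with the closedness of $\theta$.

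Step 3. Combining the above, the Jacobi curve $\ell_\xi^F(t)$ at $\xi=\mathscr{L}_F({\bf v})$ is carried by the linear symplectic isomorphism ${\bf T}:=d\tau_\theta(\xi)$ to a reparametrization $\ell_{\xi'}^{F_0}(s(t))$ of the $F_0$-Jacobi curve, where $s=f^{-1}$ is normalized so that $s(0)=0$. Applying Proposition \ref{transformationproperties} to both transformations, the Jacobi endomorphism satisfies
\[
{\bf K}_\xi^F(0)={\bf T}^{-1}\bigl(\dot{s}(0)^2\,{\bf K}_{\xi'}^{F_0}(0)+\tfrac{1}{2}\{s,t\}\big|_{t=0}\,{\bf I}\bigr){\bf T},
\]
and the Wronskian pulls back correspondingly. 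Evaluating Theorem \ref{theoremjacobiflag} on both sides, using the cotangent-side identification of the Wronskian with the fundamental tensor described at the end of $\S$\ref{sectionjacobicurvesfinsler}, yields
\[
K_F({\bf v},\Pi)=\dot{s}(0)^2\,K_{F_0}({\bf u},\widetilde{\Pi})+\tfrac{1}{2}\{s,t\}\big|_{t=0}.
\]
The inverse-Schwarzian identity $\{f^{-1},s\}|_{s=0}=-\{f,t\}|_{t=0}/\dot{f}(0)^2$, together with $\dot{s}(0)=1/\dot{f}(0)$, converts this into the second form stated in the theorem. The first form then follows from $\dot{f}(0)=1/\phi({\bf u})$ and a routine expansion of $\{f,t\}|_{t=0}$ via $S_{F_0}(\phi)({\bf u})=(d/dt)\phi(\dot\gamma_{\bf u}(t))|_{t=0}$ and its second derivative.

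The main obstacle is the careful justification in Step 3 that the pushforward of the $F$-Jacobi curve under $d\tau_\theta$ is, as a curve in $\Lambda(T_{\xi'}T^*M)$, genuinely an orbit-wise reparametrization of the $F_0$-Jacobi curve --- the subtlety being that the proportionality factor between the Hamiltonian fields on $\Sigma_{F_0}^*M$ is a function that varies both along and transversely to the orbit through $\xi'$, so that the reparametrization is not globally uniform. Once this is secured via the preservation of verticals by $d\tau_\theta$ and the characteristic nature of both flows, the remaining Schwarzian manipulations reducing the two displayed formulas in the theorem are routine.
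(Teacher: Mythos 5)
Your Steps 1 and 2 are sound, and your overall strategy is in essence the paper's own (the Legendre map $\mathscr{L}_\mathfrak{m}$ used there is precisely the translation $\xi\mapsto\xi+\theta$ composed with $\mathscr{L}_{F_0}$, and your Step 2 is Lemma \ref{lemmareparametrization} transported to the cotangent side); the closing Schwarzian manipulations also match. The problem is Step 3, which you flag as ``the main obstacle'' but do not close, and which is in fact false as stated. The identity ${\bf T}\ell^F_\xi(t)=\ell^{F_0}_{\xi'}(s(t))$ cannot hold in the full Grassmannian $\Lambda(T_{\xi'}T^*M)$: the vector fields $(\tau_\theta)_*S^*_F=X_{H_F\circ\tau_\theta^{-1}}$ and $S^*_{F_0}$ are proportional only \emph{on} the common level set $\Sigma_{F_0}^*M$ (the translated Hamiltonian is no longer positively $2$-homogeneous, so nothing relates the two dynamics off that hypersurface), whereas the Jacobi curve is built from the full linearized flow acting on all of $\mathcal{V}T^*M$, which contains the radial direction $C^*$ transverse to $\Sigma_{F_0}^*M$; on such transverse vectors the two linearizations are unrelated. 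Indeed, by Proposition \ref{propcontactrelation} the $F_0$-curve contains the line ${\rm span}[C^*_{\xi'}-sS^*_{F_0}]$, a consequence of the homogeneity $[C^*,S^*_{F_0}]=S^*_{F_0}$, and the image under ${\rm d}\tau_\theta$ of the corresponding line in $\ell^F_\xi(t)$ is a different family of lines. Moreover, even for directions tangent to $\Sigma_{F_0}^*M$, reparametrizing a flow by the non-constant factor $\phi$ changes its linearization by a term along the flow direction --- exactly the rank-one correction $\zeta\otimes(S_{F_0})_{\bf u}$ of (\ref{derivativesflows}). So Proposition \ref{transformationproperties} may not be applied to the full curves, and ``preservation of verticals plus the characteristic nature of both flows'' does not repair this.

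What the paper does, and what your argument is missing, is to work at the contact level: use the contact Jacobi curves $\ell^{\rm c}_{\bf v}(t)\in\Lambda({\rm ker}(\alpha_F)_{\bf v})$ and $\ell^{\rm c}_{\bf u}(t)\in\Lambda({\rm ker}(\alpha_{F_0})_{\bf u})$, which still compute the flag curvature by \S\ref{sectionjacobicurvesfinsler} and Proposition \ref{propcontactrelation}; prove $\Psi_*S_F=\phi S_{F_0}$ on the unit sphere bundles; write the derivative of the reparametrized flow as the reparametrized derivative plus $\zeta\otimes(S_{F_0})_{\bf u}$; and then compose ${\rm d}\Psi({\bf v})$ with the projection ${\rm pr}_{\bf u}$ whose kernel is spanned by $(S_{F_0})_{\bf u}$, which kills the error term and is symplectic between the contact hyperplanes. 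Only for ${\bf T}={\rm pr}_{\bf u}\circ{\rm d}\Psi({\bf v})$ does one obtain ${\bf T}\ell^{\rm c}_{\bf v}(t)=\ell^{\rm c}_{\bf u}(\eta_{\bf u}(t))$ (your $s=f^{-1}$ is the paper's $\eta_{\bf u}$), after which Proposition \ref{transformationproperties} and your Schwarzian computations do give both displayed formulas. A smaller omission: you never check that your ${\bf T}$ sends the vertical vector representing ${\bf w}$ to the one representing $\widetilde{\bf w}$; in the paper this is immediate because ${\rm pr}_{\bf u}$ restricts to the identity on verticals.
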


\begin{remark}
It follows easily from the definition of the map $\Psi$ that $\widetilde{\bf w}$ is determined by the equality
$g_{F_0}({\bf u})(\widetilde{\bf w},\cdot)=g_F({\bf v})({\bf w},\cdot)$.
\end{remark}

\subsection{Preliminaries}

Throughout, $S_{F_0}$, $S_F$, and $S_{F_0}^*$, $S_F^*$, shall denote the geodesic sprays and co-geodesic vector fields, respectively, of $F_0$ and
$F$, viewed as vector fields on $\Sigma_{F_0}M$, $\Sigma_FM$, and $\Sigma_{F_0}^*M$, $\Sigma_F^*M$.

\begin{lemma}\label{lemma1}
We have that 
\begin{equation}\label{levelmagnetic}
\Sigma_F^*M=\{\xi\in T^*M\hskip 1pt : \hskip 1pt F_0^*( \xi-\theta)=1\}=\theta+\Sigma_{F_0}^*M 
\end{equation}
\end{lemma}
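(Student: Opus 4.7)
The plan is to prove both equalities via the Legendre transformation, exploiting that $\Sigma_F^*M = \mathscr{L}_F(\Sigma_FM)$ and $\Sigma_{F_0}^*M = \mathscr{L}_{F_0}(\Sigma_{F_0}M)$. The key input is a clean identity relating $\mathscr{L}_F$ and $\mathscr{L}_{F_0}$ on the unit sphere.

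First I would compute, for $v \in T_mM \setminus 0$, the Legendre transformation $\mathscr{L}_F(v) = (1/2)d_fF^2(v)$. Since $\theta_m$ is linear on $T_mM$, its fiber derivative equals $\theta_m$ itself; the chain rule applied to $F^2 = (F_0 + \theta_m)^2$ then gives
\[
\mathscr{L}_F(v) \,=\, F(v)\bigl[\, d_fF_0(v) + \theta_m \,\bigr].
\]
Euler's identity together with $1$-homogeneity of $\mathscr{L}_{F_0}$ yields $d_fF_0(v) = \mathscr{L}_{F_0}(v/F_0(v))$, so on the unit sphere $\Sigma_FM$, where $F(v) = 1$ forces $F_0(v) > 0$, this reduces to
\[
\mathscr{L}_F(v) \;=\; \mathscr{L}_{F_0}(v_0) + \theta_m, \qquad v_0 := v/F_0(v) \in \Sigma_{F_0}M.
\]

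The fiber-preserving map $v \mapsto v_0$ is a diffeomorphism $\Sigma_FM \to \Sigma_{F_0}M$, with inverse $v_0 \mapsto v_0/F(v_0)$; this inverse is well-defined because condition (i) guarantees that $F = F_0 + \theta$ is strictly positive on $TM \setminus 0$. Composing with the diffeomorphism $\mathscr{L}_{F_0} : \Sigma_{F_0}M \to \Sigma_{F_0}^*M$, the displayed identity shows that $\mathscr{L}_F - \theta$ restricts to a diffeomorphism $\Sigma_FM \to \Sigma_{F_0}^*M$. Hence $\Sigma_F^*M = \theta + \Sigma_{F_0}^*M$, and the first equality in the lemma is simply the unpacking $\Sigma_{F_0}^*M = (F_0^*)^{-1}(1)$.

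There is no serious obstacle: the only substantive computation is the formula for $\mathscr{L}_F$ above, and this is clean precisely because $\theta$ is linear on each fiber, so its perturbation contributes a term whose fiber derivative is $\theta$ itself. Everything else is bookkeeping with positive homogeneity and the standing assumption (i) that keeps $F$ a genuine Finsler metric.
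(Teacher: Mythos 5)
Your proof is correct, and it takes a genuinely different route from the paper's. The paper argues on the cotangent side by convex duality: condition $(i)$ is used to say that the fiberwise translate $\theta+\Sigma_{F_0}^*M$ is the unit co-sphere bundle of some Finsler metric $\hat F$, and then $\hat F$ is computed through its support function, $\hat F(v)=(\hat F^*)^*(v)=\sup\{\xi(v):\xi\in\theta_m+\Sigma_{F_0}^*M\cap T_m^*M\}=\theta_m(v)+F_0(v)=F(v)$, which gives $\Sigma_F^*M=\Sigma_{\hat F}^*M=\theta+\Sigma_{F_0}^*M$ without ever differentiating anything. You instead work on the tangent side: from ${\rm d}_f(F^2)=2F({\rm d}_fF_0+\theta)$ you get $\mathscr{L}_F(v)=\mathscr{L}_{F_0}(v/F_0(v))+\theta_m$ on $\Sigma_FM$, and then you transport $\Sigma_FM$ onto $\Sigma_{F_0}^*M$ via the radial diffeomorphism $v\mapsto v/F_0(v)$ together with $F^*\circ\mathscr{L}_F=F$. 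The trade-off is this: the paper's argument never uses the Legendre transformation of $F$, so it does not presuppose that $F=F_0+\theta$ is a Finsler metric --- indeed the authors invoke precisely this proof to justify that $(i)$ makes $F$ Finsler --- whereas your argument uses $\mathscr{L}_F$ (hence positivity and strong convexity of $F$) from the start, so it proves the two displayed equalities but cannot serve that auxiliary purpose; note also that your appeal to $(i)$ for positivity of $F$ carries the same caveat as the paper's own claim, since for non-reversible $F_0$ positivity of $F_0+\theta$ is literally equivalent to $F_0^*(-\theta)<1$, which coincides with $(i)$ only in the reversible case. In return, your computation yields more information: it exhibits the map $\Psi=\mathscr{L}_{F_0}^{-1}\circ(\mathscr{L}_F-\theta)$ of Theorem \ref{thmprojective} explicitly as the radial projection $v\mapsto v/F_0(v)$ on $\Sigma_FM$, and it is the tangent-side counterpart of the identity $\mathscr{L}_\mathfrak{m}=\mathscr{L}_{F_0}+\theta$ in (\ref{legendremagnetic}) that the paper only introduces afterwards.
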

\begin{proof}
Since $F_0^*(\theta)<1$, then $\theta+\Sigma_{F_0}^*M$ is the unit co-sphere bundle $\Sigma_{\hat{F}}^*M$ of some Finsler metric $\hat{F}$ on $M$. To see that
$\hat{F}=F$, let $v\in T_mM$ and compute:
\begin{eqnarray}
\hat{F}(v) & = & (\hat{F}^*)^*(v)\nonumber\\ &  = & {\rm sup}\Big\{\xi(v)\hskip 2pt : \hskip 2pt \xi\in\Sigma_{\hat{F}}^*M\cap T_mM=\theta_m+\Sigma_{F_0}^*M\cap T_m^*M\Big\}\nonumber\\
 & = &  \theta_m(v) + {\rm sup}\big\{\xi(v)\hskip 2pt : \hskip 2pt \xi\in \Sigma_{F_0}^*M\cap T_m^*M\big\}~=~\theta_m(v) + F_0(v).\nonumber
\end{eqnarray}
\end{proof}

If we introduce the {\it magnetic Hamiltonian} $H_\mathfrak{m}:T^*M\backslash 0\rightarrow\mathds{R}$,
\begin{equation}
H_\mathfrak{m}(\xi) = (1/2)(F_0^*)^2(\xi-\theta), 
\end{equation}
then (\ref{levelmagnetic}) says that the energy level $1/2$ of $H_\mathfrak{m}$ is
\begin{equation}
H_\mathfrak{m}^{-1}(1/2)=\Sigma_F^*M. 
\end{equation}
Let us follow the terminology in \cite[Chap. 3]{marsden}. The Hamiltonian 
$H_\mathfrak{m}$ corresponds to the Lagrangian function $L_\mathfrak{m}:TM\backslash 0\rightarrow\mathds{R}$,
$L_\mathfrak{m}(v)=(1/2)F_0(v)^2+\theta(v)$;
that is, the Legendre transformation $\mathscr{L}_\mathfrak{m}:TM\backslash 0\rightarrow T^*M\backslash 0$ of $L_\mathfrak{m}$, which one computes easily as
\begin{equation}\label{legendremagnetic}
\mathscr{L}_\mathfrak{m}(v)=\mathscr{L}_{F_0}(v)+\theta,
\end{equation}
is a diffeomorphism and $H_\mathfrak{m}\circ\mathscr{L}_\mathfrak{m}=E_\mathfrak{m}$, where the energy $E_\mathfrak{m}$ of
$L_\mathfrak{m}$ computes as $E_\mathfrak{m}=(1/2)(F_0)^2$. It follows that $\mathscr{L}_\mathfrak{m}$ restricts to a diffeomorphism
\begin{equation}\nonumber
\mathscr{L}_\mathfrak{m} : E_\mathfrak{m}^{-1}(1/2)=\Sigma_{F_0}M\longrightarrow H_\mathfrak{m}^{-1}(1/2)=\Sigma_F^*M 
\end{equation}
whose inverse, pre-composed with the diffeomorphism $\mathscr{L}_F:\Sigma_FM\rightarrow\Sigma^*_FM$, is the map (\ref{mappsi}):
\begin{equation}\label{mappsi2}
\Psi=\mathscr{L}_\mathfrak{m}^{-1}\circ\mathscr{L}_F:\Sigma_FM\rightarrow\Sigma_{F_0}M. 
\end{equation}

\begin{lemma}\label{lemmareparametrization}
If $\phi$ is the function in Theorem \ref{thmprojective}, then
\begin{equation}
\Psi_*S_F=\phi S_{F_0}.
\end{equation}
\end{lemma}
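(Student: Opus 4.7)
The plan is to work on $T^*M\setminus 0$, where both Hamiltonians $(1/2)(F^*)^2$ and $H_\mathfrak{m}$ have $\Sigma_F^*M$ as the regular level set $\{=1/2\}$. On such a common level set, their differentials are proportional, so the corresponding Hamiltonian vector fields $S_F^*$ and $X_\mathfrak{m}$ are proportional along $\Sigma_F^*M$. Once this proportionality factor is pinned down and we know how $\mathscr{L}_\mathfrak{m}$ transports $S_{F_0}$ to $X_\mathfrak{m}$, the desired identity $\Psi_*S_F=\phi\,S_{F_0}$ follows by unfolding $\Psi=\mathscr{L}_\mathfrak{m}^{-1}\circ\mathscr{L}_F$.

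The first ingredient I would establish is that $(\mathscr{L}_\mathfrak{m})_*S_{F_0}=X_\mathfrak{m}$. Since $\theta$ is closed, the Euler--Lagrange equations for $L_\mathfrak{m}(v)=(1/2)F_0(v)^2+\theta(v)$ coincide with those of $(1/2)F_0^2$ (the $\theta$-contribution collapses to the components of $d\theta$, which vanish), so the Euler--Lagrange vector field of $L_\mathfrak{m}$ on $TM\setminus 0$ is exactly $S_{F_0}$. Standard Lagrange--Hamilton duality through the Legendre transform $\mathscr{L}_\mathfrak{m}$ then identifies this Euler--Lagrange vector field with the Hamiltonian vector field $X_\mathfrak{m}$.

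The second ingredient is the proportionality constant $\mu$ in $X_\mathfrak{m}=\mu\, S_F^*$ on $\Sigma_F^*M$. I would determine $\mu$ by evaluating $dH_\mathfrak{m}$ and $d((1/2)(F^*)^2)$ on the Liouville vector field $C^*$, which is transverse to $\Sigma_F^*M$ by $1$-homogeneity of $F^*$. The denominator is $C^*((1/2)(F^*)^2)=(F^*)^2=1$ on $\Sigma_F^*M$ by $2$-homogeneity. For the numerator, at $\xi=\mathscr{L}_F(\mathbf{v})\in\Sigma_F^*M$, write $\xi=\mathscr{L}_{F_0}(\Psi(\mathbf{v}))+\theta$; the chain rule gives
\[
C^*(H_\mathfrak{m})(\xi)=d_f\bigl((1/2)(F_0^*)^2\bigr)(\xi-\theta)\cdot\xi.
\]
Since the fiber derivative of $(1/2)(F_0^*)^2$ at $\eta$ equals $\mathscr{L}_{F_0}^{-1}(\eta)$, this evaluates to $\xi(\Psi(\mathbf{v}))=1+\theta(\Psi(\mathbf{v}))=1/\phi(\Psi(\mathbf{v}))$. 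Hence $\mu=1/(\phi\circ\mathscr{L}_\mathfrak{m}^{-1})$, i.e.\ $S_F^*=(\phi\circ\mathscr{L}_\mathfrak{m}^{-1})\,X_\mathfrak{m}$ on $\Sigma_F^*M$.

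To conclude, I push the last identity forward by $\mathscr{L}_\mathfrak{m}^{-1}\colon\Sigma_F^*M\to\Sigma_{F_0}M$. For a scalar-multiplied vector field, $(\Phi_*(fY))(q)=f(\Phi^{-1}(q))\,\Phi_*Y(q)$, so the scalar $\phi\circ\mathscr{L}_\mathfrak{m}^{-1}$ precomposes with $\mathscr{L}_\mathfrak{m}$ and collapses to $\phi$; meanwhile $(\mathscr{L}_\mathfrak{m}^{-1})_*X_\mathfrak{m}=S_{F_0}$ by the first step. Combining with $(\mathscr{L}_F)_*S_F=S_F^*$ and $\Psi=\mathscr{L}_\mathfrak{m}^{-1}\circ\mathscr{L}_F$ gives $\Psi_*S_F=\phi\,S_{F_0}$. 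The main delicate point I expect is the vertical-derivative computation of $C^*(H_\mathfrak{m})$: recognizing $\Psi(\mathbf{v})$ as $\mathscr{L}_{F_0}^{-1}(\xi-\theta)$ and extracting the factor $1+\theta(\Psi(\mathbf{v}))=1/\phi$ is precisely where the function $\phi$ enters, and where bookkeeping between tangent, cotangent, and the two Legendre transforms has to be handled carefully.
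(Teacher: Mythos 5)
Your proposal is correct and follows essentially the same route as the paper: both arguments rest on the proportionality of the Hamiltonian fields of $(1/2)(F^*)^2$ and $H_\mathfrak{m}$ along their common level $\Sigma_F^*M$, the identification of the Euler--Lagrange field of $L_\mathfrak{m}$ with $S_{F_0}$ via ${\rm d}\theta=0$, and the Legendre correspondence $\mathscr{L}_\mathfrak{m}$. The only (minor) difference is how the factor $1+\theta(\mathbf{u})=1/\phi(\mathbf{u})$ is extracted: you evaluate $C^*(H_\mathfrak{m})/C^*\bigl((1/2)(F^*)^2\bigr)$ using Euler homogeneity and ${\rm d}_f\bigl((1/2)(F_0^*)^2\bigr)=\mathscr{L}_{F_0}^{-1}$, while the paper computes $\lambda=1/\alpha(X_{H_\mathfrak{m}})$ by pulling $\alpha$ back with $\mathscr{L}_\mathfrak{m}$ and using the SODE property; since $\alpha=-i_{C^*}\omega$, these are two evaluations of the same scalar.
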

\begin{proof}
Let $X_{H_\mathfrak{m}}$ be the restriction to $H_\mathfrak{m}^{-1}(1/2)$ of the Hamiltonian vector field of $H_\mathfrak{m}$. Since the Hamiltonians
$H_\mathfrak{m}$ and $(1/2)(F^*)^2$ have the same energy level $1/2$, it follows easily that, on that level, their Hamiltonian vector fields must differ by
a multiplicative function $\lambda:\Sigma_F^*M\rightarrow\mathds{R}$, 
\begin{equation}\label{lambda}
S_F^*=\lambda X_{H_\mathfrak{m}}.
\end{equation}
On the other hand, $S_F^*$ is $\mathscr{L}_F$-related to $S_F$ and, letting $X_{E_\mathfrak{m}}$ be the restriction to $E_\mathfrak{m}^{-1}(1/2)=\Sigma_{F_0}M$ of
the Euler-Lagrange vector field of $L_\mathfrak{m}$, $X_{H_\mathfrak{m}}$ is $\mathscr{L}_\mathfrak{m}$-related to $X_{E_\mathfrak{m}}$. Therefore,
$\Psi_*S_F = (\lambda\circ\mathscr{L}_\mathfrak{m}) X_{E_\mathfrak{m}}$. It remains to show that $X_{E_\mathfrak{m}}=S_{F_0}$ and 
$\lambda\circ\mathscr{L}_\mathfrak{m}=\phi$. The former is a consequence of the closedness of $\theta$ since $L_\mathfrak{m}$ differs from $(1/2)(F_0)^2$ by 
$\theta$ (cf. \cite[Prop. 3.5.18]{marsden}). As for the latter, applying the canonical 1-form $\alpha$ to (\ref{lambda}), and recalling that $\alpha(S_F^*)=1$,
then 
\begin{equation}\label{lambda2}
\lambda=1/\alpha(X_{H_\mathfrak{m}}).
\end{equation}
On the other hand, since $X_{H_\mathfrak{m}}$ and $X_{E_\mathfrak{m}}$ are $\mathscr{L}_\mathfrak{m}|_{\Sigma_{F_0}M}$-related,
and
\begin{equation}\label{pullbackmagnetic}
{\mathscr{L}_\mathfrak{m}}^*\alpha={\mathscr{L}_{F_0}}^*\alpha+\pi^*\theta
\end{equation}
as follows from (\ref{legendremagnetic}), we have for $v\in\Sigma_{F_0}M$,
\begin{eqnarray}
\alpha_{\mathscr{L}_\mathfrak{m}(v)}(X_{H_\mathfrak{m}}) & = & ({\mathscr{L}_\mathfrak{m}}^*\alpha)_v(X_{E_\mathfrak{m}}) = 
({\mathscr{L}_{F_0}}^*\alpha)_v(X_{E_\mathfrak{m}})+ (\pi^*\theta)_v(X_{E_\mathfrak{m}})\nonumber\\
& = & g_{F_0}(v)\bigl(v\hskip 1pt, \hskip 1pt{\rm d}\pi(v)X_{E_\mathfrak{m}}\bigr) + \theta\bigl({\rm d}\pi(v)X_{E_\mathfrak{m}}\bigr).\nonumber
\end{eqnarray}
Using now that $X_{E_\mathfrak{m}}$ is a SODE, the above expression is $g_{F_0}(v)(v,v)+\theta(v)=1+\theta(v)$ and the equality
$\lambda\circ\mathscr{L}_\mathfrak{m}|_{\Sigma_{F_0}M}=\phi$ follows now from (\ref{lambda2}).
\end{proof}

\subsection{Proof of Theorem \ref{thmprojective}}

Consider, as in $\S$\ref{sectionjacobicurvesfinsler}, the Jacobi curves
\begin{equation}\nonumber
\ell_{\bf v}^{\rm c}(t)\in\Lambda\bigl({\rm ker}(\alpha_F)_{\bf v},\omega_F\bigr)~,~~ \ell_{\bf u}^{\rm c}(t)\in\Lambda\bigl({\rm ker}(\alpha_{F_0})_{\bf u},\omega_{F_0}\bigr), 
\end{equation}
based at {\bf v} and {\bf u}, associated to $F$ and $F_0$, respectively.  
We shall break up the proof in several simple steps.
\vskip 5pt

{\bf I.} The map $\Psi:(\Sigma_FM,\alpha_F)\rightarrow(\Sigma_{F_0}M,\Psi_*\alpha_F)$ is a fiber-preserving exact contact diffeomorphism and, 
by Lemma \ref{lemmareparametrization}, $\Psi_*S_F=\phi S_{F_0}$. Moreover, ${\rm d}(\Psi_*\alpha_F)=\omega_{F_0}$; for, 
it follows successively from the definition (\ref{mappsi2}) of $\Psi$, the definitions of $\alpha_F$, $\alpha_{F_0}$, and (\ref{pullbackmagnetic}) that
$\Psi_*\alpha_F={\mathscr{L}_\mathfrak{m}}^*\bigl((\mathscr{L}_F)_*\alpha_F\bigr)={\mathscr{L}_\mathfrak{m}}^*\alpha=\alpha_{F_0}+\pi^*\theta$, hence
${\rm d}(\Psi_*\alpha_F)={\rm d}\hskip 1pt \alpha_{F_0}+\pi^*{\rm d}\hskip 1pt \theta={\rm d}\hskip 1pt \alpha_{F_0}$.
Therefore, the derivative ${\rm d}\Psi({\bf v})$ restricts to a symplectic isomorphism
\begin{equation}\label{derivativepsi}
{\rm d}\Psi({\bf v}):\bigl({\rm ker}(\alpha_F)_{\bf v},\omega_F\bigr)\rightarrow\bigl({\rm ker}(\Psi_*\alpha_F)_{\bf u},\omega_{F_0}\bigr) 
\end{equation}
that maps $\ell_{\bf v}^{\rm c}(t)$ to the Jacobi curve $\hat{\ell}_{\bf u}^{\rm c}(t)\in\Lambda\bigl({\rm ker}(\Psi_*\alpha_F)_{\bf u},\omega_{F_0}\bigr)$ of the 
moving plane $\bigl({\rm ker}\hskip 1pt \Psi_*\alpha_F,\mathcal{V}\Sigma_{F_0}M,\Phi_t^{\phi S_{F_0}}\bigr)$ defined on the exact contact manifold 
$(\Sigma_{F_0}M,\Psi_*\alpha_F)$.

\vskip 5pt

{\bf II.} The flow $\Phi_t^{\phi S_{F_0}}$ is a {\it reparametrization} of $\Phi_t^{S_{F_0}}$; more precisely, if $\eta_u(t)=\eta(t,u)$ denotes
the solution, defined for $(t,u)$ on some neighborhood of $\{0\}\times\Sigma_{F_0}M$, to
\begin{equation}\label{equationeta}
\frac{\partial \eta}{\partial t}(t,u)=\phi\bigl(\Phi_{\eta(t,u)}^{S_{F_0}}(u)\bigr)~,~~\eta(0,u)=0, 
\end{equation}
then 
\begin{equation}\label{eq2}
\Phi_t^{\phi S_{F_0}}(u)=\Phi_{\eta(t,u)}^{S_{F_0}}(u).
\end{equation}
It follows from this and from a straightforward computation that
the derivative of $\Phi_{-t}^{\phi S_{F_0}}$ at $\Phi_t^{\phi S_{F_0}}({\bf u})$ takes the form
\begin{equation}\label{derivativesflows}
{\rm d}\hskip 1pt \Phi_{-t}^{\phi S_{F_0}}(\Phi_t^{\phi S_{F_0}}({\bf u})) = {\rm d}\hskip 1pt \Phi_{-\eta(t,{\bf u})}^{S_{F_0}}(\Phi_{\eta(t,{\bf u})}^{S_{F_0}}({\bf u}))
+\zeta\otimes (S_{F_0})_{\bf u}
\end{equation}
for some $\zeta\in T_{\bf u}^*(\Sigma_{F_0}M)$.

\vskip 5pt

{\bf III.} Let ${\rm pr}_{\bf u}:T_{\bf u}\Sigma_{F_0}M\rightarrow{\rm ker}(\alpha_{F_0})_{\bf u}$ be the projection map with 
kernel generated by $(S_{F_0})_{\bf u}$.
Since one also has 
$T_{\bf u}\Sigma_{F_0}M={\rm ker}(\Psi_*\alpha_F)_{\bf u}\oplus{\rm span}[(S_F)_{\bf u}]$ and $S_{F_0}$ generates the kernel of $\omega_{F_0}$, then
${\rm pr}_{\bf u}$ restricts to a symplectic isomorphism
\begin{equation}\label{pru}
{\rm pr}_{\bf u}:\bigl({\rm ker}(\Psi_*\alpha_F)_{\bf u},\omega_{F_0}\bigr)\rightarrow\bigl({\rm ker}(\alpha_{F_0})_{\bf u},\omega_{F_0}\bigr). 
\end{equation}
Recalling the definitions of $\ell_{\bf u}^{\rm c}(t)$ and $\hat{\ell}_{\bf u}^{\rm c}(t)$, it follows from (\ref{derivativesflows}) that 
${\rm pr}_{\bf u}(\hat{\ell}_{\bf u}^{\rm c}(t))=\ell_{\bf u}^{\rm c}(\eta_{\bf u}(t))$. Therefore, the composition of (\ref{derivativepsi}) with
(\ref{pru}),  
\begin{equation}\nonumber
{\bf T}={\rm pr}_{\bf u}\circ {\rm d}\Psi({\bf v}):\bigl({\rm ker}(\alpha_F)_{\bf v},\omega_F\bigr)\rightarrow\bigl({\rm ker}(\alpha_{F_0})_{\bf u},\omega_{F_0}\bigr),
\end{equation}
is a symplectic isomorphism such that
\begin{equation}\label{relationjacobicurves}
{\bf T}\ell_{\bf v}^{\rm c}(t) = \ell_{\bf u}^{\rm c}(\eta_{\bf u}(t)). 
\end{equation}

{\bf IV.} Observe that ${\rm pr}_{\bf u}$ is the identity on $\mathcal{V}_{\bf u}\Sigma_{F_0}M$, so ${\bf T}{\bf w}=\widetilde{\bf w}$. Applying 
Proposition \ref{transformationproperties} to
(\ref{relationjacobicurves}) one obtains
\begin{equation}
\frac{W_{\bf v}^{\rm c}(0)\bigl({\bf K}_{\bf v}^{\rm c}(0){\bf w},{\bf w}\bigr)}{W_{\bf v}^{\rm c}(0)({\bf w},{\bf w})} 
 =  \dot{\eta}_{\bf u}(0)^2 \frac{W_{\bf u}^{\rm c}(0)\bigl({\bf K}_{\bf u}^{\rm c}(0)\widetilde{\bf w},\widetilde{\bf w}\bigr)}{W_{\bf u}^{\rm c}(\widetilde{\bf w},\widetilde{\bf w})}
+\frac{1}{2}\{\eta_{\bf u}(t),t\}|_{t=0}
\end{equation}
and therefore
\begin{equation}\nonumber
K_F({\bf v},\Pi) =  \dot{\eta}_{\bf u}(0)^2 K_{F_0}({\bf u},\widetilde{\Pi})+\frac{1}{2}\{\eta_{\bf u}(t),t\}|_{t=0}.
\end{equation}
It remains to compute $\dot{\eta}_{\bf u}(0)$ and $\{\eta_{\bf u}(t),t\}|_{t=0}$. From (\ref{equationeta}) and (\ref{eq2}) one has
$\dot{\eta}_{\bf u}(t)=\phi(\Phi_t^{\phi S_{F_0}}({\bf u}))$. Hence,
\begin{itemize}
\item[$(i)$] $\dot{\eta}_{\bf u}(0)=\phi(\bf u)$
\item[$(ii)$] $\ddot{\eta}_{\bf u}(0)=\phi S_{F_0}(\phi)|_{\bf u}$
\item[$(iii)$] $\dddot{\eta}_{\bf u}(0)=\phi S_{F_0}\bigl(\phi S_{F_0}(\phi)\bigr)|_{\bf u}=\phi S_{F_0}(\phi)^2|_{\bf u}+\phi^2 S_{F_0}\bigl(S_{F_0}(\phi)\bigr)|_{\bf u}$
\end{itemize}
Therefore $\{\eta_{\bf u}(t),t\}|_{t=0}=\phi S_{F_0}\bigl(S_{F_0}(\phi)\bigr)|_{\bf u}-(1/2)S_{F_0}(\phi)^2|_{\bf u}$ and (\ref{curvaturesprojectivechange}) follows.

\section{The flag curvature of Katok perturbations}

Let $(M,F)$ be a Finsler manifold and $V$ a vector field on $M$ such that $F(V_m)<1$ for all $m\in M$. Regarding $V$ as a function
\begin{equation}\label{hamiltonianV}
V : T^*M \rightarrow \mathds{R},~~~V(\xi)=\xi(V_{\tau(\xi)}), 
\end{equation}
there exists a unique Finsler metric $\widehat{F}$ on $M$ whose dual $\widehat{F}^*$ is given by
\begin{equation}\nonumber
\widehat{F}^* = F^* + V.
\end{equation}

\begin{definition}
In the case where $V$ is a Killing vector field for $F$, that is, its flow $\Phi_t^V$ satisfies $(\Phi_t^V)^*F=F$ for all $t$, we shall call
$\widehat{F}$ the {\it Katok perturbation} of $F$ by $V$. 
\end{definition}

Although the computations of the flag curvature in the more general cases of perturbations by {\it homothetic} vector fields and even for  
{\it conformal} vector fields have been done (\cite{henrique3} and \cite{HuangMo}, resp.), a proof via fanning curves of the theorem below
is particularly simple and elegant and shall, thus, be presented here.
\begin{theorem}[Foulon \cite{FR}]\label{theoremkatok}
Let $\widehat{F}$ be a Katok perturbation of $F$. If $K_F\equiv 1$, then $K_{\widehat{F}}\equiv 1$.  
\end{theorem}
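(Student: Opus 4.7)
The plan is to show, for every $\xi \in \Sigma^*_{\widehat{F}}M$, that the Jacobi curve $\ell^{\widehat{F}}_\xi(t)$ of $\widehat F$ (in the contact setting of $\S$\ref{sectionjacobicurvesfinsler}) is the image of the Jacobi curve $\ell^{F}_\eta(t)$ of $F$ at a related point $\eta \in \Sigma_F^*M$ under a linear symplectic isomorphism composed with an affine time reparametrization. By Proposition \ref{transformationproperties} the Jacobi endomorphisms and Wronskians then correspond in a way that preserves the ratio $W({\bf K}a,a)/W(a,a)$ (the Schwarzian of an affine reparametrization vanishes), and by Theorem \ref{theoremjacobiflag} the flag curvatures of $\widehat F$ at $\xi$ and of $F$ at $\eta$ coincide at corresponding $2$-planes. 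The assumption $K_F\equiv 1$ then yields $K_{\widehat F}\equiv 1$.

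The first ingredient is a Hamiltonian encoding of the Killing condition. Writing $H_V(\xi)=\xi(V_{\tau(\xi)})$, so that $\tilde V := X_{H_V}$ is the cotangent lift of $V$, the identity $(\Phi^V_t)^*F=F$ is equivalent to $\{H_V,F^*\}=0$. Thus $X_{F^*}$ and $\tilde V$ commute on $T^*M$ and each preserves $\widehat F^*=F^*+H_V$. On $\Sigma^*_{\widehat F}M$ the Reeb field decomposes accordingly as
\[
S_{\widehat F}^*=X_{\widehat F^*}=X_{F^*}+\tilde V,\qquad \Phi_t^{S_{\widehat F}^*}\big|_{\Sigma^*_{\widehat F}M}=\Phi_t^{X_{F^*}}\circ\tilde\Phi_t^V.
\]
Since $\tilde\Phi^V_t$ is a fiber-preserving symplectomorphism that preserves $\Sigma^*_{\widehat F}M$, its differential acts as a symplectic automorphism of the vertical distribution; feeding this into the definition of $\ell^{\widehat F}_\xi(t)$ absorbs the $\tilde\Phi^V$-factor into an inner identification and gives
\[
\ell^{\widehat F}_\xi(t)=d\Phi_{-t}^{X_{F^*}}\bigl(\Phi_t^{X_{F^*}}(\xi)\bigr)\,\mathcal V_{\Phi_t^{X_{F^*}}(\xi)}\Sigma^*_{\widehat F}M .
\]

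The second ingredient replaces $X_{F^*}$ by $S_F^*$. Since $F^*$ is conserved by both $X_{F^*}$ and $\tilde V$, and $S_F^*=F^*X_{F^*}$, one has $\Phi_t^{X_{F^*}}(\xi)=\Phi^{S_F^*}_{t/c}(\xi)$ with $c:=F^*(\xi)=1-H_V(\xi)$. Together with the homogeneity $\Phi^{S_F^*}_t(\lambda\eta)=\lambda\Phi^{S_F^*}_{\lambda t}(\eta)$ derived from $[C^*,S_F^*]=S_F^*$, the conformal symplectomorphism $\mu_c(\eta)=c\eta$ (satisfying $\mu_c^*\omega=c\omega$ and preserving the vertical distribution of $T^*M$) carries the orbit of $\eta=\xi/c\in\Sigma_F^*M$ under $\Phi^{S_F^*}$ onto the orbit of $\xi$ under $\Phi^{X_{F^*}}$, intertwining the two flows up to a constant time-rescaling. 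Combining this with Proposition \ref{propcontactrelation} to pass between the symplectic and contact settings delivers the desired linear symplectic isomorphism $T:(\ker\alpha_\xi,\omega_\xi)\to(\ker\alpha_\eta,\omega_\eta)$ with $T\,\ell^{\widehat F}_\xi(t)=\ell^F_\eta(s(t))$ for $s$ affine. Applying Proposition \ref{transformationproperties} and then Theorem \ref{theoremjacobiflag} at the unit vectors $v=\mathscr L_{\widehat F}^{-1}(\xi)$ and $w=\mathscr L_F^{-1}(\eta)$ (so the $F(v)^{-2}$ factor is $1$ on both sides) completes the argument.

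The main obstacle is the careful bookkeeping of the vertical distributions. Because $\xi$ and $\eta=\xi/c$ sit on the \emph{distinct} hypersurfaces $\Sigma^*_{\widehat F}M$ and $\Sigma_F^*M$ of $T^*M$, the subspaces $\mathcal V_\xi\Sigma^*_{\widehat F}M$ and $\mathcal V_\eta\Sigma_F^*M$ of $T^*_m M$ differ (precisely by the fiber-derivative $d_fH_V=V_m$) and are not matched by the scaling $d\mu_c$ alone. One must weave together the symplectic actions of $d\tilde\Phi^V$ (which is what absorbs that discrepancy) and of $d\mu_c$ with the symplectic reduction in Proposition \ref{propcontactrelation} into a single linear symplectic transformation together with a single \emph{affine} reparametrization; verifying that the residual time change is affine—hence Schwarzian-free—is the crux of the argument and is what guarantees that constant flag curvature is transferred from $F$ to $\widehat F$.
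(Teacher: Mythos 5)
Your overall strategy---transfer the Jacobi curve of $\widehat F$ to that of $F$ by a linear isomorphism built from the Killing symmetry and then invoke Proposition \ref{transformationproperties}---is the same circle of ideas as the paper's proof, and your first two steps are sound: the decomposition $S^*_{\widehat F}=X_{F^*}+X_{H_V}$ on $\Sigma^*_{\widehat F}M$ with commuting flows, and the absorption of the fiber-preserving factor $\tilde\Phi^V_t$ giving $\ell^{\widehat F}_\xi(t)={\rm d}\Phi^{X_{F^*}}_{-t}\,\mathcal V_{\Phi^{X_{F^*}}_t(\xi)}\Sigma^*_{\widehat F}M$. The genuine gap is exactly the step you defer as ``the crux'': producing the single linear map ${\bf T}$ that matches the two Legendrian distributions. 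The mechanism you invoke for this---that ${\rm d}\tilde\Phi^V$ ``absorbs'' the discrepancy between $\mathcal V_\xi\Sigma^*_{\widehat F}M$ and $\mathcal V_\eta\Sigma^*_FM$---cannot work: $\tilde\Phi^V_t$ preserves $\widehat F^*$, hence carries verticals of $\Sigma^*_{\widehat F}M$ to verticals of $\Sigma^*_{\widehat F}M$, never to those of $\Sigma^*_FM$; moreover, in your own reduction the $\tilde\Phi^V$-factor has already been used up, and what remains to be compared is ${\rm d}\mu_{1/c}\,\mathcal V_\xi\Sigma^*_{\widehat F}M=\ker\bigl(d_fF^*(\eta)+V_m\bigr)$ against $\mathcal V_\eta\Sigma^*_FM=\ker d_fF^*(\eta)$ inside $\mathcal V_\eta T^*M$. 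These hyperplanes differ by a shear in the radial direction $C^*_\eta$, and neither the constant dilation $\mu_c$ nor Proposition \ref{propcontactrelation} supplies that correction by itself. The device you are missing is the fiberwise normalization $\Psi(\xi)=\xi/F^*(\xi)$ of (\ref{diffeomorphism7}): its derivative contains precisely the rank-one radial term that carries $\mathcal V_\xi\Sigma^*_{\widehat F}M$ onto $\mathcal V_\eta\Sigma^*_FM$, and the computation $\Psi_*S^*_{\widehat F}=S^*_F+X$ (Lemma \ref{lemma456}) then yields ${\bf T}\,\ell^{\rm c}_\xi(t)=\ell^{\rm c}_\eta(t)$ with \emph{no} reparametrization, after which Proposition \ref{transformationproperties} and Theorem \ref{theoremjacobiflag} finish the argument.

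A second, logical, problem: ``affine, hence Schwarzian-free'' is not enough for this theorem. If you only obtained ${\bf T}\,\ell^{\rm c}_\xi(t)=\ell^{\rm c}_\eta(s(t))$ with $s(t)=t/c$ and $c=F^*(\xi)=1-H_V(\xi)$, then by Proposition \ref{transformationproperties}(2) the Jacobi endomorphism is multiplied by $\dot s^2=1/c^2$, so $K_F\equiv 1$ would only give $K_{\widehat F}=1/c^2$ along the corresponding geodesic---constant along each geodesic but varying with $\xi$, hence not $\equiv 1$. You need $\dot s\equiv 1$, not merely affineness. In fact the rescalings cancel: by the homogeneity you quote, $\Phi^{X_{F^*}}_t\circ\mu_c=\mu_c\circ\Phi^{S^*_F}_t$ on $\Sigma^*_FM$, so there is no residual time change at all; but your write-up asserts a leftover constant rescaling and then declares it harmless, which is precisely where the argument, as written, would fail to deliver $K_{\widehat F}\equiv 1$.
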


\subsection{Proof of Theorem \ref{theoremkatok}}

We shall denote by $\alpha_F$ and $\alpha_{\widehat{F}}$ the contact 1-forms on $\Sigma_F^*M$ and $\Sigma_{\widehat{F}}^*M$, respectively, and 
let $\omega_F=-{\rm d}\alpha_F$ and $\omega_{\widehat{F}}=-{\rm d}\alpha_{\widehat{F}}$. 
Let
$X$ be the Hamiltonian vector field of (\ref{hamiltonianV}). As pointed out in \cite{Ziller}, the Hamiltonian flow $\Phi_t^X$ is pulling-back by $\Phi_t^V$,
\begin{equation}\label{flowX}
\Phi_t^X = \bigl( {\rm d}\Phi_t^V \bigr)^*:T^*M\rightarrow T^*M.
\end{equation}
Since $V$ is a Killing vector field of $F$, it follows that $\Sigma_F^*M$ is invariant by $\Phi_t^X$ and, hence, $X$ is tangent to $\Sigma_F^*M$.
Also, since $F^*$ is constant on the orbits of $X$, we have the commutation of the flows $\Phi_t^{S_F^*}$ and $\Phi_t^X$,
\begin{equation}\label{commutation}
[S_F^*,X] = 0. 
\end{equation}
We shall still denote by $X$ and $V$ the restrictions of $X$ and (\ref{hamiltonianV}) to $\Sigma_F^*M$.
\par Consider the diffeomorphism
\begin{equation}\label{diffeomorphism7}
 \Psi : \Sigma_{\widehat{F}}^*M \rightarrow \Sigma_F^*M,~~~\Psi(\xi)=\frac{1}{F^*(\xi)}\xi.
\end{equation}

From the definitions, one easily computes
\begin{equation}\label{eq444}
\Psi_*\alpha_{\widehat{F}} = \frac{1}{\widehat{F}^*}\alpha_F.  
\end{equation}

\begin{lemma}\label{lemma456}
We have that $\Psi_*S_{\widehat{F}}^* = S_F^*+X$. 
\end{lemma}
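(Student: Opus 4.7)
The plan is to exploit the contact/Reeb description of the co-geodesic vector field. Since $S^*_{\widehat F}$ is the Reeb field of $(\Sigma^*_{\widehat F}M,\alpha_{\widehat F})$, and $\Psi$ is a diffeomorphism, the pushforward $\Psi_*S^*_{\widehat F}$ is automatically the Reeb field of the pushed-forward form $\Psi_*\alpha_{\widehat F}$. By the already-established identity (\ref{eq444}), this form equals $\beta:=\alpha_F/\widehat F^{*}$, where on $\Sigma^*_FM$ we identify $\widehat F^{*}=F^{*}+V=1+V$. It will then suffice to verify the two Reeb conditions
\[
\beta(S^*_F+X)=1, \qquad i_{S^*_F+X}\,d\beta=0
\]
on $\Sigma^*_FM$ and invoke uniqueness of the Reeb field.

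The two key ingredients I will use are the standard identities for cotangent lifts: $\alpha(X)=V$ (verified in a local Darboux chart with $\alpha=\sum p_i\,dx^i$ and $V(x,p)=\sum V^i(x)p_i$), and $i_X\omega=dV$ (which is just the defining property of $X=X_V$, consistent with the paper's convention $\omega(X_H,\cdot)=dH$). Combined with the Reeb identities $\alpha_F(S^*_F)=1$ and $i_{S^*_F}\omega_F=0$ from the Proposition of $\S$\ref{ssectionSpraysFinsler}, the first condition is immediate:
\[
\beta(S^*_F+X)=\frac{\alpha_F(S^*_F)+\alpha_F(X)}{1+V}=\frac{1+V}{1+V}=1.
\]

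For the second condition, I will expand
\[
d\beta=-(1+V)^{-2}\,dV\wedge\alpha_F-(1+V)^{-1}\omega_F,
\]
contract with $R:=S^*_F+X$, and use (i) $i_{S^*_F}\omega_F=0$, (ii) $i_X\omega_F=dV$ on $\Sigma^*_FM$, and (iii) the crucial cancellation $dV(R)=0$. The latter breaks up as $X(V)=\{V,V\}=0$ and $S^*_F(V)=\{\tfrac12(F^*)^2,V\}=0$, the second being equivalent to the commutation (\ref{commutation}) (or, more directly, to the Killing assumption on $V$, since $F^*$ is constant along the flow of $X$). The cross-term $i_R(dV\wedge\alpha_F)=dV(R)\alpha_F-\alpha_F(R)\,dV=-(1+V)\,dV$ then cancels against $-(1+V)^{-1}i_R\omega_F=-(1+V)^{-1}dV$, giving $i_Rd\beta=0$.

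The only real obstacle will be keeping track of sign conventions (pushforward vs.~pullback, the sign in $\omega=-d\alpha$, the Hamiltonian sign) so that the two terms in $i_Rd\beta$ cancel rather than reinforce; once the identities $\alpha(X)=V$, $i_X\omega=dV$, and $[S^*_F,X]=0\Rightarrow S^*_F(V)=0$ are in place, everything else is mechanical.
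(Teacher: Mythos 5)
Your proposal is correct and follows essentially the same route as the paper: verify that $S_F^*+X$ satisfies the two Reeb conditions for $\Psi_*\alpha_{\widehat F}=(1/\widehat F^*)\alpha_F$, using $\alpha_F(X)=V$, $i_X\omega_F={\rm d}V$, $i_{S_F^*}\omega_F=0$, and $S_F^*(V)=X(V)=0$, then invoke uniqueness of the Reeb field. The sign bookkeeping in your contraction of ${\rm d}\beta$ checks out, so nothing is missing.
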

\begin{proof}
All we have to show is that 
\begin{equation}\label{eq576}
i_{S_F^*+X}{\rm d}(\Psi_*\alpha_{\widehat{F}}) = 0~~,~~(\Psi_*\alpha_F)(S_F^*+X) = 1.  
\end{equation}
Observe that $\alpha_F(X)=V$ since (\ref{flowX}) implies that $X$ is $\tau$-related to the vector field $V$. Thus, since $\alpha_F(S_F^*)=1$ and, as functions on 
$\Sigma_F^*M$, $\widehat{F}^*=F^*+V=1+V$, the second equality in (\ref{eq576})
follows from (\ref{eq444}). By taking derivatives in (\ref{eq444}), and using that $i_{S_F^*}\omega_F=0$, $i_X\omega_F={\rm d}V$, and $\alpha_F(S_F^*+X)=\widehat{F}^*$, we obtain
sucessively,
\begin{eqnarray}
 i_{S_F^*+X}{\rm d}(\Psi_*\alpha_{\widehat{F}}) & = & (1/\widehat{F}^*)^2i_{S_F^*+X}({\rm d}V\wedge\alpha_F) + (1/\widehat{F}^*)i_{S_F^*+X}\omega_F\nonumber\\
 & = & (1/\widehat{F}^*)^2\bigl((S_F^*+X)(V)\alpha_F - \alpha_F(S_F^*+X){\rm d}V\bigr) + (1/\widehat{F}^*)i_X\omega_F \nonumber\\
 & = & (1/\widehat{F}^*)^2\bigl(S_F^*(V)\alpha_F - \widehat{F}^*{\rm d}V \bigr)  + (1/\widehat{F}^*){\rm d}V.\nonumber
\end{eqnarray}
On the other hand, the commutativity of the flows $\Phi_t^{S_F^*}$ and $\Phi_t^X$ gives us $S_F^*(V)=0$. The result follows.
\end{proof}

The lemma above and (\ref{commutation}) imply, respectively, 
\begin{equation}\nonumber
\Psi\circ \Phi_t^{S_{\widehat{F}}^*} = \Phi_t^{S_F^*+X}\circ\Psi = \bigl(\Phi_t^X\circ\Phi^{S_F^*}_t\bigr)\circ \Psi .
\end{equation}
On the other hand, $\Psi$ is fiber-preserving, and the same is true of $\Phi_t^X$ since it is $\tau$-related to a flow on $M$. 
Therefore, if $\ell_\xi^{\rm c}(t)\in\Lambda({\rm ker}(\alpha_{\widehat{F}})_\xi)$ and 
$\ell_\eta^{\rm c}(t)\in\Lambda({\rm ker}(\alpha_F)_\eta)$ denote, as in $\S$\ref{sectionjacobicurvesfinsler}, the Jacobi curves associated to $\widehat{F}$ and $F$, respectively, based
at $\xi\in\Sigma_{\widehat{F}}^*M$ and $\eta=\Psi(\xi)$, we have shown

\begin{proposition}
${\rm d}\Psi(\xi)$ restricts to an isomorphism 
${\bf T}:{\rm ker}(\alpha_{\widehat{F}})_\xi\rightarrow{\rm ker}(\alpha_F)_\eta$ such that 
\begin{equation}\label{eq678}
{\bf T}\ell_\xi^{\rm c}(t)=\ell_\eta^{\rm c}(t).
\end{equation}
\end{proposition}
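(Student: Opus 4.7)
The plan is to establish the proposition in two steps: first confirm that $\mathrm{d}\Psi(\xi)$ genuinely sends $\mathrm{ker}(\alpha_{\widehat{F}})_\xi$ onto $\mathrm{ker}(\alpha_F)_\eta$, and then show that $\ell_\xi^{\rm c}(t)$ matches $\ell_\eta^{\rm c}(t)$ under the resulting isomorphism ${\bf T}$ for every $t$.

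The first step follows immediately from \eqref{eq444}, which asserts $\Psi_*\alpha_{\widehat{F}} = (1/\widehat{F}^*)\alpha_F$: since the conformal factor $1/\widehat{F}^*$ is nowhere zero, the two contact kernels correspond, so ${\bf T}$ is well-defined as claimed. For the second step, the engine is the flow identity $\Psi\circ\Phi_t^{S_{\widehat{F}}^*}=\Phi_t^X\circ\Phi_t^{S_F^*}\circ\Psi$ furnished by Lemma \ref{lemma456} together with the commutation \eqref{commutation}. Differentiating at $\xi$ and feeding the result into the definition
\[
\ell_\xi^{\rm c}(t)=\mathrm{d}\Phi_{-t}^{S_{\widehat{F}}^*}\!\bigl(\Phi_t^{S_{\widehat{F}}^*}(\xi)\bigr)\bigl(\mathcal{V}_{\Phi_t^{S_{\widehat{F}}^*}(\xi)}\Sigma_{\widehat{F}}^*M\bigr),
\]
I would rewrite ${\bf T}\ell_\xi^{\rm c}(t)$ as $\mathrm{d}\Phi_{-t}^{S_F^*}\circ\mathrm{d}\Phi_{-t}^X$ applied to $\mathrm{d}\Psi\!\bigl(\Phi_t^{S_{\widehat{F}}^*}(\xi)\bigr)\bigl(\mathcal{V}_{\Phi_t^{S_{\widehat{F}}^*}(\xi)}\Sigma_{\widehat{F}}^*M\bigr)$.

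The pivotal observation---where the proposition really earns its keep---is that both $\Psi$ (being the fiberwise rescaling $\xi\mapsto\xi/F^*(\xi)$) and $\Phi_t^X=(\mathrm{d}\Phi_t^V)^*$ of \eqref{flowX} are fiber-preserving over $\tau:T^*M\to M$. Therefore their derivatives carry vertical subspaces to vertical subspaces, which collapses the previous expression to $\mathrm{d}\Phi_{-t}^{S_F^*}\!\bigl(\Phi_t^{S_F^*}(\eta)\bigr)\bigl(\mathcal{V}_{\Phi_t^{S_F^*}(\eta)}\Sigma_F^*M\bigr)=\ell_\eta^{\rm c}(t)$, as desired. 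I do not foresee any genuine obstacle here: once the fiber-preserving character of $\Psi$ and $\Phi_t^X$ is noted, everything reduces to chain-rule bookkeeping made possible by the commutativity $[S_F^*,X]=0$. The only point requiring mild care is checking that $\Psi$ is fiber-preserving at the level of the unit co-sphere bundles---but this is transparent from formula \eqref{diffeomorphism7}.
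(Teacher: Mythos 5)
Your proof is correct and follows essentially the same route as the paper: the flow identity $\Psi\circ\Phi_t^{S_{\widehat{F}}^*}=\Phi_t^X\circ\Phi_t^{S_F^*}\circ\Psi$ from Lemma \ref{lemma456} and \eqref{commutation}, combined with the fiber-preserving character of $\Psi$ and $\Phi_t^X$, which is exactly the paper's argument. Your explicit verification via \eqref{eq444} that ${\rm d}\Psi(\xi)$ carries $\ker(\alpha_{\widehat{F}})_\xi$ onto $\ker(\alpha_F)_\eta$ is a welcome detail that the paper leaves implicit.
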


\begin{proof}[Proof of Theorem \ref{theoremkatok}]
The hypothesis $K_F\equiv 1$ means that ${\bf K}_\eta^{\rm c}(t)\equiv{\bf Id}$ for all $\eta\in\Sigma_F^*M$. Applying Proposition \ref{transformationproperties} to (\ref{eq678}),
we obtain ${\bf K}_\xi^{\rm c}(t)\equiv{\bf Id}$ for all $\xi\in\Sigma_{\widehat{F}}^*M$ and, therefore, $K_{\widehat{F}}\equiv 1$.
\end{proof}

\end{document}